\crefname{equation}{}{}
\newcommand{\crefName}[1]{\@nameuse{cref@#1@name}}
\newcommand{\crefNamePlural}[1]{\@nameuse{cref@#1@name@plural}}
\newcommand{\CrefName}[1]{\@nameuse{Cref@#1@name}}
\newcommand{\CrefNamePlural}[1]{\@nameuse{Cref@#1@name@plural}}
\begin{document}

\PrintTitleAndAbstract

\section{Introduction}

The Ellipsoid Method is a classical algorithm in Convex Optimization. It was
proposed in~1976 by Yudin and Nemirovski~\cite{YudinNemirovski1976Book} as the
modified method of centered cross-sections and then independently rediscovered a
year later by Shor~\cite{Shor1977CutOffMethod} in the form of the subgradient
method with space dilation. However, the popularity came to the Ellipsoid Method
only when Khachiyan used it in~1979 for proving his famous result on polynomial
solvability of Linear Programming~\cite{Khachiyan1979PolynomialLP}. Shortly
after, several polynomial algorithms, based on the Ellipsoid Method, were
developed for some combinatorial optimization
problems~\cite{Grotschel1981CombinatorialOptimization}. For more details and
historical remarks on the Ellipsoid Method,
see~\cite{%
  Bland1981EllipsoidSurvey,%
  Nemirovski1995BookInfoCompl,%
  BenTalNemirovski2021BookModernCO%
}.

Despite its long history, the Ellipsoid Method still has some issues which have
not been fully resolved or have been resolved only recently. One of them is the
computation of accuracy certificates which is important for generating
approximate solutions to dual problems or for solving general problems with
convex structure (saddle-point problems, variational inequalities, etc.). For a
long time, the procedure for calculating an accuracy certificate in the
Ellipsoid Method required solving an auxiliary piecewise linear optimization
problem (see, e.g., \crefNamePlural{section}~5 and~6
in~\cite{Nemirovski1995BookInfoCompl}). Although this auxiliary computation did
not use any additional calls to the oracle, it was still computationally
expensive and, in some cases, could take even more time than the Ellipsoid
Method itself. Only recently an efficient alternative has been proposed
\cite{Nemirovski2010AccCert}.

Another issue with the Ellipsoid Method is related to its poor dependency on the
dimensionality of the problem. Consider, e.g., the minimization problem
\begin{equation}\label{opt-prob}
  \min_{x \in Q} f(x),
\end{equation}
where $\Map{f}{\RealField^n}{\RealField}$ is a convex function and $Q
\DefinedEqual \SetBuilder{x \in \RealField^n}{\Norm{x} \leq R}$ is the Euclidean
ball of radius $R > 0$. The Ellipsoid Method for solving~\eqref{opt-prob} can be
written as follows (see, e.g., \crefName{section}~3.2.8
in~\cite{Nesterov2018Book}):
\begin{equation}\label{st-ell-met}
  \begin{aligned}
    x_{k+1} &\DefinedEqual x_k - \frac{1}{n + 1}
      \frac{W_k g_k}{\DualPairing{g_k}{W_k g_k}^{1/2}}, \\
    W_{k+1} &\DefinedEqual \frac{n^2}{n^2 - 1} \biggl(
      W_k - \frac{2}{n + 1}
        \frac{W_k g_k g_k\Transpose W_k}{\DualPairing{g_k}{W_k g_k}}
    \biggr),
    \qquad
    k \geq 0,
  \end{aligned}
\end{equation}
where $x_0 \DefinedEqual 0$, $W_0 \DefinedEqual R^2 I$ ($I$ is the identity
matrix) and $g_k \DefinedEqual f'(x_k)$ is an arbitrary nonzero subgradient if
$x_k \in Q$, and $g_k \DefinedEqual x_k$ is a separator of $x_k$ from $Q$ if
$x_k \notin Q$.

To solve problem~\eqref{opt-prob} with accuracy $\epsilon > 0$ (in terms of the
function value), the Ellipsoid Method needs
\begin{equation}\label{ell-met-compl}
  \BigO\Bigl( n^2 \ln \frac{M R}{\epsilon} \Bigr)
\end{equation}
iterations, where $M > 0$ is the Lipschitz constant of $f$ on $Q$ (see
\crefName{theorem}~3.2.11 in~\cite{Nesterov2018Book}). Looking at this estimate,
we can see an immediate drawback: it directly depends on the dimension and
becomes useless when $n \to \infty$. In particular, we cannot guarantee any
reasonable rate of convergence for the Ellipsoid Method when the dimensionality
of the problem is sufficiently big.

Note that the aforementioned drawback is an artifact of the method itself, not
its analysis. Indeed, when $n \to \infty$, iteration~\eqref{st-ell-met} reads
\[
  x_{k+1} \DefinedEqual x_k,
  \quad
  W_{k+1} \DefinedEqual W_k,
  \quad
  k \geq 0.
\]
Thus, the method stays at the same point and does not make any progress.

On the other hand, the simplest Subgradient Method for solving~\eqref{opt-prob}
possesses the ``dimension-independent'' $\BigO(M^2 R^2 / \epsilon^2)$ iteration
complexity bound (see, e.g., \crefName{section}~3.2.3 in
\cite{Nesterov2018Book}). Comparing this estimate with~\eqref{ell-met-compl}, we
see that the Ellipsoid Method is significantly faster than the Subgradient
Method only when $n$ is not too big compared to $\frac{M R}{\epsilon}$ and
significantly slower otherwise. Clearly, this situation is strange because the
former algorithm does much more work at every iteration by ``improving'' the
``metric'' $W_k$ which is used for measuring the norm of the subgradients.

In this paper, we propose a new ellipsoid-type algorithm for solving nonsmooth
problems with convex structure, which does not have the discussed above
drawback. Our algorithm can be seen as a combination of the Subgradient and
Ellipsoid methods and its convergence rate is basically as good as the best of
the corresponding rates of these two methods (up to some logarithmic factors).
In particular, when $n \to \infty$, the convergence rate of our algorithm
coincides with that of the Subgradient Method.

\subsection*{Contents}

This paper is organized as follows. In \cref{sec-conv-prob-review}, we review
the general formulation of a problem with convex structure and the associated
with it notions of \emph{accuracy certificate} and \emph{residual}. Our
presentation mostly follows~\cite{Nemirovski2010AccCert} with
examples taken from~\cite{Nesterov2009PrimalDual}. Then, in
\cref{sec-semicert-gap}, we introduce the notions of \emph{accuracy
semicertificate} and \emph{gap} and discuss their relation with those of
accuracy certificate and residual.

In \cref{sec-gen-alg-sch}, we present the general algorithmic scheme of our
methods. To measure the convergence rate of this scheme, we introduce the notion
of \emph{sliding gap} and establish some preliminary bounds on it.

In \cref{sec-main-inst}, we discuss different choices of parameters in our
general scheme. First, we show that, by setting some of the parameters to zero,
we obtain the standard Subgradient and Ellipsoid methods. Then we consider a
couple of other less trivial choices which lead to two new algorithms. The
principal of these new algorithms is the latter one, which we call the
\emph{Subgradient Ellipsoid Method}. We demonstrate that the convergence rate of
this algorithm is basically as good as the best of those of the Subgradient and
Ellipsoid methods.

In \cref{sec-gen-acc-cert}, we show that, for both our new methods, it is
possible to efficiently generate accuracy semicertificates whose gap is upper
bounded by the sliding gap. We also compare our approach with the recently
proposed technique from~\cite{Nemirovski2010AccCert} for building
accuracy certificates for the standard Ellipsoid Method.

In \cref{sec-impl-dets}, we discuss how to efficiently implement our general
scheme and the procedure for generating accuracy semicertificates. In
particular, we show that the time and memory requirements of our scheme are the
same as in the standard Ellipsoid Method.

Finally, in \cref{sec-concl}, we discuss some open questions.

\subsection*{Notation and Generalities}

In this paper, $\VectorSpace{E}$ denotes an arbitrary $n$-dimensional real
vector space. Its dual space, composed of all linear functionals on
$\VectorSpace{E}$, is denoted by~$\VectorSpace{E}\Dual$. The value of $s \in
\VectorSpace{E}\Dual$, evaluated at $x \in \VectorSpace{E}$, is denoted by
$\DualPairing{s}{x}$.

Let us introduce in the spaces $\VectorSpace{E}$ and $\VectorSpace{E}\Dual$ a
pair of conjugate Euclidean norms. For this, we fix some self-adjoint positive
definite linear operator $\Map{B}{\VectorSpace{E}}{\VectorSpace{E}\Dual}$ and
define
\[
  \Norm{x} \DefinedEqual \DualPairing{B x}{x}^{1/2},
  \quad x \in \VectorSpace{E},
  \qquad\quad
  \DualNorm{s} \DefinedEqual \DualPairing{s}{B^{-1} s}^{1/2},
  \quad s \in \VectorSpace{E}\Dual.
\]
Note that, for any $s \in \VectorSpace{E}\Dual$ and $x \in \VectorSpace{E}$, we
have the Cauchy-Schwarz inequality
\[
  \AbsoluteValue{\DualPairing{s}{x}} \leq \DualNorm{s} \Norm{x},
\]
which becomes an equality if and only if $s$ and $B x$ are collinear. In
addition to $\Norm{\cdot}$ and $\DualNorm{\cdot}$, we often work with other
Euclidean norms defined in the same way but using another reference operator
instead of $B$. In this case, we write $\RelativeNorm{\cdot}{G}$ and
$\RelativeDualNorm{\cdot}{G}$, where
$\Map{G}{\VectorSpace{E}}{\VectorSpace{E}\Dual}$ is the corresponding
self-adjoint positive definite linear operator.

Sometimes, in the formulas, involving products of linear operators, it is
convenient to treat $x \in \VectorSpace{E}$ as a linear operator from
$\RealField$ to $\VectorSpace{E}$, defined by $x \alpha \DefinedEqual \alpha x$,
and $x\Adjoint$ as a linear operator from $\VectorSpace{E}\Dual$ to
$\RealField$, defined by $x\Adjoint s \DefinedEqual \DualPairing{s}{x}$.
Likewise, any $s \in \VectorSpace{E}\Dual$ can be treated as a linear operator
from $\RealField$ to $\VectorSpace{E}\Dual$, defined by $s \alpha \DefinedEqual
\alpha s$, and $s\Adjoint$ as a linear operator from $\VectorSpace{E}$ to
$\RealField$, defined by $s\Adjoint x \DefinedEqual \DualPairing{s}{x}$. Then,
$x x\Adjoint$ and $s s\Adjoint$ are rank-one self-adjoint linear operators from
$\VectorSpace{E}\Dual$ to $\VectorSpace{E}$ and from $\VectorSpace{E}$ to
$\VectorSpace{E}\Dual$ respectively, acting as follows: $(x x\Adjoint) s =
\DualPairing{s}{x} x$ and $(s s\Adjoint) x = \DualPairing{s}{x} s$ for any $x
\in \VectorSpace{E}$ and $s \in \VectorSpace{E}\Dual$.

For a self-adjoint linear operator
$\Map{G}{\VectorSpace{E}}{\VectorSpace{E}\Dual}$, by $\Trace G$ and $\det G$, we
denote the trace and determinant of $G$ with respect to our fixed operator $B$:
\[
  \Trace G \DefinedEqual \Trace(B^{-1} G),
  \qquad
  \det G \DefinedEqual \det(B^{-1} G).
\]
Note that, in these definitions, $B^{-1} G$ is a linear operator from
$\VectorSpace{E}$ to $\VectorSpace{E}$, so $\Trace(B^{-1} G)$ and $\det(B^{-1}
G)$ are the standard well-defined notions of trace and determinant of a linear
operator acting on the same space. For example, they can be defined as the trace
and determinant of the matrix representation of $B^{-1} G$ with respect to an
arbitrary chosen basis in $\VectorSpace{E}$ (the result is independent of the
particular choice of basis). Alternatively, $\Trace G$ and $\det G$ can be
equivalently defined as the sum and product, respectively, of the eigenvalues of
$G$ with respect to $B$.

For a point $x \in \VectorSpace{E}$ and a real $r > 0$, by
\[
  \Ball{x}{r} \DefinedEqual
  \SetBuilder{y \in \VectorSpace{E}}{\Norm{y - x} \leq r},
\]
we denote the closed Euclidean ball with center $x$ and radius $r$.

Given two solids\footnote{%
  Hereinafter, a \emph{solid} is any convex compact set with nonempty interior.%
} %
$Q, Q_0 \subseteq \VectorSpace{E}$, we can define the \emph{relative volume} of
$Q$ with respect to~$Q_0$ by $\Volume(Q / Q_0) \DefinedEqual \Volume Q^e /
\Volume Q_0^e$, where $e$ is an arbitrary basis in $\VectorSpace{E}$, $Q^e,
Q_0^e \subseteq \RealField^n$ are the coordinate representations of the sets $Q,
Q_0$ in the basis $e$ and $\Volume$ is the Lebesgue measure in $\RealField^n$.
Note that the relative volume is independent of the particular choice of the
basis $e$. Indeed, for any other basis $f$, we have $Q^e = T_f^e Q^f$, $Q_0^e =
T_f^e Q_0^f$, where $T_f^e$ is the $n \times n$ change-of-basis matrix, so
$\Volume Q^e = (\det T_f^e) (\Volume Q^f)$, $\Volume Q_0^e = (\det T_f^e)
(\Volume Q_0^f)$ and hence $\Volume Q^e / \Volume Q_0^e = \Volume Q^f / \Volume
Q_0^f$.

For us, it will be convenient to define the \emph{volume} of a solid $Q
\subseteq \VectorSpace{E}$ as the relative volume of $Q$ with respect to the
unit ball:
\[
  \Volume Q \DefinedEqual \Volume(Q / \Ball{0}{1}).
\]
For an ellipsoid $W \DefinedEqual \SetBuilder{x \in
\VectorSpace{E}}{\DualPairing{G x}{x} \leq 1}$, where
$\Map{G}{\VectorSpace{E}}{\VectorSpace{E}\Dual}$ is a self-adjoint positive
definite linear operator, we have $\Volume W = (\det G)^{-1/2}$.

\section{Convex Problems and Accuracy Certificates}\label{sec-conv-probs}

\subsection{Description and Examples}\label{sec-conv-prob-review}

In this paper, we consider numerical algorithms for solving \emph{problems with
convex structure}. The main examples of such problems are convex minimization
problems, convex-concave saddle-point problems, convex Nash equilibrium
problems, and variational inequalities with monotone operators.

The general formulation of a problem with convex structure involves two objects:
\begin{itemize}
  \item Solid $Q \subseteq \VectorSpace{E}$ (called the \emph{feasible set}),
  represented by the \emph{Separation Oracle}: given any point $x \in
  \VectorSpace{E}$, this oracle can check whether $x \in \Interior Q$, and if
  not, it reports a vector $g_Q(x) \in \VectorSpace{E}\Dual \setminus \Set{0}$
  which separates $x$ from $Q$:
  \begin{equation}\label{def-sep}
    \DualPairing{g_Q(x)}{x - y} \geq 0,
    \quad \forall y \in Q.
  \end{equation}

  \item Vector field $\Map{g}{\Interior Q}{\VectorSpace{E}\Dual}$, represented
  by the \emph{First-Order Oracle}: given any point $x \in \Interior Q$, this
  oracle returns the vector $g(x)$.
\end{itemize}
In what follows, we only consider the problems satisfying the following
condition:
\begin{equation}\label{gen-conv-x-star}
  \exists x^* \in Q \colon \quad
  \DualPairing{g(x)}{x - x^*} \geq 0,
  \quad \forall x \in \Interior Q.
\end{equation}

A numerical algorithm for solving a problem with convex structure starts at some
point $x_0 \in \VectorSpace{E}$. At each step $k \geq 0$, it queries the oracles
at the current \emph{test point} $x_k$ to obtain the new information about the
problem, and then somehow uses this new information to form the next test point
$x_{k+1}$. Depending on whether $x_k \in \Interior Q$, the $k$th step of
the algorithm is called \emph{productive} or \emph{nonproductive}.

The total information, obtained by the algorithm from the oracles after $k \geq
1$ steps, comprises its \emph{execution protocol} which consists of:
\begin{itemize}
  \item The test points $x_0, \ldots, x_{k-1} \in \VectorSpace{E}$.
  \item The set of productive steps $I_k \DefinedEqual \SetBuilder{0 \leq i \leq
  k-1}{x_i \in \Interior Q}$.
  \item The vectors $g_0, \ldots, g_{k-1} \in \VectorSpace{E}\Dual$ reported by
  the oracles: $g_i \DefinedEqual g(x_i)$, if $i \in I_k$, and $g_i
  \DefinedEqual g_Q(x_i)$, if $i \notin I_k$, $0 \leq i \leq k-1$.
  
\end{itemize}

An \emph{accuracy certificate}, associated with the above execution protocol, is
a nonnegative vector $\lambda \DefinedEqual (\lambda_0, \ldots, \lambda_{k-1})$
such that $S_k(\lambda) \DefinedEqual \sum_{i \in I_k} \lambda_i > 0$ (and, in
particular, $I_k \neq \emptyset$). Given any solid $\Omega$, containing $Q$, we
can define the following \emph{residual} of $\lambda$ on $\Omega$:
\begin{equation}\label{def-resid}
  \epsilon_k(\lambda)
  \DefinedEqual \max_{x \in \Omega} \frac{1}{S_k(\lambda)}
    \sum_{i=0}^{k-1} \lambda_i \DualPairing{g_i}{x_i - x},
\end{equation}
which is easily computable whenever $\Omega$ is a simple set (e.g., a Euclidean
ball). Note that
\begin{equation}\label{resid-lbd}
  \epsilon_k(\lambda)
  \geq \max_{x \in Q} \frac{1}{S_k(\lambda)} \sum_{i=0}^{k-1}
    \lambda_i \DualPairing{g_i}{x_i - x}
  \geq \max_{x \in Q} \frac{1}{S_k(\lambda)} \sum_{i \in I_k}
    \lambda_i \DualPairing{g_i}{x_i - x}
\end{equation}
and, in particular, $\epsilon_k(\lambda) \geq 0$ in view of
\cref{gen-conv-x-star}.

In what follows, we will be interested in the algorithms, which can produce
accuracy certificates $\lambda\UpperIndex{k}$ with
$\epsilon_k(\lambda\UpperIndex{k}) \to 0$ at a certain rate. This is a
meaningful goal because, for all known instances of problems with convex
structure, the residual $\epsilon_k(\lambda)$ upper bounds a certain natural
inaccuracy measure for the corresponding problem. Let us briefly review some
standard examples (for more examples, see
\cite{Nesterov2009PrimalDual,Nemirovski2010AccCert} and the
references therein).

\begin{example}[Convex Minimization Problem]
  Consider the problem
  \begin{equation}\label{conv-min-prob}
    f^* \DefinedEqual \min_{x \in Q} f(x),
  \end{equation}
  where $Q \subseteq \VectorSpace{E}$ is a solid and
  $\Map{f}{\VectorSpace{E}}{\RealFieldPlusInfinity}$ is closed convex and finite
  on $\Interior Q$.
  
  The First-Order Oracle for~\eqref{conv-min-prob} is $g(x) \DefinedEqual
  f'(x)$, $x \in \Interior Q$, where $f'(x)$ is an arbitrary subgradient of $f$
  at $x$. Clearly, \cref{gen-conv-x-star} holds for $x^*$ being any solution
  of~\eqref{conv-min-prob}.
  
  It is not difficult to verify that, in this example, the residual
  $\epsilon_k(\lambda)$ upper bounds the functional residual: for $\hat{x}_k
  \DefinedEqual \frac{1}{S_k(\lambda)} \sum_{i \in I_k} \lambda_i x_i$ or $x_k^*
  \DefinedEqual \argmin \SetBuilder{f(x)}{x \in X_k}$, where $X_k \DefinedEqual
  \SetBuilder{x_i}{i \in I_k}$, we have $f(\hat{x}_k) - f^* \leq
  \epsilon_k(\lambda)$ and $f(x_k^*) - f^* \leq \epsilon_k(\lambda)$.
  
  Moreover, $\epsilon_k(\lambda)$, in fact, upper bounds the primal-dual gap for
  a certain dual problem for~\eqref{conv-min-prob}. Indeed, let
  $\Map{f_*}{\VectorSpace{E}\Dual}{\RealFieldPlusInfinity}$ be the conjugate
  function of $f$. Then, we can represent~\eqref{conv-min-prob} in the following
  dual form:
  \begin{equation}\label{conv-min-prob-dual}
    f^*
    = \adjustlimits \min_{x \in Q} \max_{s \in \EffectiveDomain f_*}
      [\DualPairing{s}{x} - f_*(s)]
    = \max_{s \in \EffectiveDomain f_*} [-f_*(s) - \xi_Q(-s)],
  \end{equation}
  where $\EffectiveDomain f_* \DefinedEqual \SetBuilder{s \in
  \VectorSpace{E}\Dual}{f_*(s) < +\infty}$ and $\xi_Q(-s) \DefinedEqual \max_{x
  \in Q} \DualPairing{-s}{x}$. Denote $s_k \DefinedEqual \frac{1}{S_k(\lambda)}
  \sum_{i \in I_k} \lambda_i g_i$. Then, using \cref{resid-lbd} and the
  convexity of $f$ and $f_*$, we obtain
  \[
    \begin{aligned}
      \epsilon_k(\lambda)
      &\geq \frac{1}{S_k(\lambda)} \sum_{i \in I_k}
        \lambda_i \DualPairing{g_i}{x_i} + \xi_Q(-s_k)
      = \frac{1}{S_k(\lambda)} \sum_{i \in I_k}
        \lambda_i [f(x_i) + f_*(g_i)] + \xi_Q(-s_k) \\
      &\geq f(\hat{x}_k) + f_*(s_k) + \xi_Q(-s_k).
    \end{aligned}
  \]
  Thus, $\hat{x}_k$ and $s_k$ are $\epsilon_k(\lambda)$-approximate solutions
  (in terms of function value) to problems~\eqref{conv-min-prob}
  and~\eqref{conv-min-prob-dual}, respectively. Note that the same is true if we
  replace $\hat{x}_k$ with $x_k^*$.
\end{example}

\begin{example}[Convex-Concave Saddle-Point Problem]
  Consider the following problem:
  \begin{equation}\label{saddle-prob}
    \text{Find $(u^*, v^*) \in U \times V$} \colon \quad
    f(u^*, v) \leq f(u^*, v^*) \leq f(u, v^*),
    \quad
    \forall (u, v) \in U \times V,
  \end{equation}
  where $U$, $V$ are solids in some finite-dimensional vector spaces
  $\VectorSpace{E}_u$, $\VectorSpace{E}_v$, respectively, and $\Map{f}{U \times
  V}{\RealField}$ is a continuous function which is \emph{convex-concave}, i.e.
  $f(\cdot, v)$ is convex and $f(u, \cdot)$ is concave for any $u \in U$ and any
  $v \in V$.

  In this example, we set $\VectorSpace{E} \DefinedEqual \VectorSpace{E}_u
  \times \VectorSpace{E}_v$, $Q \DefinedEqual U \times V$ and use the
  First-Order Oracle
  \[
    g(x) \DefinedEqual (f'_u(x), -f'_v(x)),
    \quad x \DefinedEqual (u, v) \in \Interior Q,
  \]  
  where $f'_u(x)$ is an arbitrary subgradient of $f(\cdot, v)$ at $u$ and
  $f'_v(y)$ is an arbitrary supergradient of $f(u, \cdot)$ at $v$. Then, for any
  $x \DefinedEqual (u, v) \in \Interior Q$ and any $x' \DefinedEqual (u', v')
  \in Q$,
  \begin{equation}\label{sad-prob-grad-ineq}
    \DualPairing{g(x)}{x - x'}
    = \DualPairing{f'_u(x)}{u - u'} - \DualPairing{f'_v(x)}{v - v'}
    \geq f(u, v') - f(u', v).
  \end{equation}
  In particular, \cref{gen-conv-x-star} holds for $x^* \DefinedEqual (u^*, v^*)$
  in view of \cref{saddle-prob}.

  Let $\Map{\phi}{U}{\RealField}$ and $\Map{\psi}{V}{\RealField}$ be the
  functions
  \[
    \phi(u) \DefinedEqual \max_{v \in V} f(u, v),
    \qquad
    \psi(v) \DefinedEqual \min_{u \in U} f(u, v).
  \]
  In view of \cref{saddle-prob}, we have $\psi(v) \leq f(u^*, v^*) \leq \phi(u)$
  for all $(u, v) \in U \times V$. Therefore, the difference $\phi(u) - \psi(v)$
  (called the \emph{primal-dual gap}) can be used for measuring the quality of
  an approximate solution $x \DefinedEqual (u, v) \in Q$ to
  problem~\eqref{saddle-prob}.
  
  Denoting $\hat{x}_k \DefinedEqual \frac{1}{S_k(\lambda)} \sum_{i \in I_k}
  \lambda_i x_i \EqualDefines (\hat{u}_k, \hat{v}_k)$ and using
  \cref{resid-lbd}, we obtain
  \[
    \begin{aligned}
	    \epsilon_k(\lambda)
      &\geq \max_{x \in Q} \frac{1}{S_k(\lambda)} \sum_{i \in I_k}
        \lambda_i \DualPairing{g_i}{x_i - x}
      \geq \max_{u \in U, v \in V} \frac{1}{S_k(\lambda)} \sum_{i \in I_k}
        \lambda_i [f(u_i, v) - f(u, v_i)] \\
      &\geq \max_{u \in U, v \in V} [f(\hat{u}_k, v) - f(u, \hat{v}_k)]
      = \phi(\hat{u}_k) - \psi(\hat{v}_k),
    \end{aligned}
  \]
  where the second inequality is due to \cref{sad-prob-grad-ineq} and the last
  one follows from the convexity-concavity of $f$. Thus, the residual
  $\epsilon_k(\lambda)$ upper bounds the primal-dual gap for the approximate
  solution $\hat{x}_k$.
\end{example}

\begin{example}[Variational Inequality with Monotone Operator]
  Let $Q \subseteq \VectorSpace{E}$ be a solid and let
  $\Map{V}{Q}{\VectorSpace{E}\Dual}$ be a continuous operator which is
  \emph{monotone}, i.e. $\DualPairing{V(x) - V(y)}{x - y} \geq 0$ for all $x, y
  \in Q$. The goal is to solve the following (weak) \emph{variational
  inequality}:
  \begin{equation}\label{vi-prob}
    \text{Find $x^* \in Q$} \colon \quad
    \DualPairing{V(x)}{x - x^*} \geq 0,
    \quad
    \forall x \in Q.
  \end{equation}
  Since $V$ is continuous, this problem is equivalent to its strong variant:
  find $x^* \in Q$ such that $\DualPairing{V(x^*)}{x - x^*} \geq 0$ for all $x
  \in Q$.

  A standard tool for measuring the quality of an approximate solution
  to~\eqref{vi-prob} is the \emph{dual gap function}, introduced
  in~\cite{Auslender1973}:
  \[
    f(x) \DefinedEqual \max_{y \in Q} \DualPairing{V(y)}{x - y},
    \qquad x \in Q.
  \]
  It is easy to see that $f$ is a convex nonnegative function which equals $0$
  exactly at the solutions of~\eqref{vi-prob}.

  In this example, the First-Order Oracle is defined by $g(x) \DefinedEqual
  V(x)$, $x \in \Interior Q$. Denote $\hat{x}_k \DefinedEqual
  \frac{1}{S_k(\lambda)} \sum_{i \in I_k} \lambda_i x_i$. Then, using
  \cref{resid-lbd} and the monotonicity of $V$, we obtain
  \[
    \epsilon_k(\lambda)
    \geq \max_{x \in Q} \frac{1}{S_k(\lambda)} \sum_{i \in I_k}
      \lambda_i \DualPairing{V(x_i)}{x_i - x}
    \geq \max_{x \in Q} \frac{1}{S_k(\lambda)} \sum_{i \in I_k}
      \lambda_i \DualPairing{V(x)}{x_i - x}
    = f(\hat{x}_k).
  \]
  Thus, $\epsilon_k(\lambda)$ upper bounds the dual gap function for the
  approximate solution $\hat{x}_k$.
\end{example}

\subsection{Establishing Convergence of Residual}
\label{sec-semicert-gap}

For the algorithms, considered in this paper, instead of accuracy certificates
and residuals, it turns out to be more convenient to speak about closely related
notions of \emph{accuracy semicertificates} and \emph{gaps}, which we now
introduce.

As before, let $x_0, \dots, x_{k-1}$ be the test points, generated by the
algorithm after $k \geq 1$ steps, and let $g_0, \dots, g_{k-1}$ be the
corresponding oracle outputs. An \emph{accuracy semicertificate}, associated
with this information, is a nonnegative vector $\lambda \DefinedEqual
(\lambda_0, \dots, \lambda_{k-1})$ such that $\Gamma_k(\lambda) \DefinedEqual
\sum_{i=0}^{k-1} \lambda_i \DualNorm{g_i} > 0$. Given any solid $\Omega$,
containing $Q$, the \emph{gap} of $\lambda$ on $\Omega$ is defined in the
following way:
\begin{equation}\label{def-gap}
  \delta_k(\lambda)
  \DefinedEqual \max_{x \in \Omega} \frac{1}{\Gamma_k(\lambda)}
      \sum_{i=0}^{k-1} \lambda_i \DualPairing{g_i}{x_i - x}.
\end{equation}
Comparing these definitions with those of accuracy certificate and residual, we
see that the only difference between them is that now we use a different
``normalizing'' coefficient: $\Gamma_k(\lambda)$ instead of $S_k(\lambda)$.
Also, in the definitions of semicertificate and gap, we do not make any
distinction between productive and nonproductive steps. Note that
$\delta_k(\lambda) \geq 0$.

Let us demonstrate that by making the gap sufficiently small, we can make the
corresponding residual sufficiently small as well. For this, we need the
following standard assumption about our problem with convex structure (see,
e.g., \cite{Nemirovski2010AccCert}).

\begin{assumption}\label{as-g-sbnd}%
  The vector field $g$, reported by the First-Order Oracle, is semibounded:
  \[
    \DualPairing{g(x)}{y - x} \leq V,
    \quad
    \forall x \in \Interior Q,
    \ \forall y \in Q.
  \]
\end{assumption}
A classical example of a semibounded field is a bounded one: if there is $M \geq
0$, such that $\DualNorm{g(x)} \leq M$ for all $x \in \Interior Q$, then $g$ is
semibounded with $V \DefinedEqual M D$, where $D$ is the diameter of $Q$.
However, there exist other examples. For instance, if $g$ is the subgradient
field of a convex function $\Map{f}{\VectorSpace{E}}{\RealFieldPlusInfinity}$,
which is finite and continuous on $Q$, then $g$ is semibounded with $V
\DefinedEqual \max_Q f - \min_Q f$ (variation of $f$ on~$Q$); however, $g$ is
not bounded if $f$ is not Lipschitz continuous (e.g., $f(x) \DefinedEqual
-\sqrt{x}$ on $Q \DefinedEqual \ClosedClosedInterval{0}{1}$). Another
interesting example is the subgradient field $g$ of a $\nu$-self-concordant
barrier $\Map{f}{\VectorSpace{E}}{\RealFieldPlusInfinity}$ for the set $Q$; in
this case, $g$ is semibounded with $V \DefinedEqual \nu$ (see e.g.
\cite[Theorem~5.3.7]{Nesterov2018Book}), while $f(x) \to +\infty$ at the
boundary of $Q$.

\begin{lemma}\label{lm-resid-ubd-via-gap}%
  Let $\lambda$ be a semicertificate such that $\delta_k(\lambda) < r$, where
  $r$ is the largest of the radii of Euclidean balls contained in $Q$. Then,
  $\lambda$ is a certificate and
  \[
    \epsilon_k(\lambda)
    \leq \frac{\delta_k(\lambda)}{r - \delta_k(\lambda)} V.
  \]
\end{lemma}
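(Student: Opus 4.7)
The plan is to relate $\epsilon_k(\lambda)$ and $\delta_k(\lambda)$ through their common numerator and then bound the ratio $\Gamma_k(\lambda)/S_k(\lambda)$ from above using the ball of radius $r$ inscribed in~$Q$.

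First I would observe that the definitions of residual and gap share the same ``numerator'': if we set
\[
  M(\lambda) \DefinedEqual \max_{x \in \Omega} \sum_{i=0}^{k-1}
    \lambda_i \DualPairing{g_i}{x_i - x},
\]
then $S_k(\lambda) \epsilon_k(\lambda) = M(\lambda) = \Gamma_k(\lambda) \delta_k(\lambda)$. Hence the whole statement reduces to proving that $S_k(\lambda) \geq \Gamma_k(\lambda)(r - \delta_k(\lambda))/V$, because this simultaneously gives $S_k(\lambda) > 0$ (so $\lambda$ is a certificate and $I_k \neq \emptyset$) and the claimed inequality after dividing $M(\lambda)$ by $S_k(\lambda)$.

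To obtain that lower bound on $S_k(\lambda)$, I would fix a center $c$ with $\Ball{c}{r} \subseteq Q$, and for each index $i$ test the oracle information at the specific point $y_i^* \DefinedEqual c + r B^{-1} g_i / \DualNorm{g_i} \in \Ball{c}{r} \subseteq Q$, which satisfies $\DualPairing{g_i}{y_i^* - c} = r \DualNorm{g_i}$. For a productive step, \cref{as-g-sbnd} applied at $y = y_i^*$ yields $\DualPairing{g_i}{x_i - c} \geq r \DualNorm{g_i} - V$; for a nonproductive step, the separator property~\eqref{def-sep} applied at $y = y_i^*$ yields the stronger bound $\DualPairing{g_i}{x_i - c} \geq r \DualNorm{g_i}$. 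Combining the two cases into the unified inequality $\DualPairing{g_i}{x_i - c} \geq r \DualNorm{g_i} - V \cdot \mathbf{1}_{i \in I_k}$, multiplying by $\lambda_i \geq 0$ and summing over $i$, I get
\[
  \sum_{i=0}^{k-1} \lambda_i \DualPairing{g_i}{x_i - c}
  \geq r \Gamma_k(\lambda) - V S_k(\lambda).
\]

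Finally, since $c \in Q \subseteq \Omega$, the left-hand side is at most $M(\lambda) = \Gamma_k(\lambda) \delta_k(\lambda)$, giving $\Gamma_k(\lambda)(r - \delta_k(\lambda)) \leq V S_k(\lambda)$. Under the hypothesis $\delta_k(\lambda) < r$ and using $\Gamma_k(\lambda) > 0$, this forces $S_k(\lambda) > 0$, hence $I_k \neq \emptyset$ and $\lambda$ is an accuracy certificate; dividing $M(\lambda) = S_k(\lambda)\epsilon_k(\lambda)$ by $S_k(\lambda)$ and using the displayed bound produces the claimed estimate $\epsilon_k(\lambda) \leq \delta_k(\lambda) V / (r - \delta_k(\lambda))$. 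The only nontrivial step is the choice of the test point $y_i^*$ that turns the separating/semiboundedness inequalities into a bound featuring $r\DualNorm{g_i}$ in a way that matches $\Gamma_k(\lambda)$; once that is set up, the rest is bookkeeping.
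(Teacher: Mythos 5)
Your proof is correct; it rests on the same core estimate as the paper but organizes the final step differently. Both arguments hinge on testing the separation property~\eqref{def-sep} (for nonproductive steps) and \cref{as-g-sbnd} (for productive steps) at the maximizer of $\DualPairing{g_i}{\cdot}$ over an inscribed ball $\Ball{\bar{x}}{r} \subseteq Q$, which yields $\sum_{i=0}^{k-1} \lambda_i \DualPairing{g_i}{x_i - \bar{x}} \geq r \Gamma_k(\lambda) - V S_k(\lambda)$ --- precisely the paper's \cref{sum-g-x-bar}. The difference lies in how this is combined with the definition of the gap. The paper evaluates the maximum defining $\delta_k(\lambda)$ at the auxiliary convex combination $y = \bigl(\delta_k(\lambda) \bar{x} + (r - \delta_k(\lambda)) x\bigr)/r$ for every $x \in \Omega$, thereby bounding the numerator $\sum_{i} \lambda_i \DualPairing{g_i}{x_i - x}$ without ever dividing by $S_k(\lambda)$, and only afterwards rules out $S_k(\lambda) = 0$ by a separate contradiction argument. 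You instead observe that residual and gap share the same numerator, so that $S_k(\lambda)\, \epsilon_k(\lambda) = \Gamma_k(\lambda)\, \delta_k(\lambda)$, and reduce everything to the single scalar inequality $V S_k(\lambda) \geq \bigl(r - \delta_k(\lambda)\bigr) \Gamma_k(\lambda)$, obtained by evaluating the gap's maximum at $\bar{x}$ alone. This is arguably the cleaner organization: the certificate claim $S_k(\lambda) > 0$ falls out immediately rather than requiring a separate argument, and the quantitative content (the productive weight cannot be small relative to $\Gamma_k(\lambda)$) is isolated in one transparent inequality. The only cosmetic point is your division by $V$ in the intermediate reduction: it is safer to state it in the product form $V S_k(\lambda) \geq \bigl(r - \delta_k(\lambda)\bigr) \Gamma_k(\lambda)$, since in the degenerate case $V = 0$ this inequality already contradicts $\delta_k(\lambda) < r$ and $\Gamma_k(\lambda) > 0$, so the hypotheses are then vacuous and nothing is lost.
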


\begin{proof}
  Denote $\delta_k \DefinedEqual \delta_k(\lambda)$, $\Gamma_k \DefinedEqual
  \Gamma_k(\lambda)$, $S_k \DefinedEqual S_k(\lambda)$. Let $\bar{x} \in Q$ be
  such that $\Ball{\bar{x}}{r} \subseteq Q$. For each $0 \leq i \leq k-1$, let
  $z_i$ be a maximizer of $z \mapsto \DualPairing{g_i}{z - \bar{x}}$ on
  $\Ball{\bar{x}}{r}$. Then, for any $0 \leq i \leq k-1$, we have
  $\DualPairing{g_i}{\bar{x} - x_i} = \DualPairing{g_i}{z_i - x_i} - r
  \DualNorm{g_i}$ with $z_i \in Q$. Therefore,
  \begin{equation}\label{sum-g-x-bar}
    \sum_{i=0}^{k-1} \lambda_i \DualPairing{g_i}{\bar{x} - x_i}
    = \sum_{i=0}^{k-1} \lambda_i \DualPairing{g_i}{z_i - x_i} - r \Gamma_k
    \leq S_k V - r \Gamma_k,
  \end{equation}
  where the inequality follows from the separation property~\eqref{def-sep} and
  \cref{as-g-sbnd}.

  Let $x \in \Omega$ be arbitrary. Denoting $y \DefinedEqual \bigl( \delta_k
  \bar{x} + (r - \delta_k) x \bigr) / r \in \Omega$, we obtain
  \begin{equation}\label{cert-prel}    
    \begin{aligned}
      (r - \delta_k) \sum_{i=0}^{k-1} \lambda_i \DualPairing{g_i}{x_i - x}
      &= r \sum_{i=0}^{k-1} \lambda_i \DualPairing{g_i}{x_i - y}
        + \delta_k \sum_{i=0}^{k-1}
            \lambda_i \DualPairing{g_i}{\bar{x} - x_i} \\
      &\leq r \delta_k \Gamma_k +
        \delta_k \sum_{i=0}^{k-1} \lambda_i \DualPairing{g_i}{\bar{x} - x_i}
      \leq \delta_k S_k V.
    \end{aligned}
  \end{equation}
  where the inequalities follow from the definition~\eqref{def-gap} of
  $\delta_k$ and \cref{sum-g-x-bar}, respectively.

  It remains to show that $\lambda$ is a certificate, i.e. $S_k > 0$. But this
  is simple. Indeed, if $S_k = 0$, then, taking $x \DefinedEqual \bar{x}$ in
  \cref{cert-prel} and using \cref{sum-g-x-bar}, we get $0 \geq \sum_{i=0}^{k-1}
  \lambda_i \DualPairing{g_i}{x_i - \bar{x}} \geq r \Gamma_k$, which contradicts
  our assumption that $\lambda$ is a semicertificate, i.e. $\Gamma_k > 0$.
\end{proof}

According to \cref{lm-resid-ubd-via-gap}, from the convergence rate of the gap
$\delta_k(\lambda\UpperIndex{k})$ to zero, we can easily obtain the
corresponding convergence rate of the residual
$\epsilon_k(\lambda\UpperIndex{k})$. In particular, to ensure that
$\epsilon_k(\lambda\UpperIndex{k}) \leq \epsilon$ for some $\epsilon > 0$, it
suffices to make $\delta_k(\lambda\UpperIndex{k}) \leq \delta(\epsilon)
\DefinedEqual \frac{\epsilon r}{\epsilon + V}$. For this reason, in the rest of
this paper, we can focus our attention on studying the convergence rate only for
the gap.

\section{General Algorithmic Scheme}\label{sec-gen-alg-sch}

Consider the general scheme presented in \cref{alg-gen-sch}. This scheme works
with an arbitrary oracle $\Map{\GCal}{\VectorSpace{E}}{\VectorSpace{E}\Dual}$
satisfying the following condition:
\begin{equation}\label{oracle-asum}
  \exists x^* \in \Ball{x_0}{R} \colon \quad
  \DualPairing{\GCal(x)}{x - x^*} \geq 0,
  \quad \forall x \in \VectorSpace{E}.
\end{equation}
The point $x^*$ from \cref{oracle-asum} is typically called a \emph{solution} of
our problem. For the general problem with convex structure, represented by the
First-Order Oracle $g$ and the Separation Oracle $g_Q$ for the solid $Q$, the
oracle $\GCal$ is usually defined as follows: $\GCal(x) \DefinedEqual g(x)$, if
$x \in \Interior Q$, and $\GCal(x) \DefinedEqual g_Q(x)$, otherwise. To ensure
\cref{oracle-asum}, the constant $R$ needs to be chosen sufficiently big so that
$Q \subseteq \Ball{x_0}{R}$.

\begin{SimpleAlgorithm}[%
  title={General Scheme of Subgradient Ellipsoid Method},%
  label=alg-gen-sch,%
  width=0.85\linewidth%
]%
  \begin{AlgorithmGroup}[Input]
    Point $x_0 \in \VectorSpace{E}$ and scalar $R > 0$.
  \end{AlgorithmGroup}
  \begin{AlgorithmGroup}[Initialization]
    Define the functions $\ell_0(x) \DefinedEqual 0$, %
    $\omega_0(x) \DefinedEqual \frac{1}{2} \Norm{x - x_0}^2$.
  \end{AlgorithmGroup}
  \AlgorithmGroupSeparator

  \begin{AlgorithmGroup}[For $k \geq 0$ iterate]
    \begin{AlgorithmSteps}
      \AlgorithmStep
        Query the oracle to obtain $g_k \DefinedEqual \GCal(x_k)$.

      \AlgorithmStep\label{alg:NewEllipsoidMethod:ComputeU}
        Compute $U_k \DefinedEqual \max_{x \in \Omega_k \cap L_k^-}
        \DualPairing{g_k}{x_k - x}$, where
        \[
          \Omega_k \DefinedEqual \SetBuilder{x \in \VectorSpace{E}}{
            \omega_k(x) \leq \tfrac{1}{2} R^2},
          \qquad
          L_k^- \DefinedEqual \SetBuilder{x \in \VectorSpace{E}}{
            \ell_k(x) \leq 0}.
        \]

      \AlgorithmStep%
        Choose some coefficients $a_k, b_k \geq 0$ and update the functions
        \begin{equation}\label{upd-ell-omega}
          {\begin{aligned}
            \ell_{k+1}(x) &\DefinedEqual \ell_k(x)
              + a_k \DualPairing{g_k}{x - x_k}, \\
            \omega_{k+1}(x) &\DefinedEqual \omega_k(x)
              + \tfrac{1}{2} b_k (U_k - \DualPairing{g_k}{x_k - x})
                \DualPairing{g_k}{x - x_k}.
          \end{aligned}}
        \end{equation}

      \AlgorithmStep
        Set $x_{k+1} \DefinedEqual \argmin_{x \in \VectorSpace{E}}
          [\ell_{k+1}(x) + \omega_{k+1}(x)]$.
    \end{AlgorithmSteps}
  \end{AlgorithmGroup}
\end{SimpleAlgorithm}

Note that, in \cref{alg-gen-sch}, $\omega_k$ are strictly convex quadratic
functions and $\ell_k$ are affine functions. Therefore, the sets $\Omega_k$ are
certain ellipsoids and $L_k^-$ are certain halfspaces (possibly degenerate).

Let us show that \cref{alg-gen-sch} is a cutting-plane scheme in which the sets
$\Omega_k \cap L_k^-$ are the localizers of the solution $x^*$.

\begin{lemma}\label{lm-set-inc}%
  In \cref{alg-gen-sch}, for all $k \geq 0$, we have $x^* \in \Omega_k \cap
  L_k^-$ and $\hat{Q}_{k+1} \subseteq \Omega_{k+1} \cap L_{k+1}^-$, where
  $\hat{Q}_{k+1} \DefinedEqual \SetBuilder{x \in \Omega_k \cap
  L_k^-}{\DualPairing{g_k}{x - x_k} \leq 0}$.
\end{lemma}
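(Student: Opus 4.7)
The two claims are naturally coupled, so I would prove them simultaneously by induction on $k$, using the second claim as the inductive step for the first. The base case $x^* \in \Omega_0 \cap L_0^-$ is immediate: by initialization $L_0^- = \VectorSpace{E}$ and $\Omega_0 = \Ball{x_0}{R}$, which contains $x^*$ by the oracle assumption \cref{oracle-asum} (with $R$ chosen so that $Q \subseteq \Ball{x_0}{R}$).

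For the inclusion $\hat{Q}_{k+1} \subseteq \Omega_{k+1} \cap L_{k+1}^-$, fix any $x \in \hat{Q}_{k+1}$, so that $x \in \Omega_k \cap L_k^-$ and $\DualPairing{g_k}{x - x_k} \leq 0$. The membership $x \in L_{k+1}^-$ follows at once from the update $\ell_{k+1}(x) = \ell_k(x) + a_k \DualPairing{g_k}{x - x_k}$, since both $\ell_k(x) \leq 0$ and (using $a_k \geq 0$) the increment is nonpositive. The membership $x \in \Omega_{k+1}$ is the crucial part: writing $s \DefinedEqual \DualPairing{g_k}{x_k - x} \geq 0$, we have by definition of $U_k$ that $s \leq U_k$ because $x \in \Omega_k \cap L_k^-$, so $U_k - s \geq 0$. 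Hence the quadratic increment
\[
\omega_{k+1}(x) - \omega_k(x)
= \tfrac{1}{2} b_k (U_k - s)(-s) \leq 0,
\]
which combined with $\omega_k(x) \leq \tfrac{1}{2} R^2$ gives $\omega_{k+1}(x) \leq \tfrac{1}{2} R^2$.

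For the inductive step on $x^*$, assume $x^* \in \Omega_k \cap L_k^-$. By the oracle property \cref{oracle-asum}, $\DualPairing{g_k}{x^* - x_k} \leq 0$, which places $x^*$ in $\hat{Q}_{k+1}$; the inclusion just proved then yields $x^* \in \Omega_{k+1} \cap L_{k+1}^-$, closing the induction.

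\textbf{Expected difficulty.} There is no serious obstacle here: the update is engineered precisely so that both increments are nonpositive on $\hat{Q}_{k+1}$. The only point that requires a moment's thought is recognizing that $U_k$ acts as an upper bound on $\DualPairing{g_k}{x_k - x}$ for every $x$ in the previous localizer, which is exactly what forces the quadratic factor $(U_k - \DualPairing{g_k}{x_k - x})$ to be nonnegative and the whole $\omega$-increment to be nonpositive.
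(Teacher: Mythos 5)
Your proof is correct and follows essentially the same route as the paper: induction on $k$, with the base case $\Omega_0 \cap L_0^- = \Ball{x_0}{R} \ni x^*$, the oracle condition~\eqref{oracle-asum} placing $x^*$ in $\hat{Q}_{k+1}$, and the observation $0 \leq \DualPairing{g_k}{x_k - x} \leq U_k$ making both increments in~\eqref{upd-ell-omega} nonpositive. The only difference is that you spell out the sign analysis of the quadratic increment in more detail than the paper does.
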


\begin{proof}
  Let us prove the claim by induction. Clearly, $\Omega_0 = \Ball{x_0}{R}$ and
  $L_0^- = \VectorSpace{E}$, hence $\Omega_0 \cap L_0^- = \Ball{x_0}{R} \ni x^*$
  by \cref{oracle-asum}. Suppose we have already proved that $x^* \in \Omega_k
  \cap L_k^-$ for some $k \geq 0$. Combining this with \cref{oracle-asum}, we
  obtain $x^* \in \hat{Q}_{k+1}$, so it remains to show that $\hat{Q}_{k+1}
  \subseteq \Omega_{k+1} \cap L_{k+1}^-$. Let $x \in \hat{Q}_{k+1}$ ($\subseteq
  \Omega_k \cap L_k^-$) be arbitrary. Note that $0 \leq \DualPairing{g_k}{x_k -
  x} \leq U_k$. Hence, by \cref{upd-ell-omega}, $\ell_{k+1}(x) \leq \ell_k(x)
  \leq 0$ and $\omega_{k+1}(x) \leq \omega_k(x) \leq \frac{1}{2} R^2$, which
  means that $x \in \Omega_{k+1} \cap L_{k+1}^-$.
\end{proof}

Next, let us establish an important representation of the ellipsoids~$\Omega_k$
via the functions~$\ell_k$ and the test points~$x_k$. For this, let us define
$G_k \DefinedEqual \Hessian \omega_k(0)$ for each $k \geq 0$. Observe that these
operators satisfy the following simple relations (cf. \cref{upd-ell-omega}):
\begin{equation}\label{G-next}
  G_0 = B,
  \qquad
  G_{k+1} = G_k + b_k g_k g_k\Adjoint,
  \quad
  k \geq 0.
\end{equation}
Also, let us define the sequence $R_k > 0$ by the recurrence
\begin{equation}\label{R-rec}
  R_0 = R,
  \qquad
  R_{k+1}^2 = R_k^2 + ( a_k + \tfrac{1}{2} b_k U_k )^2
    \frac{\RelativeDualNorm{g_k}{G_k}^2}{1 + b_k \RelativeDualNorm{g_k}{G_k}^2},
  \quad k \geq 0.
\end{equation}

\begin{lemma}\label{lm-alg-main-ineq}%
  In \cref{alg-gen-sch}, for all $k \geq 0$, we have
  \[
    \Omega_k = \SetBuilder{x \in \VectorSpace{E}}{
      -\ell_k(x) + \tfrac{1}{2} \RelativeNorm{x - x_k}{G_k}^2
      \leq \tfrac{1}{2} R_k^2
    }.
  \]
  In particular, for all $k \geq 0$ and all $x \in \Omega_k \cap L_k^-$, we have
  $\RelativeNorm{x - x_k}{G_k} \leq R_k$.
\end{lemma}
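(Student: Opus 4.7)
The plan is to prove the identity by induction on $k$, maintaining a slightly stronger claim: that the quadratic functions themselves satisfy
\[
  \omega_k(x) = -\ell_k(x) + \tfrac{1}{2} \RelativeNorm{x - x_k}{G_k}^2
    + \tfrac{1}{2}(R^2 - R_k^2).
\]
Once this equality of quadratics is established, the stated description of $\Omega_k = \{\omega_k \leq \tfrac{1}{2} R^2\}$ follows by rearrangement, and the ``in particular'' claim is immediate: for $x \in \Omega_k \cap L_k^-$ we have $\ell_k(x) \leq 0$, so $\tfrac{1}{2}\RelativeNorm{x-x_k}{G_k}^2 \leq -\ell_k(x) + \tfrac{1}{2}\RelativeNorm{x-x_k}{G_k}^2 \leq \tfrac{1}{2} R_k^2$.

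The base case $k=0$ is a direct check from $\ell_0 \equiv 0$, $\omega_0(x) = \tfrac{1}{2}\Norm{x-x_0}^2$, $G_0 = B$, $R_0 = R$. For the inductive step, I would first record the structural observation that $\ell_k + \omega_k$ is a strictly convex quadratic with Hessian $G_k$ minimized at $x_k$; hence
\[
  \ell_k(x) + \omega_k(x) = \tfrac{1}{2}\RelativeNorm{x-x_k}{G_k}^2 + C_k,
  \qquad
  C_k \DefinedEqual \ell_k(x_k) + \omega_k(x_k),
\]
and the inductive hypothesis is exactly the statement $C_k = \tfrac{1}{2}(R^2 - R_k^2)$.

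Next, using the update rules~\cref{upd-ell-omega} and substituting $t \DefinedEqual \DualPairing{g_k}{x - x_k}$, I would expand
\[
  \ell_{k+1}(x) + \omega_{k+1}(x)
  = C_k + \tfrac{1}{2}\RelativeNorm{x-x_k}{G_k}^2
    + \tfrac{1}{2} b_k t^2 + (a_k + \tfrac{1}{2} b_k U_k) t.
\]
The quadratic-in-$x$ part has Hessian $G_k + b_k g_k g_k\Adjoint = G_{k+1}$ by~\cref{G-next}. Minimizing in $x$ and using the Sherman--Morrison identity
\[
  \DualPairing{g_k}{G_{k+1}^{-1} g_k}
  = \frac{\RelativeDualNorm{g_k}{G_k}^2}{1 + b_k \RelativeDualNorm{g_k}{G_k}^2},
\]
the minimum value is $C_k - \tfrac{1}{2}(a_k + \tfrac{1}{2} b_k U_k)^2 \DualPairing{g_k}{G_{k+1}^{-1} g_k}$, which by the recurrence~\cref{R-rec} is exactly $C_k - \tfrac{1}{2}(R_{k+1}^2 - R_k^2) = \tfrac{1}{2}(R^2 - R_{k+1}^2)$. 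Thus $C_{k+1} = \tfrac{1}{2}(R^2 - R_{k+1}^2)$, and the same computation shows $\omega_{k+1}$ has the form required by the induction, with minimizer $x_{k+1}$ and Hessian $G_{k+1}$.

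The one place requiring care is pinning down the constant term: one must identify the level-set parameter $R_{k+1}^2$ not just with what $R_{k+1}^2 - R_k^2$ contributes from the new term, but with the full accumulated constant $R^2 - R_k^2 + (\text{new term})$. This is a matter of correctly tracking $C_k$ through the induction rather than a real obstacle; Sherman--Morrison handles the inversion cleanly, and the recurrence~\cref{R-rec} was clearly designed precisely so that this bookkeeping closes.
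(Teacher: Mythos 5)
Your proposal is correct and follows essentially the same route as the paper: both write $\psi_k = \ell_k + \omega_k$ in canonical form around its minimizer $x_k$ with Hessian $G_k$, compute the drop in the minimum value at each update via the Sherman--Morrison identity for $\DualPairing{g_k}{G_{k+1}^{-1} g_k}$, and identify the accumulated constant with $\tfrac{1}{2}(R^2 - R_k^2)$ through the recurrence \cref{R-rec}. The only difference is presentational (you phrase it as an induction on the constant $C_k$, the paper telescopes the recursion for $\psi_k^*$), and your handling of the bookkeeping is sound.
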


\begin{proof}
  Let $\Map{\psi_k}{\VectorSpace{E}}{\RealField}$ be the function $\psi_k(x)
  \DefinedEqual \ell_k(x) + \omega_k(x)$. Note that $\psi_k$ is a quadratic
  function with Hessian $G_k$ and minimizer $x_k$. Hence, for any $x \in
  \VectorSpace{E}$, we have
  \begin{equation}\label{psi-canon}
    \psi_k(x) = \psi_k^* + \tfrac{1}{2} \RelativeNorm{x - x_k}{G_k}^2,
  \end{equation}
  where $\psi_k^* \DefinedEqual \min_{x \in \VectorSpace{E}} \psi_k(x)$.

  Let us compute $\psi_k^*$. Combining \cref{upd-ell-omega,psi-canon,G-next},
  for any $x \in \VectorSpace{E}$, we obtain
  \begin{equation}\label{psi-next}
    \begin{aligned}
      \psi_{k+1}(x)
      &= \psi_k(x)
        + (a_k + \tfrac{1}{2} b_k U_k) \DualPairing{g_k}{x - x_k}
        + \tfrac{1}{2} b_k \DualPairing{g_k}{x - x_k}^2 \\
      &= \psi_k^* + \tfrac{1}{2} \RelativeNorm{x - x_k}{G_k}^2
        + (a_k + \tfrac{1}{2} b_k U_k) \DualPairing{g_k}{x - x_k}
        + \tfrac{1}{2} b_k \DualPairing{g_k}{x - x_k }^2 \\
      &= \psi_k^* + \tfrac{1}{2} \RelativeNorm{x - x_k}{G_{k+1}}^2
        + (a_k + \tfrac{1}{2} b_k U_k) \DualPairing{g_k}{x - x_k},
    \end{aligned}
  \end{equation}
  Therefore,
  \begin{equation}\label{min-psi-next}
    \psi_{k+1}^*
    = \psi_k^* - \tfrac{1}{2} ( a_k + \tfrac{1}{2} b_k U_k )^2
      \RelativeDualNorm{g_k}{G_{k+1}}^2
    = \psi_k^*
    - \tfrac{1}{2} ( a_k + \tfrac{1}{2} b_k U_k )^2
      \frac{\RelativeDualNorm{g_k}{G_k}^2}{
        1 + b_k \RelativeDualNorm{g_k}{G_k}^2}.
  \end{equation}
  where the last identity follows from the fact that $G_{k+1}^{-1} g_k =
  G_k^{-1} g_k / (1 + b_k \RelativeDualNorm{g_k}{G_k}^2)$ (since $G_{k+1}
  G_k^{-1} g_k = (1 + b_k \RelativeDualNorm{g_k}{G_k}^2) g_k$ in view of
  \cref{G-next}). Since \cref{min-psi-next} is true for any $k \geq 0$ and since
  $\psi_0^* = 0$, we thus obtain, in view of \cref{R-rec}, that
  \begin{equation}\label{min-psi-via-R}
    \psi_k^* = \tfrac{1}{2} (R^2 - R_k^2).
  \end{equation}

  Let $x \in \Omega_k$ be arbitrary. Using the definition of $\psi_k(x)$ and
  \cref{min-psi-via-R}, we obtain
  \[
    -\ell_k(x) + \tfrac{1}{2} \RelativeNorm{x - x_k}{G_k}^2
    = \omega_k(x) - \psi_k^*
    = \omega_k(x) + \tfrac{1}{2} (R_k^2 - R^2).
  \]
  Thus, $x \in \Omega_k \iff \omega_k(x) \leq \frac{1}{2} R^2 \iff -\ell_k(x) +
  \frac{1}{2} \RelativeNorm{x - x_k}{G_k}^2 \leq \frac{1}{2} R_k^2$. In
  particular, for any $x \in \Omega_k \cap L_k^-$, we have $\ell_k(x) \leq 0$
  and hence $\RelativeNorm{x - x_k}{G_k} \leq R_k$.
\end{proof}

\Cref{lm-alg-main-ineq} has several consequences. First, we see that the
localizers $\Omega_k \cap L_k^-$ are contained in the ellipsoids
$\SetBuilder{x}{\RelativeNorm{x - x_k}{G_k} \leq R_k}$ whose centers are the
test points $x_k$.

Second, we get a uniform upper bound on the function $-\ell_k$ on the ellipsoid
$\Omega_k$: $-\ell_k(x) \leq \frac{1}{2} R_k^2$ for all $x \in \Omega_k$. This
observation leads us to the following definition of the \emph{sliding gap}:
\begin{equation}\label{def-sli-gap}
  \Delta_k \DefinedEqual
  \max_{x \in \Omega_k} \frac{1}{\Gamma_k} [-\ell_k(x)]
  = \max_{x \in \Omega_k} \frac{1}{\Gamma_k} \sum_{i=0}^{k-1}
    a_i \DualPairing{g_i}{x_i - x},
  \quad
  k \geq 1,
\end{equation}
provided that $\Gamma_k \DefinedEqual \sum_{i=0}^{k-1} a_i \DualNorm{g_i} > 0$.
According to our observation, we have
\begin{equation}\label{rate-sli-gap-bas}
  \Delta_k \leq \frac{R_k^2}{2 \Gamma_k}.
\end{equation}
At the same time, $\Delta_k \geq 0$ in view of \cref{lm-set-inc,oracle-asum}

Comparing the definition~\eqref{def-sli-gap} of the sliding gap $\Delta_k$ with
the definition~\eqref{def-gap} of the gap $\delta_k(a\UpperIndex{k})$ for the
semicertificate $a\UpperIndex{k} \DefinedEqual (a_0, \dots, a_{k-1})$, we see
that they are almost identical. The only difference between them is that the
solid $\Omega_k$, over which the maximum is taken in the definition of the
sliding gap, depends on the iteration counter $k$. This seems to be unfortunate
because we cannot guarantee that \emph{each} $\Omega_k$ contains the feasible
set~$Q$ (as required in the definition of gap) even if so does the initial solid
$\Omega_0 = \Ball{x_0}{R}$. However, this problem can be dealt with. Namely, in
\cref{sec-gen-acc-cert}, we will show that the semicertificate $a\UpperIndex{k}$
can be efficiently converted into another semicertificate
$\lambda\UpperIndex{k}$ for which $\delta_k(\lambda\UpperIndex{k}) \leq
\Delta_k$ when taken over the initial solid $\Omega \DefinedEqual \Omega_0$.
Thus, the sliding gap $\Delta_k$ is a meaningful measure of convergence rate of
\cref{alg-gen-sch} and it makes sense to call the coefficients $a\UpperIndex{k}$
a \emph{preliminary semicertificate}.

Let us now demonstrate that, for a suitable choice of the coefficients $a_k$ and
$b_k$ in \cref{alg-gen-sch}, we can ensure that the sliding gap $\Delta_k$
converges to zero.

\begin{remark}\label{as-g-not-0}%
  From now on, in order to avoid taking into account some trivial degenerate
  cases, it will be convenient to make the following minor technical assumption:
  \[
    \text{In \cref{alg-gen-sch}, $g_k \neq 0$ for all $k \geq 0$}.
  \]
  Indeed, when the oracle reports $g_k = 0$ for some $k \geq 0$, it usually
  means that the test point $x_k$, at which the oracle was queried, is, in fact,
  an exact solution to our problem. For example, if the standard oracle for a
  problem with convex structure has reported $g_k = 0$, we can terminate the
  method and return the certificate $\lambda \DefinedEqual (0, \dots, 0, 1)$ for
  which the residual $\epsilon_k(\lambda) = 0$.
\end{remark}

Let us choose the coefficients $a_k$ and $b_k$ in the following way:
\begin{equation}\label{def-ab}
  a_k \DefinedEqual
    \frac{\alpha_k R + \frac{1}{2} \theta \gamma R_k}{
      \RelativeDualNorm{g_k}{G_k}},
  \qquad
  b_k \DefinedEqual
    \frac{\gamma}{\RelativeDualNorm{g_k}{G_k}^2},
  \qquad
  k \geq 0,
\end{equation}
where $\alpha_k, \theta, \gamma \geq 0$ are certain coefficients to be chosen
later.

According to \cref{rate-sli-gap-bas}, to estimate the convergence rate of the
sliding gap, we need to estimate the rate of growth of the coefficients $R_k$
and $\Gamma_k$ from above and below, respectively. Let us do this.

\begin{lemma}\label{lm-Rk-ubd}%
  In \cref{alg-gen-sch} with parameters~\eqref{def-ab}, for all $k \geq 0$, we
  have
  \begin{equation}\label{Rk-ubd-0}
    R_k^2 \leq [q_c(\gamma)]^k C_k R^2,
  \end{equation}
  where $q_c(\gamma) \DefinedEqual 1 + \frac{c \gamma^2}{2 (1 + \gamma)}$, $c
  \DefinedEqual \tfrac{1}{2} (\tau + 1) (\theta + 1)^2$, $C_k \DefinedEqual 1 +
  \frac{\tau + 1}{\tau} \sum_{i=0}^{k-1} \alpha_i^2$ and $\tau > 0$ can be
  chosen arbitrarily. Moreover, if $\alpha_k = 0$ for all $k \geq 0$, then,
  $R_k^2 = [q_c(\gamma)]^k R^2$ for all $k \geq 0$ with $c \DefinedEqual
  \frac{1}{2} (\theta + 1)^2$.
\end{lemma}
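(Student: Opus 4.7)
The plan is to reduce the claim to a one-step recurrence of the form $R_{k+1}^2 \leq q_c(\gamma) R_k^2 + (\text{small})\,\alpha_k^2 R^2$ and then iterate. The key ingredient is the uniform bound $U_k \leq R_k \RelativeDualNorm{g_k}{G_k}$, which follows at once from \cref{lm-alg-main-ineq} (every $x \in \Omega_k \cap L_k^-$ satisfies $\RelativeNorm{x - x_k}{G_k} \leq R_k$) and the Cauchy-Schwarz inequality.

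First I would plug the parameter choice~\cref{def-ab} and the bound on $U_k$ into~\cref{R-rec}. Since $b_k \RelativeDualNorm{g_k}{G_k}^2 = \gamma$, the denominator collapses to the constant $1+\gamma$; since $\tfrac{1}{2} b_k U_k \leq \tfrac{1}{2}\gamma R_k / \RelativeDualNorm{g_k}{G_k}$, the bracket simplifies to
\[
  a_k + \tfrac{1}{2} b_k U_k \leq \frac{\alpha_k R + \tfrac{1}{2}(\theta+1) \gamma R_k}{\RelativeDualNorm{g_k}{G_k}}.
\]
After cancelling the $\RelativeDualNorm{g_k}{G_k}^2$ factors, this turns~\cref{R-rec} into
\[
  R_{k+1}^2 \leq R_k^2 + \frac{\bigl(\alpha_k R + \tfrac{1}{2}(\theta+1) \gamma R_k\bigr)^2}{1+\gamma}.
\]

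Next I would split the square using the Young-type inequality $(p+q)^2 \leq \tfrac{\tau+1}{\tau} p^2 + (\tau+1) q^2$ (valid for any $\tau > 0$), with $p \DefinedEqual \alpha_k R$ and $q \DefinedEqual \tfrac{1}{2}(\theta+1)\gamma R_k$. Collecting terms produces
\[
  R_{k+1}^2 \leq q_c(\gamma) R_k^2 + \frac{\tau+1}{\tau(1+\gamma)} \alpha_k^2 R^2,
\]
with $c = \tfrac{1}{2}(\tau+1)(\theta+1)^2$, matching exactly the constant appearing in $q_c(\gamma) = 1 + c\gamma^2/[2(1+\gamma)]$. The bound~\cref{Rk-ubd-0} then follows by induction on $k$: the base case $k = 0$ is $R_0^2 = R^2 = q_c(\gamma)^0 C_0 R^2$, and the inductive step uses $q_c(\gamma)^{k+1} \geq 1 \geq 1/(1+\gamma)$ to absorb the additive $\alpha_k^2 R^2$ term into $C_{k+1} = C_k + \tfrac{\tau+1}{\tau}\alpha_k^2$.

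Finally, when every $\alpha_k$ vanishes the bracket reduces to its single $R_k$-term and no $\tau$-splitting is needed; the recurrence sharpens to $R_{k+1}^2 \leq q_c(\gamma) R_k^2$ with the tighter $c = \tfrac{1}{2}(\theta+1)^2$ (formally the limit $\tau \downarrow 0$), and iterating yields the claim. The one delicate point of the whole argument is keeping track of where the slack lives: without the $\alpha_k$ term the Cauchy-Schwarz bound on $U_k$ is the sole source of inequality, whereas with $\alpha_k$ one additionally pays for the $\tau$-splitting, and both contributions have to be folded cleanly into the target form $q_c(\gamma)^k C_k R^2$.
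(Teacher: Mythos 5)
Your proof of the main estimate \cref{Rk-ubd-0} is correct and follows essentially the same route as the paper: the bound $U_k \leq R_k \RelativeDualNorm{g_k}{G_k}$ via \cref{lm-alg-main-ineq}, the collapse of the denominator to $1+\gamma$ from $b_k\RelativeDualNorm{g_k}{G_k}^2=\gamma$, the Young-type splitting with parameter $\tau$, and a final induction that is just a repackaging of the paper's divide-by-$q^{k+1}$-and-telescope step (both absorb the additive term using $q_c(\gamma)^{k+1}(1+\gamma)\geq 1$). One small point of hygiene: before squaring the bound on $a_k+\tfrac12 b_k U_k$ you should note that $U_k\geq 0$ (which holds because $x^*\in\Omega_k\cap L_k^-$ and \cref{oracle-asum}), so that both sides of the inequality being squared are nonnegative; the paper records this explicitly.

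The genuine gap is in the ``Moreover'' clause. The lemma asserts an \emph{equality} $R_k^2=[q_c(\gamma)]^k R^2$ when all $\alpha_k=0$, whereas your argument---iterating $R_{k+1}^2\leq q_c(\gamma)R_k^2$---delivers only the upper bound. You even name the culprit yourself: the bound on $U_k$ is the sole remaining source of slack. To obtain the equality you must show that this bound is attained, i.e.\ that $U_k=R_k\RelativeDualNorm{g_k}{G_k}$ exactly. The paper's argument for this is that, in this regime, $\ell_k\equiv 0$ and $L_k^-=\VectorSpace{E}$, so that by \cref{lm-alg-main-ineq} the localizer $\Omega_k\cap L_k^-=\Omega_k$ is \emph{exactly} the ellipsoid $\SetBuilder{x}{\RelativeNorm{x-x_k}{G_k}\leq R_k}$ centered at the test point; the maximum of $x\mapsto\DualPairing{g_k}{x_k-x}$ over that set equals $R_k\RelativeDualNorm{g_k}{G_k}$ with no loss. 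With that in hand, every step of the recurrence \cref{R-rec} becomes an exact identity, $R_{k+1}^2=q_c(\gamma)R_k^2$ with $c=\tfrac12(\theta+1)^2$, and the stated equality follows. Without some such argument, the equality claim remains unproved in your write-up.
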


\begin{proof}
  By the definition of $U_k$ and \cref{lm-alg-main-ineq}, we have
  \begin{equation}\label{U-ubd}
    U_k
    = \max_{x \in \Omega_k \cap L_k^-} \DualPairing{g_k}{x_k - x}
    \leq \max_{\RelativeNorm{x - x_k}{G_k} \leq R_k}
      \DualPairing{g_k}{x_k - x}
    = R_k \RelativeDualNorm{g_k}{G_k}.
  \end{equation}
  At the same time, $U_k \geq 0$ in view of \cref{lm-set-inc,oracle-asum}.
  Hence,
  \[
    \begin{aligned}
      (a_k + \tfrac{1}{2} b_k U_k)^2
        \frac{\RelativeDualNorm{g_k}{G_k}^2}{
          1 + b_k \RelativeDualNorm{g_k}{G_k}^2}
      &\leq (a_k + \tfrac{1}{2} b_k R_k \RelativeDualNorm{g_k}{G_k})^2
          \frac{\RelativeDualNorm{g_k}{G_k}^2}{
            1 + b_k \RelativeDualNorm{g_k}{G_k}^2} \\
      &= \frac{1}{1 + \gamma}
        \bigl( \alpha_k R + \tfrac{1}{2} (\theta + 1) \gamma R_k \bigr)^2,
    \end{aligned}
  \]
  where the identity follows from \cref{def-ab}. Combining this with
  \cref{R-rec}, we obtain
  \begin{equation}\label{R-ubd-bas}
    R_{k+1}^2
    \leq R_k^2 + \frac{1}{1 + \gamma}
      \bigl( \alpha_k R + \tfrac{1}{2} (\theta + 1) \gamma R_k \bigr)^2.
  \end{equation}

  Note that, for any $\xi_1, \xi_2 \geq 0$ and any $\tau > 0$, we have
  \[
    (\xi_1 + \xi_2)^2
    = \xi_1^2 + 2 \xi_1 \xi_2 + \xi_2^2
    \leq \frac{\tau + 1}{\tau} \xi_1^2 + (\tau + 1) \xi_2^2
    = (\tau + 1) \Bigl( \frac{1}{\tau} \xi_1^2 + \xi_2^2 \Bigr)
  \] 
  (look at the minimum of the right-hand side in $\tau$). Therefore, for
  arbitrary $\tau > 0$,
  \[
    R_{k+1}^2
    \leq R_k^2 + \frac{\tau + 1}{1 + \gamma} \Bigl(
      \frac{1}{\tau} \alpha_k^2 R^2
      + \tfrac{1}{4} (\theta + 1)^2 \gamma^2 R_k^2 \Bigr)
    = q R_k^2 + \beta_k R^2,
  \]
  where we denote $q \DefinedEqual q_c(\gamma) \geq 1$ and $\beta_k
  \DefinedEqual \frac{\tau + 1}{\tau (1 + \gamma)} \alpha_k^2$. Dividing both
  sides by $q^{k+1}$, we get
  \[
    \frac{R_{k+1}^2}{q^{k+1}}
    \leq \frac{R_k^2}{q^k} + \frac{\beta_k R^2}{q^{k+1}}.
  \]
  Since this is true for any $k \geq 0$, we thus obtain, in view of
  \cref{R-rec}, that
  \[
    \frac{R_k^2}{q^k}
    \leq \frac{R_0^2}{q^0} + R^2 \sum_{i=0}^{k-1} \frac{\beta_i}{q^{i+1}}
    = \biggl( 1 + \sum_{i=0}^{k-1} \frac{\beta_i}{q^{i+1}} \biggr) R^2,
  \]
  Multiplying both sides by $q^k$ and using that $\frac{\beta_i}{q^{i+1}} \leq
  \frac{\tau + 1}{\tau} \alpha_i^2$, we come to \cref{Rk-ubd-0}.

  When $\alpha_k = 0$ for all $k \geq 0$, we have $\ell_k = 0$ and $L_k^- =
  \VectorSpace{E}$ for all $k \geq 0$. Therefore, by \cref{lm-alg-main-ineq},
  $\Omega_k = \SetBuilder{x}{\RelativeNorm{x - x_k}{G_k} \leq R_k}$ and hence
  \cref{U-ubd} is, in fact, an equality. Consequently, \cref{R-ubd-bas} becomes
  $R_{k+1}^2 = R_k^2 + \frac{c \gamma^2}{2 (1 + \gamma)} R_k^2 = q_c(\gamma)
  R_k^2$, where $c \DefinedEqual \frac{1}{2} (\theta + 1)^2$.
\end{proof}

\begin{remark}%
  From the proof, one can see that the quantity $C_k$ in \cref{lm-Rk-ubd} can be
  improved up to $C_k' \DefinedEqual 1 + \frac{\tau + 1}{\tau (1 + \gamma)}
  \sum_{i=0}^{k-1} \frac{\alpha_i^2}{[q_c(\gamma)]^{i+1}}$.
\end{remark}

\begin{lemma}\label{lm-Gamma-lbd}%
  In \cref{alg-gen-sch} with parameters~\eqref{def-ab}, for all $k \geq 1$, we
  have
  \begin{equation}\label{Gamma-lbd}
    \Gamma_k \geq R \Bigl( \, \sum_{i=0}^{k-1} \alpha_i
      + \tfrac{1}{2} \theta
        \sqrt{\gamma n \bigl[ (1 + \gamma)^{k/n} - 1 \bigr]} \, \Bigr).
  \end{equation}
\end{lemma}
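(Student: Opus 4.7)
The plan is to expand $\Gamma_k$ using the formula~\eqref{def-ab} for $a_k$, separate the two contributions coming from $\alpha_i$ and from $\theta\gamma R_i$, bound each below by easy monotonicity estimates, and then extract the $(1+\gamma)^{k/n}$ factor from a determinant/trace argument on the operators $G_k$.

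More concretely, introducing the ratio $\sigma_i \DefinedEqual \DualNorm{g_i}/\RelativeDualNorm{g_i}{G_i}$, definition~\eqref{def-ab} gives
\[
  a_i\DualNorm{g_i} \;=\; \bigl(\alpha_i R + \tfrac12\theta\gamma R_i\bigr)\sigma_i,
\]
so $\Gamma_k = \sum_{i=0}^{k-1}(\alpha_i R + \tfrac12\theta\gamma R_i)\sigma_i$. The recurrence~\eqref{G-next} implies $G_i\succeq G_0=B$, hence $G_i^{-1}\preceq B^{-1}$ and $\sigma_i\geq 1$; moreover~\eqref{R-rec} gives $R_i\geq R_0 = R$. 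Combining these two monotonicity facts yields
\[
  \Gamma_k \;\geq\; R\sum_{i=0}^{k-1}\alpha_i \;+\; \tfrac12\theta\gamma R\sum_{i=0}^{k-1}\sigma_i,
\]
which already produces the first term of~\eqref{Gamma-lbd}. It remains to show that $\gamma\sum_{i=0}^{k-1}\sigma_i \geq \sqrt{\gamma n[(1+\gamma)^{k/n}-1]}$.

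For this step, the key is to examine how $G_k$ evolves. Plugging $b_i=\gamma/\RelativeDualNorm{g_i}{G_i}^2$ into~\eqref{G-next}, the matrix-determinant identity gives $\det G_{i+1} = (1+\gamma)\det G_i$, whence $\det G_k = (1+\gamma)^k$; simultaneously, taking traces and using $\Trace(g_ig_i\Adjoint) = \DualNorm{g_i}^2$ yields
\[
  \Trace G_k \;=\; n + \gamma\sum_{i=0}^{k-1}\sigma_i^2.
\]
The AM-GM inequality on the eigenvalues of $G_k$ (with respect to $B$) then gives $\Trace G_k \geq n(\det G_k)^{1/n} = n(1+\gamma)^{k/n}$, so $\sum_{i=0}^{k-1}\sigma_i^2 \geq n[(1+\gamma)^{k/n}-1]/\gamma$.

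The last step is to pass from $\sum\sigma_i^2$ to $\sum\sigma_i$. Because the $\sigma_i$ are nonnegative, the elementary inequality $\bigl(\sum_i\sigma_i\bigr)^2 \geq \sum_i\sigma_i^2$ applies, giving $\sum_{i=0}^{k-1}\sigma_i \geq \sqrt{n[(1+\gamma)^{k/n}-1]/\gamma}$, and multiplying by $\tfrac12\theta\gamma R$ produces exactly the second term of~\eqref{Gamma-lbd}. I do not foresee real obstacles here; the only point requiring a touch of care is the determinant/trace computation relative to $B$, which is handled cleanly by the conventions $\Trace G = \Trace(B^{-1}G)$ and $\det G = \det(B^{-1}G)$ introduced in the preliminaries.
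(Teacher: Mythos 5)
Your proof is correct and follows essentially the same route as the paper's: the same decomposition of $\Gamma_k$ via the ratios $\DualNorm{g_i}/\RelativeDualNorm{g_i}{G_i}$, the same lower bounds $\rho_i\geq 1$ and $R_i\geq R$, the same passage from $\sum\rho_i$ to $(\sum\rho_i^2)^{1/2}$, and the same trace/determinant telescoping with the AM--GM inequality on the eigenvalues of $G_k$ relative to $B$. Nothing to add.
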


\begin{proof}
  By the definition of $\Gamma_k$ and \cref{def-ab}, we have
  \[
    \Gamma_k
    = \sum_{i=0}^{k-1} a_i \DualNorm{g_i}
    = R \sum_{i=0}^{k-1} \alpha_i \rho_i
      + \tfrac{1}{2} \theta \gamma \sum_{i=0}^{k-1} R_i \rho_i,
  \]
  where $\rho_i \DefinedEqual \DualNorm{g_i} / \RelativeDualNorm{g_i}{G_i}$. Let
  us estimate each sum from below separately.

  For the first sum, we can use the trivial bound $\rho_i \geq 1$, which is
  valid for any $i \geq 0$ (since $G_i \succeq B$ in view of \cref{G-next}).
  This gives us $\sum_{i=0}^{k-1} \alpha_i \rho_i \geq \sum_{i=0}^{k-1}
  \alpha_i$.

  Let us estimate the second sum. According to \cref{R-rec}, for any $i \geq 0$,
  we have $R_i \geq R$. Hence, $\sum_{i=0}^{k-1} R_i \rho_i \geq R
  \sum_{i=0}^{k-1} \rho_i \geq R ( \sum_{i=0}^{k-1} \rho_i^2 )^{1/2}$ and it
  remains to lower bound $\sum_{i=0}^{k-1} \rho_i^2$. By \cref{G-next,def-ab},
  $G_0 = B$ and $G_{i+1} = G_i + \gamma g_i g_i\Adjoint /
  \RelativeDualNorm{g_i}{G_i}^2$ for all $i \geq 0$. Therefore,
  \[
    \begin{aligned}
      \sum_{i=0}^{k-1} \rho_i^2
      &= \frac{1}{\gamma} \sum_{i=0}^{k-1} ( \Trace G_{i+1} - \Trace G_i )
      = \frac{1}{\gamma} (\Trace G_k - \Trace B)
      = \frac{1}{\gamma} (\Trace G_k - n) \\
      &\geq \frac{n}{\gamma} \bigl[ (\det G_k)^{1/n} - 1 \bigr]
      = \frac{n}{\gamma} \bigl[ (1 + \gamma)^{k/n} - 1 \bigr],
    \end{aligned}
  \]
  where we have applied the arithmetic-geometric mean inequality. Combining the
  obtained estimates, we get \cref{Gamma-lbd}.
\end{proof}

\section{Main Instances of General Scheme}\label{sec-main-inst}

Let us now consider several possibilities for choosing the coefficients
$\alpha_k$, $\theta$ and $\gamma$ in \cref{def-ab}.

\subsection{Subgradient Method}\label{sec-subgrad-met}

The simplest possibility is to choose
\[
  \alpha_k > 0,
  \qquad
  \theta \DefinedEqual 0,
  \qquad
  \gamma \DefinedEqual 0.
\]
In this case, $b_k = 0$ for all $k \geq 0$, so $G_k = B$ and $\omega_k(x) =
\omega_0(x) = \frac{1}{2} \Norm{x - x_0}^2$ for all $x \in \VectorSpace{E}$ and
all $k \geq 0$ (see \cref{G-next,upd-ell-omega}). Consequently, the new test
points $x_{k+1}$ in \cref{alg-gen-sch} are generated according to the following
rule: 
\[
  x_{k+1} = \argmin_{x \in \VectorSpace{E}} \Bigl[ \,
    \sum_{i=0}^k a_i \DualPairing{g_i}{x - x_i}
    + \tfrac{1}{2} \Norm{x - x_0}^2 \Bigr],
\]
where $a_i = \alpha_i R / \DualNorm{g_i}$. Thus, \cref{alg-gen-sch} is the
Subgradient Method: $x_{k+1} = x_k - a_k g_k$.

In this example, each ellipsoid $\Omega_k$ is simply a ball: $\Omega_k =
\Ball{x_0}{R}$ for all $k \geq 0$. Hence, the sliding gap $\Delta_k$, defined in
\cref{def-sli-gap}, does not ``slide'' and coincides with the gap of the
semicertificate $a \DefinedEqual (a_0, \dots, a_{k-1})$ on the solid
$\Ball{x_0}{R}$:
\[
  \Delta_k = \max_{x \in \Ball{x_0}{R}}
    \frac{1}{\Gamma_k} \sum_{i=0}^{k-1} a_i \DualPairing{g_i}{x_i - x}.
\]

In view of \cref{lm-Rk-ubd,lm-Gamma-lbd}, for all $k \geq 1$, we have
\[
  R_k^2 \leq \Bigl(1 + \sum_{i=0}^{k-1} \alpha_i^2 \Bigr) R^2,
  \qquad
  \Gamma_k \geq R \sum_{i=0}^{k-1} \alpha_i
\]
(tend $\tau \to +\infty$ in \cref{lm-Rk-ubd}). Substituting these estimates into
\cref{rate-sli-gap-bas}, we obtain the following well-known estimate for the gap
in the Subgradient Method:
\[
  \Delta_k \leq \frac{1 + \sum_{i=0}^{k-1} \alpha_i^2}{
    2 \sum_{i=0}^{k-1} \alpha_i} R.
\]

The standard strategies for choosing the coefficients $\alpha_i$ are as follows
(see, e.g., \crefName{section}~3.2.3 in~\cite{Nesterov2018Book}):
\begin{enumerate}
  \item We fix in advance the number of iterations $k \geq 1$ of the method and
  use \emph{constant} coefficients $\alpha_i \DefinedEqual \frac{1}{\sqrt{k}}$,
  $0 \leq i \leq k - 1$. This corresponds to the so-called \emph{Short-Step}
  Subgradient Method, for which we have
  \[
    \Delta_k \leq \frac{R}{\sqrt{k}}.
  \]

  \item Alternatively,  we can use \emph{time-varying} coefficients $\alpha_i
  \DefinedEqual \frac{1}{\sqrt{i + 1}}$, $i \geq 0$. This approach does not
  require us to fix in advance the number of iterations $k$. However, the
  corresponding convergence rate estimate becomes slightly worse:
  \[
    \Delta_k \leq \frac{\ln k + 2}{2 \sqrt{k}} R.
  \]
  (Indeed, $\sum_{i=0}^{k-1} \alpha_i^2 = \sum_{i=1}^k \frac{1}{i} \leq \ln k +
  1$, while $\sum_{i=0}^{k-1} \alpha_i \geq \sqrt{k}$.)
\end{enumerate}

\begin{remark}\label{rm-subgrad-met-wo-log-fact}%
  If we allow projections onto the feasible set, then, for the resulting
  Subgradient Method with time-varying coefficients $\alpha_i$, one can
  establish the $\BigO(1/\sqrt{k})$ convergence rate for the ``truncated'' gap
  \[
    \Delta_{k_0, k}
    \DefinedEqual \max_{x \in \Ball{x_0}{R}} \frac{1}{\Gamma_{k_0, k}}
      \sum_{i=k_0}^{k} a_i \DualPairing{g_i}{x_i - x},
  \]
  where $\Gamma_{k_0, k} \DefinedEqual \sum_{i=k_0}^k a_i \DualNorm{g_i}$,
  $k_0 \DefinedEqual \Ceil{k/2}$. For more details, see \crefName{section}~5.2.1
  in~\cite{BenTalNemirovski2021BookModernCO} or \crefName{section}~3.1.1
  in~\cite{Lan2020Book}.
\end{remark}

\subsection{Standard Ellipsoid Method}\label{sec-stand-ell-met}

Another extreme choice is the following one:
\begin{equation}\label{alpha-theta-st-ell}
  \alpha_k \DefinedEqual 0,
  \qquad
  \theta \DefinedEqual 0,
  \qquad
  \gamma > 0.
\end{equation}
For this choice, we have $a_k = 0$ for all $k \geq 0$. Hence, $\ell_k = 0$ and
$L_k^- = \VectorSpace{E}$ for all $k \geq 0$. Therefore, the localizers in this
method are the following ellipsoids (see \cref{lm-alg-main-ineq}):
\begin{equation}\label{loc-st-ell}
  \Omega_k \cap L_k^- = \Omega_k
  = \SetBuilder{x \in \VectorSpace{E}}{\RelativeNorm{x - x_k}{G_k} \leq R_k},
  \qquad
  k \geq 0.
\end{equation}

Observe that, in this example, $\Gamma_k \equiv \sum_{i=0}^{k-1} a_i
\DualNorm{g_i} = 0$ for all $k \geq 1$, so there is no preliminary
semicertificate and the sliding gap is undefined. However, we can still ensure
the convergence to zero of a certain meaningful measure of optimality, namely,
the \emph{average radius} of the localizers~$\Omega_k$:
\begin{equation}\label{def-avrad}
  \AverageRadius \Omega_k \DefinedEqual (\Volume \Omega_k)^{1/n},
  \qquad
  k \geq 0.
\end{equation}
Indeed, let us define the following functions for any real $c, p > 0$:
\begin{equation}\label{def-zeta-q}
  q_c(\gamma) \DefinedEqual 1 + \frac{c \gamma^2}{2 (1 + \gamma)},
  \qquad
  \zeta_{p, c}(\gamma) \DefinedEqual \frac{[q_c(\gamma)]^p}{1 + \gamma},
  \qquad
  \gamma > 0.
\end{equation}
According to \cref{lm-Rk-ubd}, for any $k \geq 0$, we have
\begin{equation}\label{R-sqr-st-ell}
  R_k^2 = [q_{1/2}(\gamma)]^k R^2.
\end{equation}
At the same time, in view of \cref{G-next,def-ab}, $\det G_k = \prod_{i=0}^{k-1}
(1 + b_i \RelativeDualNorm{g_i}{G_i}^2) = (1 + \gamma)^k$ for all $k \geq 0$.
Combining this with \cref{loc-st-ell,def-avrad,def-zeta-q}, we obtain, for any
$k \geq 0$, that
\begin{equation}\label{std-ell-avrad-prel}
  \AverageRadius \Omega_k
  = \frac{R_k}{(\det G_k)^{1 / (2 n)}}
  = \frac{[q_{1/2}(\gamma)]^{k/2} R}{(1 + \gamma)^{k / (2 n)}}
  = [\zeta_{n, 1/2}(\gamma)]^{k / (2 n)} R.
\end{equation}

Let us now choose $\gamma$ which minimizes $\AverageRadius \Omega_k$. For such
computations, the following auxiliary result is useful (see
\cref{sec-proof-lm-min-zeta} for the proof).

\begin{lemma}\label{lm-min-zeta}%
  For any $c \geq 1/2$ and any $p \geq 2$, the function $\zeta_{p, c}$, defined
  in \cref{def-zeta-q}, attains its minimum at a unique point
  \begin{equation}\label{gamma-opt}
    \gamma_c(p) \DefinedEqual
      \frac{2}{\sqrt{c^2 p^2 - (2 c - 1)} + c p - 1}
    \in \ClosedClosedInterval[\bigg]{\frac{1}{c p}}{\frac{2}{cp}}
  \end{equation}
  with the corresponding value $\zeta_{p, c}\bigl( \gamma_c(p) \bigr) \leq e^{-1
  / (2 c p)}$.
\end{lemma}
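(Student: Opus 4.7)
The plan is to take logarithms and study $\Phi(\gamma) \DefinedEqual p\ln q_c(\gamma) - \ln(1+\gamma)$, so that $\zeta_{p,c}(\gamma) = e^{\Phi(\gamma)}$. First I would compute $q_c'(\gamma) = c\gamma(2+\gamma)/[2(1+\gamma)^2]$, set $\Phi'(\gamma) = p q_c'(\gamma)/q_c(\gamma) - 1/(1+\gamma) = 0$, and clear denominators to reduce the critical-point equation to the quadratic $c(p-1)\gamma^2 + 2(pc-1)\gamma - 2 = 0$. Since the product of its two roots is $-2/[c(p-1)] < 0$, exactly one is positive, and the quadratic formula followed by rationalizing the numerator against the conjugate of the square root puts this positive root into the claimed form $\gamma_c(p) = 2/[\sqrt{c^2 p^2 - (2c-1)} + cp - 1]$.

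To argue that this is indeed the global minimizer, I would observe that $\zeta_{p,c}(0)=1$ and $\zeta_{p,c}(\gamma)\to\infty$ as $\gamma\to\infty$ (because $q_c(\gamma)\sim c\gamma/2$, hence $\zeta_{p,c}(\gamma) \sim (c/2)^p \gamma^{p-1}$ with $p\geq 2$); a smooth function with these two properties and a single positive critical point must attain its minimum there. The bracket $\gamma_c(p) \in [1/(cp),\, 2/(cp)]$ then follows directly from the explicit formula: the upper bound is equivalent to $\sqrt{c^2 p^2 - (2c-1)} \geq 1$, i.e.\ $c p^2 \geq 2$, which holds by $c \geq 1/2$ and $p \geq 2$; the lower bound squares to $-2c \leq 2cp$, which is trivial.

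For the value bound $\zeta_{p,c}(\gamma_c(p)) \leq e^{-1/(2cp)}$, I would first use the critical-point equation to eliminate $(\gamma^*)^2$ from $q_c(\gamma^*)$. Substituting the identity $c(p-1)(\gamma^*)^2 = 2 - 2(pc-1)\gamma^*$ into $q_c(\gamma^*) = 1 + c(\gamma^*)^2/[2(1+\gamma^*)]$ and simplifying produces the clean expression
\[
  q_c(\gamma^*) = \frac{p\,[1 + (1-c)\gamma^*]}{(p-1)(1+\gamma^*)},
\]
whence
\[
  \ln\zeta_{p,c}(\gamma^*) = p\ln\tfrac{p}{p-1} + p\ln[1+(1-c)\gamma^*] - (p+1)\ln(1+\gamma^*).
\]
The remaining step is to upper bound this quantity by $-1/(2cp)$, which I would do by combining elementary bounds on the logarithm ($\ln(1+x) \leq x$, $\ln(1+x) \geq x/(1+x)$, and their refinements) with the explicit two-sided inclusion $1/(cp) \leq \gamma^* \leq 2/(cp)$ established above.

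I expect this last step to be the main obstacle: the target bound is asymptotically tight (one checks $\ln\zeta_{p,c}(\gamma_c(p)) = -1/(2cp) + O(1/p^2)$ as $p\to\infty$), so crude Bernoulli-type inequalities applied directly are too lossy. A clean route is to first handle the case $c = 1$, where the $(1-c)\gamma^*$ term vanishes and the inequality reduces to a second-difference statement for the function $t \mapsto t\ln t$, provable via the integral identity $(p+1)\ln(p+1) - 2p\ln p + (p-1)\ln(p-1) = \int_{p-1}^{p}\ln(1+1/t)\,dt \geq 1/p$; the general case is then treated by tracking the sign of $(1-c)\gamma^*$ together with the two-sided bounds on $\gamma^*$.
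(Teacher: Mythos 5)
Your derivation of the critical-point equation, the closed form for $\gamma_c(p)$, and the two-sided bound $\gamma_c(p)\in[\tfrac{1}{cp},\tfrac{2}{cp}]$ is correct and matches the paper's computation (the paper substitutes $\gamma=2/u$ where you rationalize the quadratic formula; the paper also proves convexity of $\zeta_{p,c}$ where you argue from boundary behavior --- for the latter you should additionally note that $\zeta_{p,c}'(0^+)=-1<0$, since otherwise a single critical point of a function tending to $+\infty$ could be an inflection of an increasing function whose infimum sits at the boundary and is not attained). Your closed form $q_c(\gamma^*)=\frac{p[1+(1-c)\gamma^*]}{(p-1)(1+\gamma^*)}$ checks out and is a genuinely nice simplification that the paper does not use.

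The gap is the final inequality $\zeta_{p,c}(\gamma_c(p))\leq e^{-1/(2cp)}$ for general $c$. You correctly diagnose that the bound is tight to $O(1/p^2)$, you give a workable argument for $c=1$ (reducing to $(p+1)\ln(p+1)-2p\ln p+(p-1)\ln(p-1)\geq 1/p$ --- though even there the displayed integral needs Hermite--Hadamard or a second-order expansion, since bounding the integrand by its minimum only yields $\ln(1+1/p)<1/p$), but for $c\neq 1$ you offer only ``tracking the sign of $(1-c)\gamma^*$ together with the two-sided bounds on $\gamma^*$.'' This is not a reduction to the $c=1$ case: the extra term $p\ln[1+(1-c)\gamma^*]$ contributes roughly $\frac{1-c}{c}\cdot\frac{1}{p}$, which is of exactly the same order as the target $\frac{1}{2cp}$ and of either sign, so it must be balanced precisely against $p\ln\frac{p}{p-1}-(p+1)\ln(1+\gamma^*)$ rather than estimated crudely; moreover the two-sided bounds $\gamma^*\in[\tfrac{1}{cp},\tfrac{2}{cp}]$ are too loose by a factor of $2$ to resolve a constant-critical inequality. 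The paper avoids this entirely with a different device: setting $\phi(p)=-\ln\zeta_{p,c}(\gamma_c(p))$, it uses the envelope theorem to get $\phi'(p)=-\ln q_c(\gamma_c(p))$, shows that $\chi(\tau)=\phi(1/\tau)$ is convex with $\chi(0^+)=0$ and $\chi'(0^+)=\frac{1}{2c}$, and concludes $\phi(p)\geq\frac{1}{2cp}$ uniformly in $c\geq 1/2$, $p\geq 2$. Unless you supply a comparably uniform argument for general $c$, the hardest third of the lemma remains unproved.
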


Applying \cref{lm-min-zeta} to \cref{std-ell-avrad-prel}, we see that the
optimal value of $\gamma$ is
\begin{equation}\label{gamma-st-ell}
  \gamma \DefinedEqual \gamma_{1/2}(n)
  = \frac{2}{n/2 + n/2 - 1} = \frac{2}{n-1},
\end{equation}
for which $\zeta_{n, 1/2}(\gamma) \leq e^{-1/n}$. With this choice of $\gamma$,
we obtain, for all $k \geq 0$, that
\begin{equation}\label{std-ell-avrad-final}
  \AverageRadius \Omega_k \leq e^{-k / (2 n^2)} R.
\end{equation}

One can check that \cref{alg-gen-sch} with parameters~\eqref{def-ab},
\eqref{alpha-theta-st-ell}, \eqref{gamma-st-ell} is, in fact, the standard
Ellipsoid Method (see \cref{rm-st-ell-met}).

\subsection{Ellipsoid Method with Preliminary Semicertificate}
\label{sec-ell-met-w-prel-cert}

As we have seen, we cannot measure the convergence rate of the standard
Ellipsoid Method using the sliding gap because there is no preliminary
semicertificate in this method. Let us present a modification of the standard
Ellipsoid Method which does not have this drawback but still enjoys the same
convergence rate as the original method (up to some absolute constants).

For this, let us choose the coefficients in the following way:
\begin{equation}\label{alpha-theta-ell-w-prel-cert}
  \alpha_k \DefinedEqual 0,
  \qquad
  \theta \DefinedEqual \sqrt{2} - 1 \ (\approx 0.41),
  \qquad
  \gamma > 0.
\end{equation}
Then, in view of \cref{lm-Rk-ubd}, for all $k \geq 0$, we have
\begin{equation}\label{ell-prel-cert-R-ubd}
  R_k^2 = [q_1(\gamma)]^k R^2,
\end{equation}
Also, by \cref{lm-Gamma-lbd}, $\Gamma_k \geq \frac{1}{2} \theta R \sqrt{\gamma n
[ (1 + \gamma)^{k/n} - 1 ]}$ for all $k \geq 1$. Thus, for each $k \geq 1$, we
obtain the following estimate for the sliding gap (see \cref{rate-sli-gap-bas}):
\begin{equation}\label{ell-prel-cert-Delta-bas}
  \Delta_k
  \leq \frac{[q_1(\gamma)]^k R}{
    \theta \sqrt{\gamma n [ (1 + \gamma)^{k/n} - 1 ]}}
  = \frac{1}{\theta \kappa_k(\gamma, n)} [\zeta_{2 n, 1}(\gamma)]^{k/(2 n)} R,
\end{equation}
where $\kappa_k(\gamma, n) \DefinedEqual \sqrt{\gamma n ( 1 - \frac{1}{(1 +
\gamma)^{k/n}})}$ and $\zeta_{2 n, 1}(\gamma)$ is defined in \cref{def-zeta-q}.

Note that the main factor in estimate~\eqref{ell-prel-cert-Delta-bas} is
$[\zeta_{2 n, 1}(\gamma)]^{k / (2 n)}$. Let us choose $\gamma$ by minimizing
this expression. Applying \cref{lm-min-zeta}, we obtain
\begin{equation}\label{gamma-mod}
  \gamma
  \DefinedEqual \gamma_1(2 n)
  \in \ClosedClosedInterval[\bigg]{\frac{1}{2 n}}{\frac{1}{n}}.
\end{equation}

\begin{theorem}%
  In \cref{alg-gen-sch} with parameters~\eqref{def-ab},
  \eqref{alpha-theta-ell-w-prel-cert}, \eqref{gamma-mod}, for all $k \geq 1$,
  \[
    \Delta_k \leq 6 e^{-k / (8 n^2)} R.
  \]
\end{theorem}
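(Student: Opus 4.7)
The plan is to combine the abstract estimate~\eqref{ell-prel-cert-Delta-bas} with the optimization results in \cref{lm-min-zeta}. First, I would invoke \cref{lm-min-zeta} with $c = 1$ and $p = 2n$: this simultaneously confirms the bracketing $\gamma = \gamma_1(2n) \in [1/(2n), 1/n]$ stated in~\eqref{gamma-mod} and provides the sharp inequality $\zeta_{2n, 1}(\gamma_1(2n)) \leq e^{-1/(4n)}$. Raising the latter to the power $k/(2n)$ converts it into the exponential decay $[\zeta_{2n, 1}(\gamma)]^{k/(2n)} \leq e^{-k/(8n^2)}$, which accounts for the exponential factor appearing in the target bound.

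Next, I would lower-bound the prefactor $1/(\theta \kappa_k(\gamma, n))$ in~\eqref{ell-prel-cert-Delta-bas}. Since $\kappa_k(\gamma, n)^2 = \gamma n (1 - (1+\gamma)^{-k/n})$ and $\gamma n \geq 1/2$ by the lower bound on $\gamma$, the task reduces to a lower bound on $1 - (1+\gamma)^{-k/n}$. Using the elementary estimates $\ln(1+\gamma) \geq \gamma/(1+\gamma)$ and $1 - e^{-x} \geq x/(1+x)$ for $x \geq 0$, one derives a quantitative bound of the required form; plugging in $\theta = \sqrt{2} - 1$ from~\eqref{alpha-theta-ell-w-prel-cert} (so that $1/\theta = \sqrt{2}+1$), the constants should align to yield $1/(\theta \kappa_k(\gamma, n)) \leq 6$, which combined with the exponential factor gives $\Delta_k \leq 6 e^{-k/(8n^2)} R$.

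The main obstacle is the small-$k$ regime. For $k$ much less than $n^2$, the factor $1 - (1+\gamma)^{-k/n}$ is of order only $k/n^2$, so $\kappa_k(\gamma, n)$ is small and the naive use of~\eqref{ell-prel-cert-Delta-bas} alone produces a prefactor that grows with $n$ rather than an absolute constant. Consequently, the proof must complement~\eqref{ell-prel-cert-Delta-bas} either by sharpening the bound $\Delta_k \leq R_k^2/(2\Gamma_k)$ from~\eqref{rate-sli-gap-bas}---for instance, by exploiting that $-\ell_k$ is a \emph{linear} functional on the ellipsoid $\Omega_k$ and applying Cauchy--Schwarz together with \cref{lm-alg-main-ineq} rather than the crude envelope $-\ell_k \leq R_k^2/2$---or by establishing a separate trivial bound $\Delta_k \leq c R$ valid uniformly in $k$, which then dominates in the range where the exponential factor is close to $1$. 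The specific value $\theta = \sqrt{2} - 1$ in~\eqref{alpha-theta-ell-w-prel-cert} is calibrated precisely so that reconciling the small- and large-$k$ regimes produces the clean absolute constant $6$.
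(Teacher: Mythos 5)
Your proposal matches the paper's proof: for $k \geq n^2$ the paper uses \eqref{ell-prel-cert-Delta-bas} with $\zeta_{2n,1}(\gamma)\leq e^{-1/(4n)}$ from \cref{lm-min-zeta} and shows $\theta\kappa_k(\gamma,n)\geq\frac{\sqrt2-1}{\sqrt6}\geq\frac16$ via $(1+\gamma)^{k/n}\geq 1+\gamma n$ and $\gamma n\geq\frac12$, and for $k\leq n^2$ it establishes exactly the uniform bound you anticipate ($\Delta_k\leq 5R\leq 6e^{-1/8}R\leq 6e^{-k/(8n^2)}R$), proved by the Cauchy--Schwarz/triangle-inequality mechanism you describe in your option~(a): $\DualPairing{g_i}{x_i-x}\leq\DualNorm{g_i}\RelativeNorm{x_i-x}{G_i}$ with $\RelativeNorm{x_i-x}{G_i}\leq 3R_k\leq 3e^{1/4}R$ from \cref{lm-set-inc,lm-alg-main-ineq}. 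So your two ``alternatives'' for the small-$k$ regime are in fact one and the same argument, and the only gap is that you have not carried out the explicit constant computation.
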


\begin{proof}
  \ProofPart

  Suppose $k \geq n^2$. According to \cref{lm-min-zeta}, we have $\zeta_{2 n,
  1}(\gamma) \leq e^{-1 / (4 n)}$. Hence, by \cref{ell-prel-cert-Delta-bas},
  $\Delta_k \leq \frac{1}{\theta \kappa_k(\gamma, n)} e^{-k / (8 n^2)} R$. It
  remains to estimate from below $\theta \kappa_k(\gamma, n)$.
  
  Since $k \geq n^2$, we have $(1 + \gamma)^{k/n} \geq (1 + \gamma)^n \geq 1 +
  \gamma n$. Hence, $\kappa_k(\gamma, n) \geq \frac{\gamma n}{\sqrt{1 + \gamma
  n}}$. Note that the function $\tau \mapsto \frac{\tau}{\sqrt{1 + \tau}}$ is
  increasing on $\NonnegativeRay$. Therefore, using
  \cref{gamma-mod}, we obtain $\kappa_k(\gamma, n) \geq \frac{1 / 2}{\sqrt{1 + 1
  / 2}} = \frac{1}{\sqrt{6}}$. Thus, $\theta \kappa_k(\gamma, n) \geq
  \frac{\sqrt{2} - 1}{\sqrt{6}} \geq \frac{1}{6}$ for our choice of $\theta$.

  \ProofPart

  Now suppose $k \leq n^2$. Then, $6 e^{-k / (8 n^2)} \geq 6 e^{-1/8} \geq 5$.
  Therefore, it suffices to prove that $\Delta_k \leq 5 R$ or, in view of
  \cref{def-sli-gap}, that $\DualPairing{g_i}{x_i - x} \leq 5 R \DualNorm{g_i}$,
  where $x \in \Omega_k \cap L_k^-$ and $0 \leq i \leq k-1$ are arbitrary. Note
  that $\DualPairing{g_i}{x_i - x} \leq \RelativeDualNorm{g_i}{G_i}
  \RelativeNorm{x_i - x}{G_i} \leq \DualNorm{g_i} \RelativeNorm{x_i - x}{G_i}$
  since $G_i \succeq B$ (see \cref{G-next}). Hence, it remains to prove that
  $\RelativeNorm{x_i - x}{G_i} \leq 5 R$.

  Recall from \cref{G-next,R-rec} that $G_i \preceq G_k$ and $R_i \leq R_k$.
  Therefore,
  \[
    \begin{aligned}
      \RelativeNorm{x_i - x}{G_i}
      &\leq \RelativeNorm{x_i - x^*}{G_i} + \RelativeNorm{x^* - x}{G_i}
      \leq \RelativeNorm{x_i - x^*}{G_i} + \RelativeNorm{x^* - x}{G_k} \\
      &\leq \RelativeNorm{x_i - x^*}{G_i} + \RelativeNorm{x_k - x^*}{G_k}
        + \RelativeNorm{x_k - x}{G_k}
      \leq R_i + 2 R_k \leq 3 R_k,
    \end{aligned}
  \]
  where the penultimate inequality follows from
  \cref{lm-set-inc,lm-alg-main-ineq}. According to \cref{ell-prel-cert-R-ubd},
  $R_k = [q_1(\gamma)]^{k/2} R \leq [q_1(\gamma)]^{n^2/2} R$ (recall that
  $q_1(\gamma) \geq 1$). Thus, it remains to show that $3 [q_1(\gamma)]^{n^2/2}
  \leq 5$. But this is immediate. Indeed, by \cref{def-zeta-q,gamma-mod}, we
  have $[q_1(\gamma)]^{n^2/2} \leq e^{n^2 \gamma^2 / (4 (1 + \gamma))} \leq
  e^{1/4}$, so $3 [q_1(\gamma)]^{n^2/2} \leq 3 e^{1/4} \leq 5$.
\end{proof}

\subsection{Subgradient Ellipsoid Method}
\label{sec-subgrad-ell-met}

The previous algorithm still shares the drawback of the original Ellipsoid
Method, namely, it does not work when $n \to \infty$. To eliminate this
drawback, let us choose $\alpha_k$ similarly to how this is done in the
Subgradient Method.

Consider the following choice of parameters:
\begin{equation}\label{subgrad-ell-params}
  \alpha_i \DefinedEqual \beta_i \sqrt{\frac{\theta}{\theta + 1}},
  \qquad
  \theta \DefinedEqual \sqrt[3]{2} - 1 \ (\approx 0.26),
  \qquad
  \gamma \DefinedEqual \gamma_1(2 n)
    \in \ClosedClosedInterval[\bigg]{\frac{1}{2 n}}{\frac{1}{n}},
\end{equation}
where $\beta_i > 0$ are certain coefficients (to be specified later) and
$\gamma_1(2 n)$ is defined in \cref{gamma-opt}.

\begin{theorem}%
  In \cref{alg-gen-sch} with parameters~\eqref{def-ab},
  \eqref{subgrad-ell-params}, where $\beta_0 \geq 1$, we have, for all $k \geq
  1$,
  \begin{equation}\label{rate-subg-ell}
    \Delta_k \leq
    \begin{cases}
      \frac{2}{\sum_{i=0}^{k-1} \beta_i}
        ( 1 + \sum_{i=0}^{k-1} \beta_i^2 ) R,
        & \text{if $k \leq n^2$}, \\
      6 e^{-k / (8 n^2)} ( 1 + \sum_{i=0}^{k-1} \beta_i^2 ) R,
        & \text{if $k \geq n^2$}.
    \end{cases}
  \end{equation}
\end{theorem}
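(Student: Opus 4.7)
The plan is to start from the basic inequality $\Delta_k \leq R_k^2/(2\Gamma_k)$ established in \cref{rate-sli-gap-bas} and feed it with the upper bound from \cref{lm-Rk-ubd} and the lower bound from \cref{lm-Gamma-lbd}. The choices made in \cref{subgrad-ell-params} have been engineered precisely so that both resulting expressions collapse to clean form.

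For the numerator I will apply \cref{lm-Rk-ubd} with the specific choice $\tau \DefinedEqual \theta$. Since $\theta = \sqrt[3]{2}-1$ satisfies $(\theta+1)^3 = 2$, the constant $c = \tfrac{1}{2}(\tau+1)(\theta+1)^2 = \tfrac{1}{2}(\theta+1)^3$ equals $1$, and simultaneously the combination $\frac{\tau+1}{\tau}\cdot \frac{\theta}{\theta+1}$ that multiplies $\sum\beta_i^2$ (recall $\alpha_i^2 = \beta_i^2\,\theta/(\theta+1)$) also equals $1$. This gives
\[
  R_k^2 \leq [q_1(\gamma)]^k \Bigl( 1 + \sum_{i=0}^{k-1}\beta_i^2 \Bigr) R^2.
\]
From \cref{lm-Gamma-lbd} I extract the two additive pieces
\[
  \Gamma_k \geq R\sqrt{\theta/(\theta+1)}\sum_{i=0}^{k-1}\beta_i
    + \tfrac{1}{2}\theta R\sqrt{\gamma n[(1+\gamma)^{k/n}-1]},
\]
and will retain only one of them in each case.

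For $k \leq n^2$, I bound $[q_1(\gamma)]^k \leq [q_1(\gamma)]^{n^2}$. Using $\gamma \leq 1/n$ from \cref{gamma-mod}, the exponent $n^2\gamma^2/(2(1+\gamma))$ is at most $1/2$, so $[q_1(\gamma)]^{n^2} \leq e^{1/2}$. Dropping the second nonnegative term in $\Gamma_k$ and keeping only the Subgradient-style piece yields a ratio of the form $\text{const}\cdot(1+\sum\beta_i^2) R/\sum\beta_i$, and I simply verify numerically that $\tfrac{1}{2}e^{1/2}\sqrt{(\theta+1)/\theta} \leq 2$.

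For $k \geq n^2$, I instead factor
\[
  [q_1(\gamma)]^k = (1+\gamma)^{k/(2n)}\,[\zeta_{2n,1}(\gamma)]^{k/(2n)}
  \leq (1+\gamma)^{k/(2n)}\,e^{-k/(8n^2)},
\]
using the definition of $\zeta_{2n,1}$ in \cref{def-zeta-q} and the bound $\zeta_{2n,1}(\gamma) \leq e^{-1/(4n)}$ from \cref{lm-min-zeta}. For $\Gamma_k$ I now drop the first term and rewrite the second as $\tfrac{1}{2}\theta R(1+\gamma)^{k/(2n)}\kappa_k(\gamma,n)$ with $\kappa_k(\gamma,n) = \sqrt{\gamma n(1-(1+\gamma)^{-k/n})}$, so that the $(1+\gamma)^{k/(2n)}$ factors cancel in $R_k^2/(2\Gamma_k)$. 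The lower bound $\kappa_k(\gamma,n) \geq 1/\sqrt{6}$ for $k \geq n^2$ -- already isolated in the proof of the previous theorem via Bernoulli's inequality on $(1+\gamma)^n$ -- produces the desired exponential rate, and a numerical check on $1/(\theta\kappa_k(\gamma,n))$ closes the case.

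The delicate point is the balancing of $\theta$: smaller $\theta$ improves the Subgradient half (case $k \leq n^2$) but degrades the Ellipsoid half (case $k \geq n^2$) through the factor $1/\theta$. The specific value $\theta = \sqrt[3]{2}-1$ is exactly the point at which the algebraic identity $(\theta+1)^3 = 2$ simultaneously forces $c = 1$ (so that \cref{lm-min-zeta} at $p = 2n$ applies with no slack) and $\frac{\tau+1}{\tau}\cdot \frac{\theta}{\theta+1} = 1$ for $\tau = \theta$ (so the $\beta_i^2$ accumulation comes out clean); everything else in the proof is routine case-by-case accounting.
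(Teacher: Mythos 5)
Your treatment of the case $k \leq n^2$ is correct and coincides with the paper's: same choice $\tau = \theta$ in \cref{lm-Rk-ubd}, same observation that $(\theta+1)^3 = 2$ makes $c = 1$ and $C_k = 1 + \sum_{i=0}^{k-1}\beta_i^2$, same bound $[q_1(\gamma)]^{n^2} \leq \sqrt{e}$, and the same numerical check $\tfrac{1}{2}\sqrt{e(\theta+1)/\theta} \leq 2$.

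The case $k \geq n^2$, however, has a genuine gap: your route does not produce the constant $6$. Dropping the first term of $\Gamma_k$ and cancelling the $(1+\gamma)^{k/(2n)}$ factors gives $\Delta_k \leq \frac{1}{\theta\,\kappa_k(\gamma,n)}\,[\zeta_{2n,1}(\gamma)]^{k/(2n)} C_k R$, and with $\kappa_k(\gamma,n) \geq 1/\sqrt{6}$ and $\theta = \sqrt[3]{2}-1 \approx 0.26$ the prefactor is $\frac{1}{\theta\kappa_k} \leq \frac{\sqrt{6}}{\theta} \approx 9.43$, not $6$. (This is essentially tight along your route: for large $n$ one has $\gamma n \to 1/2$, so $\kappa_k \to 1/\sqrt{6}$ cannot be improved.) The constant $6$ works in the previous theorem only because there $\theta = \sqrt{2}-1 \approx 0.41$. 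The tell is that your argument never uses the hypothesis $\beta_0 \geq 1$, which is stated in the theorem precisely to rescue this case. The paper's proof keeps \emph{both} terms of the lower bound on $\Gamma_k$: since $\beta_0 \geq 1$ and $\tfrac{1}{2}\theta\sqrt{\gamma n} \leq \tfrac{1}{2}\theta \leq \sqrt{\theta/(\theta+1)}$, the first term dominates $\tfrac{1}{2}R\theta\sqrt{\gamma n}$, so
\[
  \Gamma_k \geq \tfrac{1}{2}R\theta\sqrt{\gamma n}\Bigl(1 + \sqrt{(1+\gamma)^{k/n}-1}\Bigr)
  \geq \tfrac{1}{2}R\theta\sqrt{\gamma n}\,(1+\gamma)^{k/(2n)}
  \geq \tfrac{1}{12}R\,(1+\gamma)^{k/(2n)},
\]
using $1 + \sqrt{t-1} \geq \sqrt{t}$ and $\gamma n \geq 1/2$. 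This replaces the lossy $\kappa_k$ bound entirely, yields the constant $6$, and as a bonus proves the second estimate for all $k \geq 1$ rather than only $k \geq n^2$. To repair your proof, either adopt this step or weaken the claimed constant.
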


\begin{proof}
  Applying \cref{lm-Rk-ubd} with $\tau \DefinedEqual \theta$ and using
  \cref{subgrad-ell-params}, we obtain
  \begin{equation}\label{R-ubd-subg-ell}
    R_k^2 \leq [q_1(\gamma)]^k C_k R^2,
    \qquad
    C_k = 1 + \sum_{i=0}^{k-1} \beta_i^2.
  \end{equation}
  At the same time, by \cref{lm-Gamma-lbd}, we have
  \begin{equation}\label{Gamma-lbd-bas-subg-ell}
    \Gamma_k \geq R \Bigl(
      \sqrt{\frac{\theta}{\theta + 1}} \sum_{i=0}^{k-1} \beta_i
      + \tfrac{1}{2} \theta \sqrt{\gamma n [(1 + \gamma)^{k/n} - 1]}
    \, \Bigr).
  \end{equation}

  Note that $\frac{1}{2} \theta \sqrt{\gamma n} \leq \frac{1}{2} \theta \leq
  \sqrt{\theta / (\theta + 1)}$ by \cref{subgrad-ell-params}. Since $\beta_0
  \geq 1$, we thus obtain
  \begin{equation}\label{Gamma-lbd-1-subg-ell}
    \begin{aligned}
      \Gamma_k
      &\geq \tfrac{1}{2} R \theta \sqrt{\gamma n} \Bigl(
        1 + \sqrt{(1 + \gamma)^{k/n} - 1} \, \Bigr)
      \geq \tfrac{1}{2} R \theta \sqrt{\gamma n} (1 + \gamma)^{k / (2 n)} \\
      &\geq \tfrac{1}{2 \sqrt{2}} R \theta (1 + \gamma)^{k / (2 n)}
      \geq \tfrac{1}{12} R (1 + \gamma)^{k / (2 n)},
    \end{aligned}
  \end{equation}
  where the last two inequalities follow from \cref{subgrad-ell-params}.
  Therefore, by \cref{rate-sli-gap-bas,R-ubd-subg-ell,Gamma-lbd-1-subg-ell},
  \[
    \Delta_k
    \leq \frac{R_k^2}{2 \Gamma_k}
    \leq 6 \frac{[q_1(\gamma)]^k}{(1 + \gamma)^{k/(2n)}} C_k R
    = 6 [\zeta_{2 n, 1}(\gamma)]^{k/(2n)} C_k R,
  \]
  where $\zeta_{2n, 1}(\gamma)$ is defined in \cref{def-zeta-q}. Observe that,
  for our choice of $\gamma$, by \cref{lm-min-zeta}, we have $\zeta_{2 n,
  1}(\gamma) \leq e^{-1 / (4 n)}$. This proves the second estimate\footnote{%
    In fact, we have proved the second estimate in \cref{rate-subg-ell} for all
    $k \geq 1$ (not only for $k \geq n^2$).%
  } %
  in \cref{rate-subg-ell}.

  On the other hand, dropping the second term in \cref{Gamma-lbd-bas-subg-ell},
  we can write
  \begin{equation}\label{Gamma-lbd-2-subg-ell}
    \Gamma_k \geq R \sqrt{\frac{\theta}{\theta + 1}} \sum_{i=0}^{k-1} \beta_i.
  \end{equation}
  Suppose $k \leq n^2$. Then, from \cref{def-zeta-q,subgrad-ell-params}, it
  follows that
  \[
    [q_1(\gamma)]^k
    \leq [q_1(\gamma)]^{n^2}
    \leq e^{\gamma^2 n^2 / (2 (1 + \gamma))}
    \leq \sqrt{e}.
  \]
  Hence, by \cref{R-ubd-subg-ell}, $R_k \leq \sqrt{e} C_k R^2$. Combining this
  with \cref{rate-sli-gap-bas,Gamma-lbd-2-subg-ell}, we obtain
  \[
    \Delta_k
    \leq \frac{1}{2} \sqrt{\frac{e (\theta + 1)}{\theta}}
      \frac{1}{\sum_{i=0}^{k-1} \beta_i} C_k R.
  \]
  By numerical evaluation, one can verify that, for our choice of $\theta$, we
  have $\frac{1}{2} \sqrt{\frac{e (\theta + 1)}{\theta}} \leq 2$. This proves
  the first estimate in \cref{rate-subg-ell}.
\end{proof}

Exactly as in the Subgradient Method, we can use the following two strategies
for choosing the coefficients $\beta_i$:
\begin{enumerate}
  \item We fix in advance the number of iterations $k \geq 1$ of the method and
  use constant coefficients $\beta_i \DefinedEqual \frac{1}{\sqrt{k}}$, $0 \leq
  i \leq k-1$. In this case,
  \begin{equation}\label{subgrad-ell-final-est}
    \Delta_k \leq \begin{cases}
      4 R / \sqrt{k} & \text{if $k \leq n^2$}, \\
      12 R e^{-k / (8 n^2)} & \text{if $k \geq n^2$}.
    \end{cases}
  \end{equation}

  \item We use time-varying coefficients $\beta_i \DefinedEqual \frac{1}{\sqrt{i
  + 1}}$, $i \geq 0$. In this case,
  \[
    \Delta_k \leq \begin{cases}
      2 (\ln k + 2) R / \sqrt{k} & \text{if $k \leq n^2$}, \\
      6 (\ln k + 2) R e^{-k / (8 n^2)} & \text{if $k \geq n^2$}.
    \end{cases}
  \]
\end{enumerate}

Let us discuss convergence rate estimate~\eqref{subgrad-ell-final-est}. Up to
absolute constants, this estimate is exactly the same as in the Subgradient
Method when $k \leq n^2$ and as in the Ellipsoid Method when $k \geq n^2$. In
particular, when $n \to \infty$, we recover the convergence rate of the
Subgradient Method.

To provide a better interpretation of the obtained results, let us compare the
convergence rates of the Subgradient and Ellipsoid methods:
\[
  \begin{aligned}
    \text{Subgradient Method:}& \qquad
      1 / \sqrt{k} \\
    \text{Ellipsoid Method:}& \qquad
      e^{-k / (2 n^2)}.
  \end{aligned}
\]
To compare these rates, let us look at their squared ratio:
\[
  \rho_k \DefinedEqual \Bigl( \frac{1/\sqrt{k}}{e^{-k/(2 n^2)}} \Bigr)^2
  = \frac{e^{k / n^2}}{k}.
\]
Let us find out for which values of $k$ the rate of the Subgradient Method is
better than that of the Ellipsoid Method and vice versa. We assume that $n \geq
2$.

Note that the function $\tau \mapsto e^{\tau} / \tau$ is strictly decreasing on
$\OpenClosedInterval{0}{1}$ and strictly increasing on
$\ClosedOpenInterval{1}{+\infty}$ (indeed, its derivative equals $e^\tau (\tau -
1) / \tau^2$). Hence, $\rho_k$ is strictly decreasing in $k$ for $1 \leq k \leq
n^2$ and strictly increasing in $k$ for $k \geq n^2$. Since $n \geq 2$, we have
$\rho_2 = e^{2 / n^2} / 2 \leq e^{1/2} / 2 \leq 1$. At the same time, $\rho_k
\to +\infty$ when $k \to \infty$. Therefore, there exists a unique integer $K_0
\geq 2$ such that $\rho_k \leq 1$ for all $k \leq K_0$ and $\rho_k \geq 1$ for
all $k \geq K_0$.

Let us estimate $K_0$. Clearly, for any $n^2 \leq k \leq n^2 \ln(2 n)$, we have
\[
  \rho_k \leq \frac{e^{n^2 \ln(2 n) / n^2}}{n^2 \ln(2 n)}
  = \frac{2}{n \ln(2 n)} \leq 1.
\]
while, for any $k \geq 3 n^2 \ln(2 n)$, we have
\[
  \rho_k \geq \frac{e^{3 n^2 \ln(2 n)} / n^2}{3 n^2 \ln(2 n)}
  = \frac{(2 n)^3}{3 n^2 \ln(2 n)}
  = \frac{8 n}{3 \ln(2 n)} \geq 1.
\]
Hence,
\[
  n^2 \ln(2 n) \leq K_0 \leq 3 n^2 \ln(2 n).
\]
Thus, up to an absolute constant, $n^2 \ln(2 n)$ is the switching moment,
starting from which the rate of the Ellipsoid Method becomes better than that of
the Subgradient Method.

Returning to our obtained estimate~\eqref{subgrad-ell-final-est}, we see that,
ignoring absolute constants and ignoring the ``small'' region of the values of
$k$ between $n^2$ and $n^2 \ln n$, our convergence rate is basically the best of
the corresponding convergence rates of the Subgradient and Ellipsoid methods.

\section{Constructing Accuracy Semicertificate}\label{sec-gen-acc-cert}

Let us show how to convert a preliminary accuracy semicertificate, produced by
\cref{alg-gen-sch}, into a semicertificate whose gap on the initial solid is
upper bounded by the sliding gap. The key ingredient here is the following
auxiliary algorithm which was first proposed in
\cite{Nemirovski2010AccCert} for building accuracy certificates in
the standard Ellipsoid Method.

\subsection{Augmentation Algorithm}
\label{sec-augm-alg}

Let $k \geq 0$ be an integer and let $Q_0, \dots, Q_k$ be solids in
$\VectorSpace{E}$ such that
\begin{equation}\label{augm-alg-inc}
  \hat{Q}_i \DefinedEqual \SetBuilder{x \in Q_i}{
    \DualPairing{g_i}{x - x_i} \leq 0} \subseteq Q_{i+1},
  \qquad 0 \leq i \leq k-1,
\end{equation}
where $x_i \in \VectorSpace{E}$, $g_i \in \VectorSpace{E}\Dual$. Further,
suppose that, for any $s \in \VectorSpace{E}\Dual$ and any $0 \leq i \leq k-1$,
we can compute a \emph{dual multiplier} $\mu \geq 0$ such that
\begin{equation}\label{augm-alg-dual-mult}
  \max_{x \in \hat{Q}_i} \DualPairing{s}{x}
  = \max_{x \in Q_i} [
      \DualPairing{s}{x} + \mu \DualPairing{g_i}{x_i - x} ]
\end{equation}
(provided that certain regularity conditions hold). Let us abbreviate any
solution $\mu$ of this problem by $\mu(s, Q_i, x_i, g_i)$.

Consider now the following routine.

\begin{SimpleAlgorithm}[%
  title={Augmentation Algorithm},%
  label=alg-augm,%
  width=0.53\linewidth%
]%
  \begin{AlgorithmGroup}[Input]
    $s_k \in \VectorSpace{E}\Dual$.
  \end{AlgorithmGroup}
  \begin{AlgorithmGroup}[Iterate for $i = k-1, \dots, 0$]
    \begin{AlgorithmSteps}
      \AlgorithmStep
        Compute $\mu_i \DefinedEqual \mu(s_{i+1}, Q_i, x_i, g_i)$.
      \AlgorithmStep
        Set $s_i \DefinedEqual s_{i+1} - \mu_i g_i$.
    \end{AlgorithmSteps}
  \end{AlgorithmGroup}
\end{SimpleAlgorithm}

\begin{lemma}\label{lm-ext-cert}%
  Let $\mu_0, \dots, \mu_{k-1} \geq 0$ be generated by \cref{alg-augm}. Then,
  \[
    \max_{x \in Q_0} \Bigl[
      \DualPairing{s_k}{x}
      + \sum_{i=0}^{k-1} \mu_i \DualPairing{g_i}{x_i - x} \Bigr]
    \leq \max_{x \in Q_k} \DualPairing{s_k}{x}.
  \]
\end{lemma}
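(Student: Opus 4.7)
The plan is to prove the lemma by a telescoping argument set up as a backward chain of inequalities. For each $0 \leq j \leq k$, I would introduce the auxiliary quantity
\[
  M_j \DefinedEqual \max_{x \in Q_j} \DualPairing{s_j}{x}
    + \sum_{i=j}^{k-1} \mu_i \DualPairing{g_i}{x_i},
\]
so that the desired inequality reduces to $M_0 \leq M_k$. Clearly $M_k = \max_{x \in Q_k} \DualPairing{s_k}{x}$, which is the right-hand side of the claim. For $M_0$, note that the update rule $s_i \DefinedEqual s_{i+1} - \mu_i g_i$ telescopes to $s_k = s_0 + \sum_{i=0}^{k-1} \mu_i g_i$; substituting this into the left-hand side of the lemma makes the linear-in-$x$ terms involving $\mu_i g_i$ cancel, leaving exactly $\max_{x \in Q_0} \DualPairing{s_0}{x} + \sum_{i=0}^{k-1} \mu_i \DualPairing{g_i}{x_i} = M_0$.

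The core step is then to verify the monotonicity $M_j \leq M_{j+1}$ for $0 \leq j \leq k-1$. Substituting $s_j = s_{j+1} - \mu_j g_j$ inside the maximum defining $M_j$ and regrouping the terms would produce
\[
  M_j = \max_{x \in Q_j} \bigl[\DualPairing{s_{j+1}}{x}
    + \mu_j \DualPairing{g_j}{x_j - x}\bigr]
    + \sum_{i=j+1}^{k-1} \mu_i \DualPairing{g_i}{x_i}.
\]
The defining property \cref{augm-alg-dual-mult} of the dual multiplier $\mu_j = \mu(s_{j+1}, Q_j, x_j, g_j)$ identifies the inner maximum with $\max_{x \in \hat{Q}_j} \DualPairing{s_{j+1}}{x}$, and the inclusion $\hat{Q}_j \subseteq Q_{j+1}$ from \cref{augm-alg-inc} permits enlarging the feasible set to $Q_{j+1}$. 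Combining these two observations gives $M_j \leq M_{j+1}$.

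Chaining the resulting inequalities from $j = 0$ up to $j = k-1$ yields $M_0 \leq M_k$, which is exactly the claim. I do not anticipate a real technical obstacle here: the entire argument consists of algebraic rearrangement followed by one set-inclusion step per index. The only implicit ingredient is the strong-duality-type equality built into \cref{augm-alg-dual-mult}, under whichever regularity conditions make the dual multiplier $\mu_j$ well defined; since this is part of the assumed setup of \cref{alg-augm}, it can simply be invoked as a black box at each step.
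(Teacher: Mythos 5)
Your proposal is correct and follows essentially the same route as the paper: the paper establishes, for each $i$, the inequality $\max_{x \in Q_{i+1}} \DualPairing{s_{i+1}}{x} \geq \max_{x \in Q_i} \DualPairing{s_i}{x} + \mu_i \DualPairing{g_i}{x_i}$ using exactly the two ingredients you identify (the dual-multiplier equality \cref{augm-alg-dual-mult} and the inclusion \cref{augm-alg-inc}), then sums over $i$ and telescopes $s_0 = s_k - \sum_{i=0}^{k-1} \mu_i g_i$. Packaging this as the monotone sequence $M_j$ is only a cosmetic reorganization of the same argument.
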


\begin{proof}
  Indeed, at every iteration $i = k-1, \dots, 0$, we have
  \[
    \max_{x \in Q_{i+1}} \DualPairing{s_{i+1}}{x}
    \geq \max_{x \in \hat{Q}_i} \DualPairing{s_{i+1}}{x}
    = \max_{x \in Q_i} [ \DualPairing{s_{i+1}}{x}
      + \mu_i \DualPairing{g_i}{x_i - x} ]
    = \max_{x \in Q_i} \DualPairing{s_i}{x}
      + \mu_i \DualPairing{g_i}{x_i}.
  \]
  Summing up these inequalities for $i = 0, \dots, k-1$, we obtain
  \[
    \max_{x \in Q_k} \DualPairing{s_k}{x}
    \geq \max_{x \in Q_0} \DualPairing{s_0}{x}
      + \sum_{i=0}^{k-1} \mu_i \DualPairing{g_i}{x_i}
    = \max_{x \in Q_0} \Bigl[ \DualPairing{s_k}{x}
      + \sum_{i=0}^{k-1} \DualPairing{g_i}{x_i - x} \Bigr],
  \]
  where the identity follows from the fact that $s_0 = s_k - \sum_{i=0}^{k-1}
  \mu_i g_i$.
\end{proof}

\subsection{Methods with Preliminary Certificate}

Let us apply the Augmentation Algorithm for building an accuracy semicertificate
for \cref{alg-gen-sch}. We only consider those instances for which $\Gamma_k
\DefinedEqual \sum_{i=0}^{k-1} a_i \DualNorm{g_i} > 0$ so that the sliding gap
$\Delta_k$ is well-defined:
\[
  \Delta_k
  \DefinedEqual \max_{x \in \Omega_k} \frac{1}{\Gamma_k} [ -\ell_k(x) ]
  = \max_{x \in \Omega_k \cap L_k^-} \frac{1}{\Gamma_k} [ -\ell_k(x) ]
  = \max_{x \in \Omega_k \cap L_k^-}
    \frac{1}{\Gamma_k} \sum_{i=0}^{k-1} a_i \DualPairing{g_i}{x_i - x}.
\]
Recall that the vector $a \DefinedEqual (a_0, \dots, a_{k-1})$ is called a
preliminary semicertificate.

For technical reasons, it will be convenient to add the following termination
criterion into \cref{alg-gen-sch}:
\begin{equation}\label{term-U}
  \text{
    Terminate \cref{alg-gen-sch} at \cref{alg:NewEllipsoidMethod:ComputeU}
    if $U_k \leq \delta \DualNorm{g_k}$,
  }
\end{equation}
where $\delta > 0$ is a fixed constant. Depending on whether this termination
criterion has been satisfied at iteration $k$, we call it a \emph{terminal} or
\emph{nonterminal} iteration, respectively.

Let $k \geq 1$ be an iteration of \cref{alg-gen-sch}. According to
\cref{lm-set-inc}, the sets $Q_i \DefinedEqual \Omega_i \cap L_i^-$ satisfy
\cref{augm-alg-inc}. Since the method has not been terminated during the course
of the previous iterations, we have\footnote{%
  Recall that $g_i \neq 0$ for all $i \geq 0$ by \cref{as-g-not-0}.%
} %
$U_i > 0$ for all $0 \leq i \leq k-1$. Therefore, for any $0 \leq i \leq k-1$,
there exists $x \in Q_i$ such that $\DualPairing{g_i}{x - x_i} < 0$. This
guarantees the existence of dual multiplier in \cref{augm-alg-dual-mult}.

Let us apply \cref{alg-augm} to $s_k \DefinedEqual -\sum_{i=0}^{k-1} a_i g_i$ in
order to obtain dual multipliers $\mu \DefinedEqual (\mu_0, \dots, \mu_{k-1})$.
From \cref{lm-ext-cert}, it follows that
\[
  \max_{x \in \Ball{x_0}{R}}
    \sum_{i=0}^{k-1} (a_i + \mu_i) \DualPairing{g_i}{x_i - x}
  \leq \max_{x \in Q_k}
    \sum_{i=0}^{k-1} a_i \DualPairing{g_i}{x_i - x}
  = \Gamma_k \Delta_k,
\]
(note that $Q_0 = \Omega_0 \cap L_0^- = \Ball{x_0}{R}$). Thus, defining
$\lambda \DefinedEqual a + \mu$, we obtain
$\Gamma_k(\lambda) \equiv \sum_{i=0}^{k-1} \lambda_i \DualNorm{g_i}
\geq \sum_{i=0}^{k-1} a_i \DualNorm{g_i} \equiv \Gamma_k > 0$ and
\[
  \delta_k(\lambda)
  \equiv \max_{x \in \Ball{x_0}{R}} \frac{1}{\Gamma_k(\lambda)}
    \sum_{i=0}^{k-1} \lambda_i \DualPairing{g_i}{x_i - x}
  \leq \frac{\Gamma_k}{\Gamma_k(\lambda)} \Delta_k
  \leq \Delta_k,
\]
Thus, $\lambda$ is a semicertificate whose gap on $\Ball{x_0}{R}$
is bounded by the sliding gap $\Delta_k$.

If $k \geq 0$ is a terminal iteration, then, by the termination criterion and
the definition of $U_k$ (see \cref{alg-gen-sch}), we have $\max_{x \in \Omega_k
\cap L_k^-} \frac{1}{\DualNorm{g_k}} \DualPairing{g_k}{x_k - x} \leq \delta$. In
this case, we apply \cref{alg-augm} to $s_k \DefinedEqual -g_k$ to obtain dual
multipliers $\mu_0, \dots, \mu_{k-1}$. By the same reasoning as above but with
the vector $(0, \dots, 0, 1)$ instead of $(a_0, \dots, a_{k-1})$, we can obtain
that $\delta_{k+1}(\lambda) \leq \delta$, where $\lambda \DefinedEqual (\mu_0,
\dots, \mu_{k-1}, 1)$.

\subsection{Standard Ellipsoid Method}

In the standard Ellipsoid Method, there is no preliminary semicertificate.
Therefore, we cannot apply the above procedure. However, in this method, it is
still possible to generate an accuracy semicertificate although the
corresponding procedure is slightly more involved. Let us now briefly describe
this procedure and discuss how it differs from the previous approach. For
details, we refer the reader to \cite{Nemirovski2010AccCert}.

Let $k \geq 1$ be an iteration of the method. There are two main steps. The
first step is to find a direction $s_k$, in which the ``width'' of the ellipsoid
$\Omega_k$ (see \cref{loc-st-ell}) is minimal:
\[
  s_k \DefinedEqual \adjustlimits \argmin_{\DualNorm{s} = 1}
    \max_{x, y \in \Omega_k} \DualPairing{s}{x - y}
  = \argmin_{\DualNorm{s} = 1} \bigl[ \,
    \max_{x \in \Omega_k} \DualPairing{s}{x}
    - \min_{x \in \Omega_k} \DualPairing{s}{x} \bigr].
\]
It is not difficult to see that $s_k$ is given by the unit
eigenvector\footnote{%
  Here eigenvectors and eigenvalues are defined with respect to the operator $B$
  inducing the norm $\Norm{\cdot}$.%
} %
of the operator $G_k$, corresponding to the largest eigenvalue. For the
corresponding minimal ``width'' of the ellipsoid, we have the following bound
via the average radius:
\begin{equation}\label{std-ell-min-width-ubd}
  \max_{x, y \in \Omega_k} \DualPairing{s_k}{x - y} \leq \rho_k,
\end{equation}
where $\rho_k \DefinedEqual 2 \AverageRadius \Omega_k$. Recall that
$\AverageRadius \Omega_k \leq e^{-k / (2 n^2)} R$ in view of
\cref{std-ell-avrad-final}.

At the second step, we apply \cref{alg-augm} two times with the sets $Q_i
\DefinedEqual \Omega_i$: first, to the vector $s_k$ to obtain dual multipliers
$\mu \DefinedEqual (\mu_0, \dots, \mu_{k-1})$ and then to the vector $-s_k$ to
obtain dual multipliers $\mu' \DefinedEqual (\mu_0', \dots, \mu_{k-1}')$. By
\cref{lm-ext-cert,std-ell-min-width-ubd}, we have
\[
  \begin{aligned}
    \max_{x \in \Ball{x_0}{R}} \Bigl[
    \DualPairing{s_k}{x - x_k}
      + \sum_{i=0}^{k-1} \mu_i \DualPairing{g_i}{x_i - x} \Bigr]
    &\leq \max_{x \in \Omega_k} \DualPairing{s_k}{x - x_k}
    \leq \rho_k, \\
    \max_{x \in \Ball{x_0}{R}} \Bigl[
    \DualPairing{s_k}{x_k - x}
      + \sum_{i=0}^{k-1} \mu_i' \DualPairing{g_i}{x_i - x} \Bigr]
    &\leq \max_{x \in \Omega_k} \DualPairing{s_k}{x_k - x}
    \leq \rho_k
  \end{aligned}
\]
(note that $Q_0 = \Omega_0 = \Ball{x_0}{R}$). Consequently, for $\lambda
\DefinedEqual \mu + \mu'$, we obtain
\[
  \max_{x \in \Ball{x_0}{R}} \sum_{i=0}^{k-1}
    \lambda_i \DualPairing{g_i}{x_i - x} \leq 2 \rho_k.
\]
Finally, one can show that
\[
  \Gamma_k(\lambda)
  \equiv \sum_{i=0}^{k-1} \lambda_i \DualNorm{g_i}
  \geq \frac{r - \rho_k}{D},
\]
where $D$ is the diameter of $Q$ and $r$ is the maximal of the radii of
Euclidean balls contained in $Q$. Thus, whenever $\rho_k < r$, $\lambda$ is a
semicertificate with the following gap on $\Ball{x_0}{R}$:
\[
  \delta_k(\lambda)
  \equiv \max_{x \in \Ball{x_0}{R}}
    \frac{1}{\Gamma_k(\lambda)} \sum_{i=0}^{k-1}
      \lambda_i \DualPairing{g_i}{x_i - x}
  \leq \frac{2 \rho_k D}{r - \rho_k}.
\]

Compared to the standard Ellipsoid Method, we see that, in the Subgradient
Ellipsoid methods, the presence of the preliminary semicertificate removes the
necessity in finding the minimal-``width'' direction and requires only one run
of the Augmentation Algorithm.

\section{Implementation Details}\label{sec-impl-dets}

\subsection{Explicit Representations}
\label{sec-expl-repr}

In the implementation of \cref{alg-gen-sch}, instead of the operators $G_k$, it
is better to work with their inverses $H_k \DefinedEqual G_k^{-1}$. Applying the
Sherman-Morrison formula to \cref{G-next}, we obtain the following update rule
for $H_k$:
\begin{equation}\label{upd-H}
  H_{k+1} = H_k - \frac{b_k H_k g_k g_k\Adjoint H_k}{
    1 + b_k \DualPairing{g_k}{H_k g_k}},
  \quad
  k \geq 0.
\end{equation}

Let us now obtain an explicit formula for the next test point $x_{k+1}$. This
has already been partly done in the proof of \cref{lm-alg-main-ineq}. Indeed,
recall that $x_{k+1}$ is the minimizer of the function $\psi_{k+1}(x)$. From
\cref{psi-next}, we see that $x_{k+1} = x_k - (a_k + \frac{1}{2} b_k U_k)
H_{k+1} g_k$. Combining it with \cref{upd-H}, we obtain
\begin{equation}\label{upd-x}
  x_{k+1} = x_k - \frac{a_k + \frac{1}{2} b_k U_k}{
    1 + b_k \DualPairing{g_k}{H_k g_k}} H_k g_k,
  \quad k \geq 0.
\end{equation}

Finally, one can obtain the following explicit representations for $L_k^-$ and
$\Omega_k$:
\begin{equation}\label{Lm-Omega-expl}
  L_k^- = \SetBuilder{x \in \VectorSpace{E}}{
    \DualPairing{c_k}{x} \leq \sigma_k},
  \qquad
  \Omega_k = \SetBuilder{x \in \VectorSpace{E}}{
    \RelativeNorm{x - z_k}{H_k^{-1}}^2 \leq D_k},
\end{equation}
where
\begin{equation}\label{upd-Lm-Omega-params}
  \begin{aligned}
    c_0 \DefinedEqual 0, \quad \sigma_0 \DefinedEqual 0,
    \qquad
    c_{k+1} \DefinedEqual c_k + a_k g_k,
    \quad
    \sigma_{k+1} \DefinedEqual \sigma_k + a_k \DualPairing{g_k}{x_k},
    \quad k \geq 0, \\
    z_k \DefinedEqual x_k - H_k c_k,
    \quad
    D_k \DefinedEqual R_k^2
      + 2 (\sigma_k - \DualPairing{c_k}{x_k}) + \DualPairing{c_k}{H_k c_k},
    \quad
    k \geq 0.
  \end{aligned}
\end{equation}
Indeed, recalling the definition of functions $\ell_k$, we see that $\ell_k(x) =
\DualPairing{c_k}{x} - \sigma_k$ for all $x \in \VectorSpace{E}$. Therefore,
$L_k^- \equiv \SetBuilder{x}{\ell_k(x) \leq 0} =
\SetBuilder{x}{\DualPairing{c_k}{x} \leq \sigma_k}$. Further, by
\cref{lm-alg-main-ineq}, $\Omega_k = \SetBuilder{x}{\DualPairing{c_k}{x} +
\frac{1}{2} \RelativeNorm{x - x_k}{G_k}^2 \leq \frac{1}{2} R_k^2 + \sigma_k}$.
Note that $\DualPairing{c_k}{x} + \frac{1}{2} \RelativeNorm{x - x_k}{G_k}^2 =
\frac{1}{2} \RelativeNorm{x - z_k}{G_k}^2 + \DualPairing{c_k}{x_k} - \frac{1}{2}
\RelativeDualNorm{c_k}{G_k}^2$ for any $x \in \VectorSpace{E}$. Hence, $\Omega_k
= \SetBuilder{x}{\frac{1}{2} \RelativeNorm{x - z_k}{G_k}^2 \leq \frac{1}{2}
D_k}$.

\begin{remark}\label{rm-st-ell-met}%
  Now we can justify the claim made in \cref{sec-stand-ell-met} that
  \cref{alg-gen-sch} with parameters~\eqref{def-ab}, \eqref{alpha-theta-st-ell},
  \eqref{gamma-st-ell} is the standard Ellipsoid Method. Indeed, from
  \cref{def-ab,loc-st-ell}, we see that $b_k =
  \frac{\gamma}{\DualPairing{g_k}{H_k g_k}}$ and $U_k = R_k
  \DualPairing{g_k}{H_k g_k}^{1/2}$. Also, in view of \cref{gamma-st-ell},
  $\frac{\gamma}{1 + \gamma} = \frac{2}{n + 1}$. Hence, by \cref{upd-x,upd-H},
  \begin{equation}\label{upd-st-ell}
    \begin{aligned}
      x_{k+1} &= x_k - \frac{R_k}{n + 1}
        \frac{H_k g_k}{\DualPairing{g_k}{H_k g_k}^{1/2}}, \\
      H_{k+1} &= H_k - \frac{2}{n + 1}
        \frac{H_k g_k g_k\Adjoint H_k}{\DualPairing{g_k}{H_k g_k}},
      \qquad k \geq 0.
    \end{aligned}
  \end{equation}
  Further, according to \cref{R-sqr-st-ell,gamma-st-ell}, for any $k \geq 0$, we
  have $R_k^2 = q^k R^2$, where $q = 1 + \frac{1}{(n - 1) (n + 1)} =
  \frac{n^2}{n^2 - 1}$. Thus, method~\eqref{upd-st-ell} indeed
  coincides\footnote{%
    Note that, in \cref{st-ell-met}, we identify the spaces $\VectorSpace{E}$,
    $\VectorSpace{E}\Dual$ with $\RealField^n$ in such a way that
    $\DualPairing{\cdot}{\cdot}$ coincides with the standard dot-product and
    $\Norm{\cdot}$ coincides with the standard Euclidean norm. Therefore, $B$
    becomes the identity matrix and $g_k\Adjoint$ becomes $g_k\Transpose$.%
  } %
  with the standard Ellipsoid Method \eqref{st-ell-met} under the change of
  variables $W_k \DefinedEqual R_k^2 H_k$.
\end{remark}

\subsection{Computing Support Function}\label{sec-comp-U}

To calculate $U_k$ in \cref{alg-gen-sch}, we need to compute the following
quantity (see \cref{Lm-Omega-expl}):
\[
  U_k = \max_x \SetBuilder{\DualPairing{g_k}{x_k - x}}{
    \RelativeNorm{x - z_k}{H_k^{-1}}^2 \leq D_k,
    \DualPairing{c_k}{x} \leq \sigma_k}.
\]
Let us discuss how to do this.

First, let us introduce the following support function to simplify our notation:
\[
  \xi(H, s, a, \beta) \DefinedEqual \max_x \SetBuilder{\DualPairing{s}{x}}{
    \RelativeNorm{x}{H^{-1}}^2 \leq 1,
    \DualPairing{a}{x} \leq \beta},
\]
where $\Map{H}{\VectorSpace{E}\Dual}{\VectorSpace{E}}$ is a self-adjoint
positive definite linear operator, $s, a \in \VectorSpace{E}\Dual$ and $\beta
\in \RealField$. In this notation, assuming that $D_k > 0$, we have
\[
  U_k = \DualPairing{g_k}{x_k - z_k}
    + \xi(D_k H_k, -g_k, c_k, \sigma_k - \DualPairing{c_k}{z_k}).
\]

Let us show how to compute $\xi(H, s, a, \beta)$. Dualizing the linear
constraint, we obtain
\begin{equation}\label{aux-prob}
  \xi(H, s, a, \beta) = \min_{\tau \geq 0} \bigl[
    \RelativeDualNorm{s - \tau a}{H^{-1}} + \tau \beta \bigr],
\end{equation}
provided that there exists some $x \in \VectorSpace{E}$ such that
$\RelativeNorm{x}{H^{-1}} < 1$, $\DualPairing{a}{x} \leq \beta$ (Slater
condition). One can show that \eqref{aux-prob} has the following solution (see
\cref{lm-aux-op}):
\begin{equation}\label{opt-tau}
  \tau(H, s, a, \beta) \DefinedEqual
  \begin{cases}
    0, & \text{if
      $\DualPairing{a}{H s} \leq \beta \RelativeDualNorm{s}{H^{-1}}$}, \\
    u(H, s, a, \beta), & \text{otherwise},
  \end{cases}
\end{equation}
where $u(H, s, a, \beta)$ is the unconstrained minimizer of the objective
function in \cref{aux-prob}.

Let us present an explicit formula for $u(H, s, a, \beta)$. For future use, it
will be convenient to write down this formula in a slightly more general form
for the following multidimensional\footnote{%
  Hereinafter, we identify $(\RealField^m)\Dual$ with $\RealField^m$ in such a
  way that $\DualPairing{\cdot}{\cdot}$ is the standard dot product.%
} %
variant of problem~\eqref{aux-prob}:
\begin{equation}\label{aux-uncon}
  \min_{u \in \RealField^m} \bigl[
    \RelativeDualNorm{s - A u}{H^{-1}} + \DualPairing{u}{b} \bigr],
\end{equation}
where $s \in \VectorSpace{E}\Dual$,
$\Map{H}{\VectorSpace{E}\Dual}{\VectorSpace{E}}$ is a self-adjoint positive
definite linear operator, $\Map{A}{\RealField^m}{\VectorSpace{E}\Dual}$ is a
linear operator with trivial kernel and $b \in \RealField^m$,
$\DualPairing{b}{(A\Adjoint H A)^{-1} b} < 1$. It is not difficult to show that
problem~\eqref{aux-uncon} has the following unique solution (see
\cref{lm-uncon-min}):
\begin{equation}\label{uncon-min}
  u(H, s, A, b) \DefinedEqual (A\Adjoint H A)^{-1} (A\Adjoint s - r b),
  \quad
  r \DefinedEqual \sqrt{\frac{
    \DualPairing{s}{H s}
    - \DualPairing{s}{A (A\Adjoint H A)^{-1} A\Adjoint s}
  }{1 - \DualPairing{b}{(A\Adjoint H A)^{-1} b}}}.
\end{equation}

Note that, in order for the above approach to work, we need to guarantee that
the sets $\Omega_k$ and $L_k^-$ satisfy a certain regularity condition, namely,
$\Interior \Omega_k \cap L_k^- \neq \emptyset$. This condition can be easily
fulfilled by adding into \cref{alg-gen-sch} the termination
criterion~\eqref{term-U}.

\begin{lemma}\label{lm-slater-cond}%
  Consider \cref{alg-gen-sch} with termination criterion~\eqref{term-U}. Then,
  at each iteration $k \geq 0$, at the beginning of
  \cref{alg:NewEllipsoidMethod:ComputeU}, we have $\Interior \Omega_k \cap L_k^-
  \neq \emptyset$. Moreover, if $k$ is a nonterminal iteration, we also have
  $\DualPairing{g_k}{x - x_k} \leq 0$ for some $x \in \Interior \Omega_k \cap
  L_k^-$.
\end{lemma}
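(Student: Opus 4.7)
The plan is to prove both statements simultaneously by induction on $k$, where the inductive step for the first claim at iteration $k+1$ will rely on the ``moreover'' part established at iteration $k$.

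\textbf{Base case ($k=0$).} By construction, $\omega_0(x) = \tfrac{1}{2}\Norm{x-x_0}^2$ and $\ell_0 \equiv 0$, so $\Omega_0 = \Ball{x_0}{R}$ and $L_0^- = \VectorSpace{E}$. Hence $\Interior \Omega_0 \cap L_0^- = \Interior \Ball{x_0}{R} \neq \emptyset$, which gives the first claim.

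\textbf{Moreover part at iteration $k$.} Assume the first claim holds at iteration $k$, pick $z \in \Interior \Omega_k \cap L_k^-$, and suppose $k$ is nonterminal. Then $U_k > \delta \DualNorm{g_k} > 0$ (using \cref{as-g-not-0}), so by the definition of $U_k$ there exists $y \in \Omega_k \cap L_k^-$ with $\DualPairing{g_k}{y - x_k} = -U_k < 0$. Set $w_\theta \DefinedEqual (1-\theta)z + \theta y$ for $\theta \in [0,1)$. Since $\Omega_k$ is convex with $z \in \Interior \Omega_k$ and $y \in \Omega_k$, the standard line-segment argument yields $w_\theta \in \Interior \Omega_k$ for all $\theta \in [0,1)$; convexity of $L_k^-$ gives $w_\theta \in L_k^-$. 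By continuity of $\DualPairing{g_k}{\cdot - x_k}$ and the strict inequality $\DualPairing{g_k}{y - x_k} < 0$, we can fix $\theta \in (0,1)$ close enough to $1$ so that $\DualPairing{g_k}{w_\theta - x_k} < 0$. This $x \DefinedEqual w_\theta$ satisfies the required property, in fact with strict inequality, which we will exploit in the inductive step below.

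\textbf{Inductive step for the first claim.} Assume the first claim holds at iteration $k$, and that $k$ is nonterminal (so iteration $k+1$ is reached). By the previous paragraph, we have a point $w \in \Interior \Omega_k \cap L_k^-$ with $\DualPairing{g_k}{w - x_k} < 0$. We verify $w \in \Interior \Omega_{k+1} \cap L_{k+1}^-$. For the halfspace: by \cref{upd-ell-omega},
\[
  \ell_{k+1}(w) = \ell_k(w) + a_k \DualPairing{g_k}{w - x_k} \leq 0,
\]
since $\ell_k(w) \leq 0$, $a_k \geq 0$, and $\DualPairing{g_k}{w - x_k} < 0$. For the ellipsoid, observe that $0 \leq \DualPairing{g_k}{x_k - w} \leq U_k$ (the second inequality by the definition of $U_k$, since $w \in \Omega_k \cap L_k^-$), so the increment
\[
  \tfrac{1}{2} b_k \bigl(U_k - \DualPairing{g_k}{x_k - w}\bigr) \DualPairing{g_k}{w - x_k}
\]
is nonpositive. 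Therefore $\omega_{k+1}(w) \leq \omega_k(w) < \tfrac{1}{2} R^2$, where the strict inequality uses $w \in \Interior \Omega_k$. This gives $w \in \Interior \Omega_{k+1}$ and completes the induction.

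\textbf{Main obstacle.} The only nontrivial point is ensuring that the interior of $\Omega_{k+1}$ is nonempty and visibly reachable, i.e.\ that a convex combination of a known interior point and a boundary witness remains strictly inside the updated ellipsoid. The update for $\omega_{k+1}$ only adds a nonpositive quantity on the relevant point $w$, so the strict inequality $\omega_k(w) < R^2/2$ is preserved. The termination criterion~\eqref{term-U} is essential here: without it, we could only guarantee $U_k \geq 0$, and the witness $y$ would not necessarily satisfy a strict separation, breaking the continuity argument used to build $w$.
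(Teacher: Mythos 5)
Your proof is correct and follows essentially the same route as the paper: induction on $k$, using nonterminality to get $U_k>0$, taking a convex combination of an interior point with the maximizer defining $U_k$ to produce a point of $\Interior\Omega_k\cap L_k^-$ lying strictly on the good side of the cut, and then checking that the updates \cref{upd-ell-omega} preserve $\ell\le 0$ and the strict inequality $\omega<\tfrac12 R^2$ at that point. The only difference is that you spell out explicitly the two steps the paper leaves as remarks (the line-segment argument and the ``slight modification'' of \cref{lm-set-inc}, which also tacitly uses $\Interior\Omega_k=\SetBuilder{x}{\omega_k(x)<\tfrac12 R^2}$).
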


\begin{proof}
  Note that $\Interior \Omega_0 \cap L_0^- = \Interior \Ball{x_0}{R} \neq
  \emptyset$. Now suppose $\Interior \Omega_k \cap L_k^- \neq \emptyset$ for
  some nonterminal iteration $k \geq 0$. Denote $P_k^- \DefinedEqual
  \SetBuilder{x \in \VectorSpace{E}}{\DualPairing{g_k}{x - x_k} \leq 0}$. Since
  iteration $k$ is nonterminal, $U_k > 0$ and hence $\Omega_k \cap L_k^- \cap
  \Interior P_k^- \neq \emptyset$. Combining it with the fact that $\Interior
  \Omega_k \cap L_k^- \neq \emptyset$, we obtain $\Interior \Omega_k \cap L_k^-
  \cap \Interior P_k^- \neq \emptyset$ and, in particular, $\Interior \Omega_k
  \cap L_k^- \cap P_k^- \neq \emptyset$. At the same time, slightly modifying
  the proof of \cref{lm-set-inc} (using that $\Interior \Omega_i = \SetBuilder{x
  \in \VectorSpace{E}}{\omega_i(x) < \frac{1}{2} R^2}$ for any $i \geq 0$ since
  $\omega_i$ is a strictly convex quadratic function), it is not difficult to
  show that $\Interior \Omega_k \cap L_k^- \cap P_k^- \subseteq \Interior
  \Omega_{k+1} \cap L_{k+1}^-$. Thus, $\Interior \Omega_{k+1} \cap L_{k+1}^-
  \neq \emptyset$, and we can continue by induction.
\end{proof}

\subsection{Computing Dual Multipliers}\label{sec-comp-dual-mult}

Recall from \cref{sec-gen-acc-cert} that the procedure for generating an
accuracy semicertificate for \cref{alg-gen-sch} requires one to repeatedly carry
out the following operation: given $s \in \VectorSpace{E}\Dual$ and some
iteration number $i \geq 0$, compute a dual multiplier $\mu \geq 0$ such that
\[
  \max_{x \in \Omega_i \cap L_i^-}
    \SetBuilder{\DualPairing{s}{x}}{\DualPairing{g_i}{x - x_i} \leq 0}
  = \max_{x \in \Omega_i \cap L_i^-} \bigl[
      \DualPairing{s}{x} + \mu \DualPairing{g_i}{x_i - x} \bigr].
\]
This can be done as follows.

First, using \cref{Lm-Omega-expl}, let us rewrite the above primal problem more
explicitly:
\[
  \max_x \SetBuilder{\DualPairing{s}{x}}{
    \RelativeNorm{x - z_i}{H_i^{-1}}^2 \leq D_i,
    \DualPairing{c_i}{x} \leq \sigma_i,
    \DualPairing{g_i}{x - x_i} \leq 0}.
\]
Our goal is to dualize the second linear constraint and find the corresponding
multiplier. However, for the sake of symmetry, it is better to dualize both
linear constraints, find the corresponding multipliers and then keep only the
second one.

Let us simplify our notation by introducing the following problem:
\begin{equation}\label{pri-prob}
  \max_x \SetBuilder{\DualPairing{s}{x}}{
    \RelativeNorm{x}{H^{-1}} \leq 1,
    \DualPairing{a_1}{x} \leq b_1,
    \DualPairing{a_2}{x} \leq b_2},
\end{equation}
where $\Map{H}{\VectorSpace{E}\Dual}{\VectorSpace{E}}$ is a self-adjoint
positive definite linear operator, $s, a_1, a_2 \in \VectorSpace{E}\Dual$ and
$b_1, b_2 \in \RealField$. Clearly, our original problem can be transformed into
this form by setting $H \DefinedEqual D_i H_i$, $a_1 \DefinedEqual c_i$, $a_2
\DefinedEqual g_i$, $b_1 \DefinedEqual \sigma_i - \DualPairing{c_i}{z_i}$, $b_2
\DefinedEqual \DualPairing{g_i}{x_i - z_i}$. Note that this transformation does
not change the dual multipliers.

Dualizing the linear constraints in \eqref{pri-prob}, we obtain the following
dual problem:
\begin{equation}\label{dual-prob}
  \min_{\mu \in \NonnegativeRay^2} \bigl[
    \RelativeDualNorm{s - \mu_1 a_1 - \mu_2 a_2}{H^{-1}}
    + \mu_1 b_1 + \mu_2 b_2 \bigr],
\end{equation}
which is solvable provided the following Slater condition holds:
\begin{equation}\label{Slater}
  \exists x \in \VectorSpace{E} \colon
  \RelativeNorm{x}{H^{-1}} < 1,
  \DualPairing{a_1}{x} \leq b_1,
  \DualPairing{a_2}{x} \leq b_2.
\end{equation}
Note that \eqref{Slater} can be ensured by adding termination
criterion~\eqref{term-U} into \cref{alg-gen-sch} (see \cref{lm-slater-cond}).

A solution of \eqref{dual-prob} can be found using \cref{alg-dual-mult}. In this
routine, $\tau(\cdot)$, $\xi(\cdot)$ and $u(\cdot)$ are the auxiliary
operations, defined in \cref{sec-comp-U}, and $A \DefinedEqual (a_1, a_2)$ is
the linear operator $A u \DefinedEqual u_1 a_1 + u_2 a_2$ acting from
$\RealField^2$ to $\VectorSpace{E}\Dual$. The correctness of
\cref{alg-dual-mult} is proved in \cref{th-alg-dual-mult-cor}.

\begin{SimpleAlgorithm}[%
  title={Computing Dual Multipliers},%
  label=alg-dual-mult,%
  width=0.75\linewidth%
]%
  \begin{AlgorithmGroup}
    \begin{AlgorithmSteps}
      \AlgorithmStep\label{step:comp-tau}%
        Compute $\tau_1 \DefinedEqual \tau(H, s, a_1, b_1)$
          and $\tau_2 \DefinedEqual \tau(H, s, a_2, b_2)$.\\
        Compute $\xi_1 \DefinedEqual \xi(H, a_2, a_1, b_1)$
          and $\xi_2 \DefinedEqual \xi(H, a_1, a_2, b_2)$.

      \AlgorithmStep\label{step:check-set-inc}%
        If $\xi_1 \leq b_2$, return $(\tau_1, 0)$.
        Else if $\xi_2 \leq b_1$, return $(0, \tau_2)$.

      \AlgorithmStep\label{step:check-points}%
        Else if $\DualPairing{a_2}{H (s - \tau_1 a_1)}
          \leq b_2 \RelativeDualNorm{s - \tau_1 a_1}{H^{-1}}$,
        return $(\tau_1, 0)$. \\
        Else if $\DualPairing{a_1}{H (s - \tau_2 a_2)}
          \leq b_1 \RelativeDualNorm{s - \tau_2 a_2}{H^{-1}}$,
        return $(0, \tau_2)$.
        
      \AlgorithmStep\label{step:final}%
        Else return $u \DefinedEqual u(H, s, A, b)$, where $A \DefinedEqual
        (a_1, a_2)$, $b \DefinedEqual (b_1, b_2)\Transpose$.
    \end{AlgorithmSteps}
  \end{AlgorithmGroup}
\end{SimpleAlgorithm}

\subsection{Time and Memory Requirements}

Let us discuss the time and memory requirements of \cref{alg-gen-sch}, taking
into account the the previously mentioned implementation details.

The main objects in \cref{alg-gen-sch}, which need to be stored and updated
between iterations, are the test points $x_k$, matrices $H_k$, scalars $R_k$,
vectors $c_k$ and scalars $\sigma_k$, see
\cref{upd-x,upd-H,R-rec,upd-Lm-Omega-params} for the corresponding updating
formulas. To store all these objects, we need $\BigO(n^2)$ memory.

Consider now what happens at each iteration $k$. First, we compute $U_k$. For
this, we calculate $z_k$ and $D_k$ according to \cref{upd-Lm-Omega-params} and
then perform the calculations described in \cref{sec-comp-U}. The most difficult
operation there is computing the matrix-vector product, which takes $\BigO(n^2)$
time. After that, we calculate the coefficients $a_k$ and $b_k$ according to
\cref{def-ab}, where $\alpha_k$, $\theta$ and $\gamma$ are certain scalars,
easily computable for all main instances of \cref{alg-gen-sch} (see \cref{%
  sec-subgrad-met,%
  sec-stand-ell-met,%
  sec-ell-met-w-prel-cert,%
  sec-subgrad-ell-met%
}). The most expensive step there is computing the norm
$\RelativeDualNorm{g_k}{G_k}$, which can be done in $\BigO(n^2)$ operations by
evaluating the product $H_k g_k$. Finally, we update our main objects, which
takes $\BigO(n^2)$ time.

Thus, each iteration of \cref{alg-gen-sch} has $\BigO(n^2)$ time and memory
complexities, exactly as in the standard Ellipsoid Method.

Now let us analyze the complexity of the auxiliary procedure from
\cref{sec-gen-acc-cert} for converting a preliminary semicertificate into a
semicertificate. The main operation in this procedure is running
\cref{alg-augm}, which iterates ``backwards'', computing some dual multiplier
$\mu_i$ at each iteration $i = k-1, \dots, 0$. Using the approach from
\cref{sec-comp-dual-mult}, we can compute $\mu_i$ in $\BigO(n^2)$ time, provided
that the objects $x_i$, $g_i$, $H_i$, $z_i$, $D_i$, $c_i$, $\sigma_i$ are stored
in memory. Note, however, that, in contrast to the ``forward'' pass, when
iterating ``backwards'', there is no way to efficiently recompute all these
objects without storing in memory a certain ``history'' of the main process from
iteration $0$ up to $k$. The simplest choice is to keep in this ``history'' all
the objects mentioned above, which requires $\BigO(k n^2)$ memory. A slightly
more efficient idea is to keep the matrix-vector products $H_i g_i$ instead of
$H_i$ and then use \cref{upd-H} to recompute $H_i$ from $H_{i+1}$ in
$\BigO(n^2)$ operations. This allows us to reduce the size of the ``history''
down to $\BigO(k n)$ while still keeping the $\BigO(k n^2)$ total time
complexity of the auxiliary procedure. Note that these estimates are exactly the
same as those for the best currently known technique for generating accuracy
certificates in the standard Ellipsoid Method
\cite{Nemirovski2010AccCert}. In particular, if we generate a
semicertificate only once at the very end, then the time complexity of our
procedure is comparable to that of running the standard Ellipsoid Method without
computing any certificates. Alternatively, as suggested
in~\cite{Nemirovski2010AccCert}, one can generate semicertificates,
say, every $2, 4, 8, 16, \dots$ iterations. Then, the total ``overhead'' of the
auxiliary procedure for generating semicertificates will be comparable to the
time complexity of the method itself.

\section{Conclusion}\label{sec-concl}

In this paper, we have addressed one of the issues of the standard Ellipsoid
Method, namely, its poor convergence for problems of large dimension $n$. For
this, we have proposed a new algorithm which can be seen as the combination of
the Subgradient and Ellipsoid methods.

Note that there are still some open questions. First, the convergence estimate
of our method with time-varying coefficients contains an extra factor
proportional to the logarithm of the iteration counter. We have seen that this
logarithmic factor has its roots yet in the Subgradient Method. However, as
discussed in \cref{rm-subgrad-met-wo-log-fact}, for the Subgradient Method, this
issue can be easily resolved by allowing projections onto the feasible set and
working with ``truncated'' gaps. An even better alternative, which does not
require any of this machinery, is to use Dual Averaging
\cite{Nesterov2009PrimalDual} instead of the Subgradient Method. It is an
interesting question whether one can combine the Dual Averaging with the
Ellipsoid Method similarly to how we have combined the Subgradient and Ellipsoid
methods.

Second, the convergence rate estimate, which we have obtained for our method, is
not continuous in the dimension $n$. Indeed, for small values of the iteration
counter~$k$, this estimate behaves as that of the Subgradient Method and then,
at some moment (around $n^2$), it switches to the estimate of the Ellipsoid
Method. As discussed at the end of \cref{sec-subgrad-ell-met}, there exists some
``small'' gap between these two estimates around the switching moment.
Nevertheless, the method itself is continuous in $n$ and does not contain any
explicit switching rules. Therefore, there should be some continuous convergence
rate estimate for our method and it is an open question to find it.

Finally, apart from the Ellipsoid Method, there exist other
``dimension-dependent`` methods (e.g., the Center-of-Gravity Method\footnote{%
Although this method is not practical, it is still interested from an academic
point of view.%
}%
~\cite{Levin1965,Newman1965}, the Inscribed Ellipsoid
Method~\cite{TarasovKhachiyan1988}, the Circumscribed Simplex
Method~\cite{Bulatov1982}, etc.). Similarly, the Subgradient Method is not the
only ``dimension-independent'' method and there exist numerous alternatives
which are better suited for certain problem classes (e.g., the Fast Gradient
Method~\cite{Nesterov1983} for Smooth Convex Optimization or methods for
Stochastic Programming~\cite{%
  Nemirovski2009Robust,%
  Duchi2011Adagrad,%
  Lan2012Optimal,%
  Dvurechensky2016Stochastic%
}). Of course, it is interesting to consider different combinations of the
aforementioned ``dimension-dependent'' and ``dimension-independent'' methods. In
this regard, it is also worth mentioning the
works~\cite{Bubeck2015Geometric,Bubeck2016Politician}, where the authors propose
new variants of gradient-type methods for smooth strongly convex minimization
problems inspired by the geometric construction of the Ellipsoid Method.


\appendix

\section{%
  Proof of
  \texorpdfstring{\cref{lm-min-zeta}}{\crefName{lemma}~\ref{lm-min-zeta}}%
}
\label{sec-proof-lm-min-zeta}

\begin{proof}
  Everywhere in the proof, we assume that the parameter $c$ is fixed and drop
  all the indices related to it.
  
  Let us show that $\zeta_p$ is a convex function. Indeed, the function
  $\Map{\omega}{\RealField \times \PositiveRay}{\RealField}$, defined by
  $\omega(x, t) \DefinedEqual \frac{x^2}{t}$, is convex. Hence, the function
  $q$, defined in \cref{def-zeta-q}, is also convex. Further, since $\omega$ is
  increasing in its first argument on $\NonnegativeRay$, the function
  $\Map{\omega_p}{\NonnegativeRay \times \PositiveRay}{\RealField}$, defined by
  $\omega_p(x, t) \DefinedEqual \frac{x^p}{t}$, is also convex as the
  composition of $\omega$ with the mapping $(x, t) \mapsto (x^{p/2}, t)$, whose
  first component is convex (since $p \geq 2$) and the second one is affine.
  Note that $\omega_p$ is increasing in its first argument. Hence, $\zeta_p$ is
  indeed a convex function as the composition of $\omega_p$ with the mapping
  $\gamma \mapsto \bigl(q(\gamma), 1 + \gamma \bigr)$, whose first part is
  convex and the second one is affine.

  Differentiating, for any $\gamma > 0$, we obtain
  \[
    \zeta_p'(\gamma) = \frac{
        p [q(\gamma)]^{p-1} q'(\gamma) (1 + \gamma)
        - [q(\gamma)]^p
      }{(1 + \gamma)^2}
    = \frac{
        [q(\gamma)]^{p-1} \bigl(
          p q'(\gamma) (1 + \gamma) - q(\gamma)
        \bigr)}{(1 + \gamma)^2}.
  \]
  Therefore, the minimizers of $\zeta_p$ are exactly solutions to the following
  equation:
  \begin{equation}\label{opt-cond-gamma}
    p q'(\gamma) (1 + \gamma) = q(\gamma).
  \end{equation}

  Note that $q'(\gamma) = \frac{c [2 \gamma (1 + \gamma) - \gamma^2]}{2 (1 +
  \gamma)^2} = \frac{c \gamma (2 + \gamma)}{2 (1 + \gamma)^2}$ (see
  \cref{def-zeta-q}). Hence, \cref{opt-cond-gamma} can be written as $c p \gamma
  (2 + \gamma) = 2 (1 + \gamma) + c \gamma^2$ or, equivalently, $c (p - 1)
  \gamma^2 + 2 (c p - 1) \gamma = 2$. Clearly, $\gamma = 0$ is not a solution of
  this equation. Making the change of variables $\gamma = \frac{2}{u}$, $u \neq
  0$, we come the quadratic equation $u^2 - 2 (c p - 1) u = 2 c (p - 1)$ or,
  equivalently, to $[ u - (c p - 1) ]^2 = 2 c (p - 1) + (c p - 1)^2 = c^2 p^2 -
  (2 c - 1)$. This equation has two solutions: $u_1 \DefinedEqual c p - 1 +
  \sqrt{c^2 p^2 - (2 c - 1)}$ and $u_2 \DefinedEqual c p - 1 - \sqrt{c^2 p^2 -
  (2 c - 1)}$. Note that $u_2 \geq c p - 1 - \sqrt{c^2 p^2 + 1} \geq c p - 1 -
  (c p + 1) = -2$. Hence, $\gamma_2 \DefinedEqual \frac{2}{u_2} \leq -1$ cannot
  be a minimizer of $\zeta_p$. Consequently, only $u_1$ is an acceptable
  solution (note that $u_1 > 0$ in view of our assumptions on $c$ and $p$).
  Thus, \cref{gamma-opt} is proved.

  Let us show that $\gamma(p)$ belongs to the interval specified in
  \cref{gamma-opt}. For this, we need to prove that $1 \leq c p \gamma(p) \leq
  2$. Note that the function $h_a(t) \DefinedEqual \frac{t}{\sqrt{t^2 - a} + t -
  1}$, where $a \geq 0$, is decreasing in $t$. Indeed, $\frac{1}{h_a(t)} =
  \sqrt{1 - \frac{a}{t^2}} - \frac{1}{t} + 1$ is an increasing function in $t$.
  Hence, $c p \gamma(p) = 2 h_{2 c - 1}(c p) \geq 2 \lim_{t \to \infty} h_{2 c -
  1}(t) = 1$. On the other hand, using that $p \geq 2$ and denoting $\alpha
  \DefinedEqual 2 c \geq 1$, we get $c p \gamma(p) = 2 h_{\alpha - 1}(c p) \leq
  2 g(\alpha)$, where $g(\alpha) \DefinedEqual h_{\alpha - 1}(\alpha) =
  \frac{\alpha}{\sqrt{\alpha^2 - \alpha + 1} + \alpha - 1}$. Note that $g$ is
  decreasing in $\alpha$. Indeed, denoting $\tau \DefinedEqual \frac{1}{\alpha}
  \in \OpenClosedInterval{0}{1}$, we obtain $\frac{1}{g(\alpha)} = \sqrt{1 -
  \tau + \tau^2} - \tau + 1$, which is a decreasing function in $\tau$. Thus, $c
  p \gamma(p) \leq 2 g(1) = 2$.

  It remains to prove that $\zeta_p(\gamma(p)) \leq e^{-1 / (2 c p)}$. Let
  $\Map{\phi}{\ClosedOpenInterval{2}{+\infty}}{\RealField}$ be the function
  \begin{equation}\label{def-phi-p}
    \phi(p)
    \DefinedEqual -\ln \zeta_p\bigl( \gamma(p) \bigr)
    = \ln\bigl( 1 + \gamma(p) \bigr) - p \ln q\bigl( \gamma(p) \bigr).
  \end{equation}
  We need to show that $\phi(p) \geq 1 / (2 c p)$ for all $p \geq 2$ or,
  equivalently, that the function
  $\Map{\chi}{\OpenClosedInterval{0}{\frac{1}{2}}}{\RealField}$, defined by
  $\chi(\tau) \DefinedEqual \phi(\frac{1}{\tau})$, satisfies $\chi(\tau) \geq
  \frac{\tau}{2 c}$ for all $\tau \in \OpenClosedInterval{0}{\frac{1}{2}}$. For
  this, it suffices to show that $\chi$ is convex, $\lim_{\tau \to 0} \chi(\tau)
  = 0$ and $\lim_{\tau \to 0} \chi'(\tau) = \frac{1}{2 c}$. Differentiating, we
  see that $\chi'(\tau) = -\frac{1}{\tau^2} \phi'(\frac{1}{\tau} )$ and
  $\chi''(\tau) = \frac{2}{\tau^3} \phi'(\frac{1}{\tau} ) + \frac{1}{\tau^4}
  \phi''(\frac{1}{\tau} )$ for all $\tau \in
  \OpenClosedInterval{0}{\frac{1}{2}}$. Thus, we need to justify that
  \begin{equation}\label{conv-chi-via-phi}
    2 \phi'(p) + p \phi''(p) \geq 0
  \end{equation}
  for all $p \geq 2$ and that
  \begin{equation}\label{lim-phi}
    \lim_{p \to \infty} \phi(p) = 0,
    \qquad
    \lim_{p \to \infty} [-p^2 \phi'(p)] = \frac{1}{2 c}.
  \end{equation}

  Let $p \geq 2$ be arbitrary. Differentiating and using \cref{opt-cond-gamma},
  we obtain
  \begin{equation}\label{der-phi-p}
    \begin{aligned}
      \phi'(p) &= \frac{\gamma'(p)}{1 + \gamma(p)}
      - \ln q\bigl( \gamma(p) \bigr)
      - \frac{p q'\bigl( \gamma(p) \bigr) \gamma'(p)}{
          q\bigl( \gamma(p) \bigr)}
      = -\ln q\bigl( \gamma(p) \bigr), \\
      \phi''(p) &= -\frac{q'(\gamma(p)) \gamma'(p)}{q(\gamma(p))}
      = -\frac{\gamma'(p)}{p \bigl( 1 + \gamma(p) \bigr)}.
    \end{aligned}
  \end{equation}
  Therefore,
  \[
    2 \phi'(p) + p \phi''(p)
    = -2 \ln q(\gamma(p)) - \frac{\gamma'(p)}{1 + \gamma(p)}
    \geq -\frac{c \gamma^2(p) + \gamma'(p)}{1 + \gamma(p)},
  \]
  where the inequality follows from \cref{def-zeta-q} and the fact that $\ln(1 +
  \tau) \leq \tau$ for any $\tau > -1$. Thus, to show \cref{conv-chi-via-phi},
  we need to prove that $-\gamma'(p) \geq c \gamma^2(p)$ or, equivalently,
  $\frac{d}{d p} \frac{1}{\gamma(p)} \geq c$. But this is immediate. Indeed,
  using \cref{gamma-opt}, we obtain $\frac{d}{d p} \frac{1}{\gamma(p)} =
  \frac{c}{2} ( \frac{c p}{\sqrt{c^2 p^2 - (2 c - 1)}} + 1 ) \geq c$ since the
  function $\tau \mapsto \frac{\tau}{\sqrt{\tau^2 - 1}}$ is decreasing. Thus,
  \cref{conv-chi-via-phi} is proved.

  It remains to show \cref{lim-phi}. From \cref{gamma-opt}, we see that
  $\gamma(p) \to 0$ and $p \gamma(p) \to \frac{1}{c}$ as $p \to \infty$. Hence,
  using \cref{def-zeta-q}, we obtain
  \[
    \lim_{p \to \infty} p^2 \ln q\bigl( \gamma(p) \bigr)
    = \lim_{p \to \infty} \frac{c p^2 \gamma^2(p)}{
      2 \bigl( 1 + \gamma(p) \bigr)}
    = \frac{c}{2} \lim_{p \to \infty} p^2 \gamma^2(p)
    = \frac{1}{2 c}.
  \]
  Consequently, in view of \cref{def-phi-p,der-phi-p}, we have
  \[
    \begin{aligned}
      \lim_{p \to \infty} \phi(p)
      &= \lim_{p \to \infty} \bigl[ \ln\bigl( 1 + \gamma(p) \bigr)
        - p \ln q\bigl( \gamma(p) \bigr) \bigr]
      = 0, \\
      \lim_{p \to \infty} [-p^2 \phi'(p)]
      &= \lim_{p \to \infty} p^2 \ln q\bigl( \gamma(p) \bigr)
      = \frac{1}{2 c}.
    \end{aligned}
  \]
  which is exactly \cref{lim-phi}.
\end{proof}

\section{Support Function and Dual Multipliers: Proofs}

For brevity, everywhere in this section, we write $\Norm{\cdot}$ and
$\DualNorm{\cdot}$ instead of $\RelativeNorm{\cdot}{H^{-1}}$ and
$\RelativeDualNorm{\cdot}{H^{-1}}$, respectively. We also denote $B_0
\DefinedEqual \SetBuilder{x \in \VectorSpace{E}}{\Norm{x} \leq 1}$.

\subsection{Auxiliary Operations}

\begin{lemma}\label{lm-uncon-min}%
  Let $s \in \VectorSpace{E}\Dual$, let
  $\Map{A}{\RealField^m}{\VectorSpace{E}\Dual}$ be a linear operator with
  trivial kernel and let $b \in \RealField^m$, $\DualPairing{b}{(A\Adjoint H
  A)^{-1} b} < 1$. Then, \eqref{aux-uncon} has a unique solution given by
  \cref{uncon-min}.
\end{lemma}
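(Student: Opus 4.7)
The plan is a standard convex-optimization argument: establish existence of a minimizer of $f(u) \DefinedEqual \RelativeDualNorm{s - A u}{H^{-1}} + \DualPairing{u}{b}$ by coercivity, derive the claimed formula from first-order optimality, and obtain uniqueness as a byproduct.

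For coercivity under $\gamma \DefinedEqual \DualPairing{b}{(A\Adjoint H A)^{-1} b} < 1$, I would first establish the dual bound $-\DualPairing{u}{b} \leq \sqrt{\gamma}\,\RelativeDualNorm{A u}{H^{-1}}$ for every $u \in \RealField^m$. This follows from a direct Lagrangian computation for $\max\SetBuilder{\DualPairing{u}{-b}}{\RelativeDualNorm{A u}{H^{-1}} \leq 1}$, whose optimal value works out to $\sqrt{\gamma}$. Combining it with the triangle inequality $\RelativeDualNorm{s - A u}{H^{-1}} \geq \RelativeDualNorm{A u}{H^{-1}} - \RelativeDualNorm{s}{H^{-1}}$ yields $f(u) \geq (1 - \sqrt{\gamma})\,\RelativeDualNorm{A u}{H^{-1}} - \RelativeDualNorm{s}{H^{-1}}$, which tends to $+\infty$ as $\Norm{u} \to \infty$ since $A$ has trivial kernel; a minimizer therefore exists.

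Next, at a minimizer $u^*$ with $s - A u^* \neq 0$, $f$ is smooth and $\nabla f(u^*) = 0$ rearranges to $A\Adjoint H A u^* = A\Adjoint H s - r\, b$ with $r \DefinedEqual \RelativeDualNorm{s - A u^*}{H^{-1}}$. Inverting $A\Adjoint H A$ produces the ansatz $u^* = u_0 - r\,\tilde b$, where $u_0 \DefinedEqual (A\Adjoint H A)^{-1} A\Adjoint H s$ and $\tilde b \DefinedEqual (A\Adjoint H A)^{-1} b$. The key observation, which I expect to be the crux of the argument, is the orthogonality relation $A\Adjoint H (s - A u_0) = 0$: substituted into $r^2 = \RelativeDualNorm{s - A u^*}{H^{-1}}^2$, it kills the cross-term and reduces the problem to the scalar identity $r^2 (1 - \gamma) = \RelativeDualNorm{s - A u_0}{H^{-1}}^2$. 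Expanding the right-hand side as $\DualPairing{s}{H s}$ minus the $H$-norm squared of the $H$-projection of $s$ onto the range of $A$ then yields the explicit formula for $r$, and hence for $u^*$.

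It remains to check that this candidate really is a minimizer and that it is unique. When $r > 0$, the first-order condition together with convexity of $f$ suffices. When $r = 0$, which happens exactly if $s$ lies in the range of $A$, the formula forces $u^* = u_0$ with $A u_0 = s$, and optimality reduces to showing $0 \in \partial f(u_0)$, equivalently $b = A\Adjoint v$ for some $v$ with $\RelativeNorm{v}{H^{-1}} \leq 1$ -- this is precisely the Slater-type content of $\gamma < 1$, since the minimum-$\RelativeNorm{\cdot}{H^{-1}}$ solution of $A\Adjoint v = b$ has squared norm exactly $\gamma$. Uniqueness follows because the coupled system $\{A\Adjoint H A u = A\Adjoint H s - r b,\; r = \RelativeDualNorm{s - A u}{H^{-1}}\}$ admits the formula as its only solution, so any minimizer must coincide with it. The main technical hurdle throughout is the cross-term cancellation, which ultimately rests on $P \DefinedEqual A (A\Adjoint H A)^{-1} A\Adjoint H$ being an $H$-orthogonal projection onto the range of $A$.
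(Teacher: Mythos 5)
Your proposal is correct and follows essentially the same route as the paper: coercivity from the bound $-\DualPairing{u}{b} \leq \DualPairing{b}{(A\Adjoint H A)^{-1} b}^{1/2}\, \RelativeDualNorm{A u}{H^{-1}}$, then the first-order optimality condition solved for $u$ in terms of $r = \RelativeDualNorm{s - A u}{H^{-1}}$, and a scalar self-consistency equation determining $r$ (the paper performs your cross-term cancellation by direct expansion of $\RelativeDualNorm{s - A u}{H^{-1}}^2$ rather than by invoking the $H$-orthogonal projection explicitly, and it handles the $s = A u$ case simply by injectivity of $A$). No gaps.
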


\begin{proof}
  Note that the sublevel sets of the objective function in \cref{aux-uncon} are
  bounded:
  \[
    \DualNorm{s - A u} + \DualPairing{u}{b}
    \geq \DualNorm{A u} - \DualNorm{s} + \DualPairing{u}{b}
    \geq (1 - \DualPairing{b}{(A\Adjoint H A)^{-1} b}^{1/2}) \DualNorm{A u}
      - \DualNorm{s}
  \]
  for all $u \in \RealField^m$. Hence, problem~\eqref{aux-uncon} has a solution.

  Let $u \in \RealField^m$ be a solution of problem~\eqref{aux-uncon}. If $s = A
  u$, then $u = (A\Adjoint H A)^{-1} A\Adjoint s$, which coincides with the
  solution given by \cref{uncon-min} (note that, in this case, $r = 0$).
  
  Now suppose $s \neq A u$. Then, from the first-order optimality condition, we
  obtain that $b = A\Adjoint (s - A u) / \rho$, where $\rho \DefinedEqual
  \DualNorm{s - A u} > 0$. Hence, $u = (A\Adjoint H A)^{-1} (A\Adjoint s - \rho
  b)$ and
  \[
    \begin{aligned}
      \rho^2
      &= \DualNorm{s - A u}^2
      = \DualNorm{s}^2 - 2 \DualPairing{A\Adjoint s}{u}
        + \DualPairing{A\Adjoint H A u}{u} \\
      &= \DualNorm{s}^2 - 2 \DualPairing{A\Adjoint s}{
          (A\Adjoint H A)^{-1} (A\Adjoint s - \rho b)}
        + \DualPairing{A\Adjoint s - \rho b}{
          (A\Adjoint H A)^{-1} (A\Adjoint s - \rho b)} \\
      &= \DualNorm{s}^2 - \DualPairing{s}{A (A\Adjoint H A)^{-1} A\Adjoint s}
        + \rho^2 \DualPairing{b}{(A\Adjoint H A)^{-1} b}. 
    \end{aligned}
  \]
  Thus, $\rho = r$ and $u = u(H, s, A, b)$ given by \cref{uncon-min}.
\end{proof}

\begin{lemma}\label{lm-aux-op}%
  Let $s, a \in \VectorSpace{E}\Dual$, $\beta \in \RealField$ be such that
  $\DualPairing{a}{x} \leq \beta$ for some $x \in \Interior B_0$. Then,
  \eqref{aux-prob} has a solution given by \cref{opt-tau}. Moreover, this
  solution is unique if $\beta < \DualNorm{a}$.
\end{lemma}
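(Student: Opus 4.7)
The plan is to proceed by Lagrangian duality. Problem~\eqref{aux-prob} arises as the dual of the primal problem defining $\xi(H, s, a, \beta)$, obtained by dualizing the linear constraint $\DualPairing{a}{x} \leq \beta$ via a multiplier $\tau \geq 0$ and using the identity $\max_{\Norm{x} \leq 1} \DualPairing{s - \tau a}{x} = \DualNorm{s - \tau a}$. Since $B_0$ is compact, the primal value is finite; the Slater condition then guarantees strong duality and attainment of the dual minimum.

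I would next identify the optimal $\tau^* \geq 0$ via first-order convex analysis. The dual objective $f(\tau) \DefinedEqual \DualNorm{s - \tau a} + \tau \beta$ is convex. When $s \neq 0$ it is differentiable at $\tau = 0$ with
\[
  f'(0) = -\DualPairing{a}{H s}/\DualNorm{s} + \beta,
\]
so $\tau^* = 0$ is a minimizer if and only if $\DualPairing{a}{H s} \leq \beta \DualNorm{s}$, matching the first case of \cref{opt-tau}. The edge case $s = 0$ reduces to $f(\tau) = \tau(\DualNorm{a} + \beta)$, which is non-decreasing since Slater forces $\beta \geq -\DualNorm{a}$ (applying Cauchy--Schwarz to the Slater point), so $\tau^* = 0$ works and the first-case condition holds trivially as $0 \leq 0$.

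Otherwise $\DualPairing{a}{H s} > \beta \DualNorm{s}$, so $f'_+(0) < 0$ and $\tau^* > 0$. First-order optimality $0 \in \partial f(\tau^*)$ then identifies $\tau^*$ with the solution of the unconstrained problem \cref{aux-uncon} with $m = 1$, the operator $u \mapsto u a$, and $b = \beta$, which \cref{lm-uncon-min} provides explicitly via the formula in \cref{uncon-min}. To invoke that lemma I need $\beta^2 < \DualNorm{a}^2$: the bound $\beta < \DualNorm{a}$ must hold (else Cauchy--Schwarz gives $\DualPairing{a}{H s} \leq \DualNorm{a} \DualNorm{s} \leq \beta \DualNorm{s}$, contradicting Case~2), and $\beta > -\DualNorm{a}$ follows from Slater since $a \neq 0$ here (if $a = 0$, Slater forces $\beta \geq 0$, which lands in Case~1). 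The possible kink of $f$ at $\tau = \lambda$ in the parallel case $s = \lambda a$ is handled via one-sided derivatives, which straddle $0$ under $-\DualNorm{a} < \beta < \DualNorm{a}$.

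For uniqueness when $\beta < \DualNorm{a}$, I would show $f$ is strictly convex at the minimizer. A direct computation gives
\[
  f''(\tau) = \frac{\DualNorm{a}^2 \DualNorm{s}^2 - \DualPairing{a}{H s}^2}{\DualNorm{s - \tau a}^3}
\]
wherever $s \neq \tau a$, which is strictly positive unless $s$ and $a$ are collinear. In the collinear case $s = \lambda a$, one verifies directly from $f(\tau) = \AbsoluteValue{\lambda - \tau} \DualNorm{a} + \tau \beta$ that $f$ strictly decreases on $\ClosedClosedInterval{0}{\lambda}$ (thanks to $\beta < \DualNorm{a}$) and strictly increases on $\ClosedOpenInterval{\lambda}{+\infty}$ (thanks to $\beta > -\DualNorm{a}$), yielding a unique minimizer at $\tau^* = \lambda$. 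The main obstacle is simply the careful case analysis of these degenerate sub-cases ($s = 0$, $a = 0$, and $s$ parallel to $a$); once they are dispatched, everything reduces to standard first-order conditions and a direct application of \cref{lm-uncon-min}.
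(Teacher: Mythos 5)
Your proposal is correct and follows essentially the same route as the paper's proof: decide between the two cases of \cref{opt-tau} via the derivative of $\tau \mapsto \DualNorm{s - \tau a} + \tau \beta$ at $0$, reduce the positive-multiplier case to \cref{lm-uncon-min} after checking $\AbsoluteValue{\beta} < \DualNorm{a}$ via Cauchy--Schwarz and the Slater point, and handle $s = 0$ and the collinear case separately. The only cosmetic differences are that you obtain existence of a minimizer from dual attainment under Slater's condition rather than from the paper's elementary coercivity bound on the sublevel sets, and you derive uniqueness from the explicit second derivative rather than from the linear-independence/strict-convexity argument; both variants are sound (only note that in the collinear case $s = \lambda a$ with $\lambda \leq 0$ the unique minimizer sits at $0$ rather than at $\lambda$, which does not affect the uniqueness claim).
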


\begin{proof}
  Let $\Map{\phi}{\RealField}{\RealField}$ be the function $\phi(\tau)
  \DefinedEqual \DualNorm{s - \tau a} + \tau \beta$. By our assumptions, $\beta
  > -\DualNorm{a}$ if $a \neq 0$ and $\beta \geq 0$ if $a = 0$. If additionally
  $\beta < \DualNorm{a}$, then $\AbsoluteValue{\beta} < \DualNorm{a}$.

  If $s = 0$, then $\phi(\tau) = \tau (\DualNorm{a} + \beta) \geq \phi(0)$ for
  all $\tau \geq 0$, so $0$ is a solution of~\eqref{aux-prob}. Clearly, this
  solution is unique when $\beta < \DualNorm{a}$ because then
  $\AbsoluteValue{\beta} < \DualNorm{a}$.

  From now on, suppose $s \neq 0$. Then, $\phi$ is differentiable at $0$ with
  $\phi'(0) = \beta - \DualPairing{a}{s} / \DualNorm{s}$. If $\DualPairing{a}{s}
  \leq \beta \DualNorm{s}$, then $\phi'(0) \geq 0$, so $0$ is a solution of
  \eqref{aux-prob}. Note that this solution is unique if $\DualPairing{a}{s} <
  \beta \DualNorm{s}$ because then $\phi'(0) > 0$, i.e. $\phi$ is strictly
  increasing on $\NonnegativeRay$.

  Suppose $\DualPairing{a}{s} > \beta \DualNorm{s}$. Then, $\beta <
  \DualNorm{a}$ and thus $\AbsoluteValue{\beta} < \DualNorm{a}$. Note that, for
  any $\tau \geq 0$, we have $\phi(\tau) \geq \tau (\DualNorm{a} + \beta) -
  \DualNorm{s}$. Hence, the sublevel sets of $\phi$, intersected with
  $\NonnegativeRay$, are bounded, so problem~\eqref{aux-prob} has a solution.
  Since $\phi'(0) < 0$, any solution of \eqref{aux-prob} is strictly positive
  and so must be a solution of problem~\eqref{aux-uncon} for $A \DefinedEqual a$
  and $b \DefinedEqual \beta$. But, by \cref{lm-uncon-min}, the latter solution
  is unique and equals $u(H, s, a, \beta)$.
  
  We have proved that \cref{opt-tau} is indeed a solution of \eqref{aux-prob}.
  Moreover, when $\DualPairing{a}{s} \neq \beta \DualNorm{s}$, we have shown
  that this solution is unique. It remains to prove the uniqueness of solution
  when $\DualPairing{a}{s} = \beta \DualNorm{s}$, assuming additionally that
  $\beta < \DualNorm{a}$. But this is simple. Indeed, by our assumptions,
  $\AbsoluteValue{\beta} < \DualNorm{a}$, so $\AbsoluteValue{\DualPairing{a}{s}}
  = \AbsoluteValue{\beta} \DualNorm{s} < \DualNorm{a} \DualNorm{s}$. Hence, $a$
  and $s$ are linearly independent. But then $\phi$ is strictly convex and thus
  its minimizer is unique.
\end{proof}

\subsection{Computation of Dual Multipliers}

In this section, we prove the correctness of \cref{alg-dual-mult}.

For $s \in \VectorSpace{E}\Dual$, let $X(s)$ be the subdifferential of
$\DualNorm{\cdot}$ at the point $s$:
\begin{equation}\label{def-set-X}
  X(s) \DefinedEqual \begin{cases}
    \Set{H s / \DualNorm{s}}, & \text{if $s \neq 0$}, \\
    B_0, & \text{if $s = 0$}.
  \end{cases}
\end{equation}
Clearly, $X(s) \subseteq B_0$ for any $s \in \VectorSpace{E}\Dual$. When $s \neq
0$, we denote the unique element of $X(s)$ by $x(s)$.

Let us formulate a convenient optimality condition.

\begin{lemma}\label{lm-opt-cond}%
  Let $A$ be the linear operator from $\RealField^m$ to $\VectorSpace{E}\Dual$,
  defined by $A u \DefinedEqual \sum_{i=1}^m u_i a_i$, where $a_1, \dots, a_m
  \in \VectorSpace{E}\Dual$, and let $b \in \RealField^m$, $s \in
  \VectorSpace{E}\Dual$. Then, $\mu^* \in \NonnegativeRay^m$ is a minimizer of
  $\psi(\mu) \DefinedEqual \DualNorm{s - A \mu} + \DualPairing{\mu}{b}$ over
  $\NonnegativeRay^m$ if and only if $X(s - A \mu^*) \cap L_1(\mu_1^*) \dots
  L_m(\mu_m^*) \neq \emptyset$, where, for each $1 \leq i \leq m$ and $\tau >
  0$, we denote $L_i(\tau) \DefinedEqual \SetBuilder{x \in
  \VectorSpace{E}}{\DualPairing{a_i}{x} \leq b_i}$, if $\tau = 0$, and
  $L_i(\tau) \DefinedEqual \SetBuilder{x \in
  \VectorSpace{E}}{\DualPairing{a_i}{x} = b_i}$, if $\tau > 0$.
\end{lemma}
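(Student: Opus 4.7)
The plan is to recognize this as a standard KKT-type optimality condition for convex minimization over the nonnegative orthant and unfold it explicitly. Since $\psi$ is convex (the sum of the convex function $\mu \mapsto \RelativeDualNorm{s - A\mu}{H^{-1}}$ and the linear function $\mu \mapsto \DualPairing{\mu}{b}$), the point $\mu^* \in \NonnegativeRay^m$ minimizes $\psi$ over $\NonnegativeRay^m$ if and only if
\[
  0 \in \partial \psi(\mu^*) + N_{\NonnegativeRay^m}(\mu^*),
\]
where $N_{\NonnegativeRay^m}(\mu^*) = \SetBuilder{\nu \in \RealField^m}{\nu_i \leq 0 \text{ for all } i, \text{ and } \nu_i = 0 \text{ whenever } \mu_i^* > 0}$.

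Next, I would compute $\partial \psi(\mu^*)$ using the chain rule together with the observation that the function $X$ defined in \cref{def-set-X} is precisely the subdifferential of $\DualNorm{\cdot}$ at its argument (for $s \neq 0$ it is the gradient $H s / \DualNorm{s}$; for $s = 0$ it is the unit ball $B_0$, which is the dual of $\DualNorm{\cdot}$). Since the affine map $\mu \mapsto s - A\mu$ is everywhere finite and $\DualNorm{\cdot}$ is continuous, no constraint qualification is needed, and we obtain
\[
  \partial \psi(\mu^*) = b - A\Adjoint X(s - A \mu^*).
\]
Combining the two displays, the optimality condition becomes: there exist $x \in X(s - A\mu^*)$ and $\nu \in \RealField^m$ with $\nu_i \leq 0$ for all $i$ and $\nu_i = 0$ whenever $\mu_i^* > 0$, such that $b - A\Adjoint x + \nu = 0$.

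Finally, I would translate this componentwise. The $i$-th component of $b - A\Adjoint x + \nu = 0$ reads $\nu_i = \DualPairing{a_i}{x} - b_i$, so the sign condition $\nu_i \leq 0$ says $\DualPairing{a_i}{x} \leq b_i$, while the complementary slackness condition (that $\nu_i = 0$ whenever $\mu_i^* > 0$) says $\DualPairing{a_i}{x} = b_i$ as soon as $\mu_i^* > 0$. In other words, $x \in L_i(\mu_i^*)$ for every $i$, which is exactly the claimed condition $X(s - A\mu^*) \cap L_1(\mu_1^*) \cap \cdots \cap L_m(\mu_m^*) \neq \emptyset$. There is no essential obstacle here; the only subtlety worth mentioning is the need to use the definition of $X$ for $s - A\mu^* = 0$ (the unit ball case), which is already consistent with the subdifferential of a norm at the origin, so the argument applies uniformly in both regimes.
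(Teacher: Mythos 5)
Your proof is correct and follows essentially the same route as the paper: the standard first-order optimality condition for a convex function over the nonnegative orthant (your normal-cone formulation is equivalent to the paper's sign-plus-complementary-slackness condition on a subgradient), combined with the subdifferential formula $\Subdifferential \psi(\mu^*) = b - A\Adjoint X(s - A\mu^*)$. Your version merely spells out the componentwise translation and the chain-rule justification that the paper leaves implicit.
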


\begin{proof}
  Indeed, the standard optimality condition for a convex function over the
  nonnegative orthant is as follows: $\mu^* \in \NonnegativeRay^m$ is a
  minimizer of $\psi$ on $\NonnegativeRay^m$ if and only if there exists $g^*
  \in \Subdifferential \psi(\mu^*)$ such that $g_i^* \geq 0$ and $g_i^* \mu_i^*
  = 0$ for all $1 \leq i \leq m$. It remains to note that $\Subdifferential
  \psi(\mu^*) = b - A\Adjoint X(s - A \mu^*)$.
\end{proof}

\begin{theorem}\label{th-alg-dual-mult-cor}%
  \Cref{alg-dual-mult} is well-defined and returns a solution
  of~\eqref{dual-prob}.
\end{theorem}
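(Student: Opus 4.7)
The plan is to verify that each of the four possible return values of \cref{alg-dual-mult} satisfies the optimality condition of \cref{lm-opt-cond}: a pair $\mu^* \in \NonnegativeRay^2$ solves~\eqref{dual-prob} if and only if there exists $x^* \in X(s - \mu_1^* a_1 - \mu_2^* a_2)$ with $\DualPairing{a_i}{x^*} \leq b_i$, and with equality when $\mu_i^* > 0$. Existence of a dual optimum follows from the Slater condition~\eqref{Slater} by standard convex duality, and well-definedness of the auxiliary operations $\tau(\cdot)$, $\xi(\cdot)$, $u(\cdot)$ invoked by the algorithm is secured by \cref{lm-aux-op,lm-uncon-min} throughout.

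For the two outputs returned in \cref{step:check-set-inc}: if $\xi_1 \leq b_2$, then by the definition of $\xi_1$ every $x \in B_0$ with $\DualPairing{a_1}{x} \leq b_1$ also satisfies $\DualPairing{a_2}{x} \leq b_2$, so the $a_2$-constraint is redundant in~\eqref{pri-prob} and the problem collapses to its one-constraint version. By \cref{lm-aux-op}, $\tau_1$ solves that reduced problem, and the one-constraint version of \cref{lm-opt-cond} produces $x^* \in X(s - \tau_1 a_1)$ with $\DualPairing{a_1}{x^*} \leq b_1$ (equality when $\tau_1 > 0$); the hypothesis $\xi_1 \leq b_2$ then yields $\DualPairing{a_2}{x^*} \leq b_2$ for free, certifying $(\tau_1, 0)$ as optimal. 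The branch $\xi_2 \leq b_1$ is entirely symmetric.

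For the two outputs returned in \cref{step:check-points} we have passed \cref{step:check-set-inc}, so $\xi_1 > b_2$ and $\xi_2 > b_1$. In the first sub-case I will first check that $s \neq \tau_1 a_1$ (the degenerate possibility requires a short separate argument via \cref{lm-aux-op} and the branch assumption), so that $X(s - \tau_1 a_1)$ is the singleton $\Set{x^*}$ with $x^* \DefinedEqual H(s - \tau_1 a_1)/\RelativeDualNorm{s - \tau_1 a_1}{H^{-1}}$. The hypothesis of the test then reads exactly $\DualPairing{a_2}{x^*} \leq b_2$, while $\DualPairing{a_1}{x^*} \leq b_1$ (with equality when $\tau_1 > 0$) follows from one-dimensional optimality of $\tau_1$. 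Thus \cref{lm-opt-cond} certifies $(\tau_1, 0)$ as a dual solution; the second sub-case is symmetric.

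Finally, for the output of \cref{step:final}, the key observation is that, once \cref{step:final} has been reached, every minimizer of~\eqref{dual-prob} must have both coordinates strictly positive. Indeed, a minimizer supported on the axis $\mu_1 = 0$ would, by \cref{lm-aux-op}, equal $(0, \tau_2)$; applying \cref{lm-opt-cond} in reverse would then force the second sub-case of \cref{step:check-points} to succeed, contradicting that \cref{step:final} was reached, and the other axis is ruled out analogously. Any such strictly positive minimizer is an interior critical point of $\mu \mapsto \RelativeDualNorm{s - A\mu}{H^{-1}} + \DualPairing{\mu}{b}$, so by the uniqueness in \cref{lm-uncon-min} it must coincide with $u \DefinedEqual u(H, s, A, b)$; in particular $u \geq 0$ and $u$ solves~\eqref{dual-prob}. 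The hardest part will be this final case: one must simultaneously justify applicability of \cref{lm-uncon-min} (linear independence of $a_1, a_2$ and the hypothesis $\DualPairing{b}{(A\Adjoint H A)^{-1} b} < 1$, both to be deduced from~\eqref{Slater} together with the failure of the earlier branches) and tidy up the boundary and degeneracy edge cases used in the contrapositive argument.
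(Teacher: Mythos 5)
Your overall strategy is the same as the paper's: verify the optimality condition of \cref{lm-opt-cond} for each branch's output, and for \cref{step:final} show that every minimizer of \eqref{dual-prob} has both coordinates positive, hence coincides with $u(H,s,A,b)$ by \cref{lm-uncon-min}. The treatment of \cref{step:check-set-inc} and the positivity argument are sound. But the two steps you defer are where the real content lies, and one of them, as stated, would fail.

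In \cref{step:check-points} you propose to ``check that $s \neq \tau_1 a_1$''. This cannot be established: the degenerate case genuinely occurs (take $s = 0$, so that $\tau_1 = \tau_2 = 0$ and the first test reads $0 \leq 0$; more generally $s$ may be a nonnegative multiple of $a_1$). In that case $X(s - \tau_1 a_1)$ is the whole unit ball $B_0$ of $\RelativeNorm{\cdot}{H^{-1}}$ rather than a singleton, and certifying $(\tau_1,0)$ via \cref{lm-opt-cond} requires exhibiting a point of $B_0$ lying on the hyperplane $\DualPairing{a_1}{x} = b_1$ (when $\tau_1 > 0$) and satisfying $\DualPairing{a_2}{x} \leq b_2$. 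This does not follow from \cref{lm-aux-op} or from the branch test alone; the paper produces such a point by taking a convex combination of the Slater point from \cref{Slater} with a point of $B_0 \cap \SetBuilder{x}{\DualPairing{a_2}{x} \leq b_2}$ that violates $\DualPairing{a_1}{x} \leq b_1$ --- such a point exists precisely because $\xi_2 > b_1$ once \cref{step:check-set-inc} has failed. So the ``short separate argument'' is a genuine geometric step that must be supplied.

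For \cref{step:final}, the applicability of \cref{lm-uncon-min} (linear independence of $a_1, a_2$ and $\DualPairing{b}{(A\Adjoint H A)^{-1} b} < 1$) is exactly the part you leave open, and the missing idea is the same convex-combination construction: the failure of \cref{step:check-set-inc} and \cref{step:check-points} yields points of $B_0$ on each hyperplane strictly violating the other constraint, and combining these with the Slater point gives some $x \in \Interior B_0$ with $\DualPairing{a_1}{x} = b_1$ and $\DualPairing{a_2}{x} = b_2$. Linear independence then follows (otherwise the two hyperplanes would coincide, contradicting the strict violations), and $A\Adjoint x = b$ with $\RelativeNorm{x}{H^{-1}} < 1$ gives $\DualPairing{b}{(A\Adjoint H A)^{-1} b} \leq \RelativeNorm{x}{H^{-1}}^2 < 1$. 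Note also that the uniqueness appeals to \cref{lm-aux-op} in your positivity argument require $b_i < \RelativeDualNorm{a_i}{H^{-1}}$, which likewise must be extracted from the failure of the earlier branches rather than assumed.
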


\begin{proof}
  \ProofPart

  For each $i = 1, 2$ and $\tau \geq 0$, denote $L_i^- \DefinedEqual
  \SetBuilder{x \in \VectorSpace{E}}{\DualPairing{a_i}{x} \leq b_i}$, $L_i
  \DefinedEqual \SetBuilder{x \in \VectorSpace{E}}{\DualPairing{a_i}{x} = b_i}$,
  $L_i(\tau) \DefinedEqual L_i^-$, if $\tau = 0$, and $L_i(\tau) \DefinedEqual
  L_i$, if $\tau > 0$.

  \ProofPart

  From \cref{Slater,lm-aux-op}, it follows that \cref{step:comp-tau} is
  well-defined and, for each $i = 1, 2$, $\tau_i$ is a solution
  of~\eqref{aux-prob} with parameters $(s, a_i, b_i)$. Hence, by
  \cref{lm-opt-cond},
  \begin{equation}\label{opt-cond-sep}
    X(s - \tau_i a_i) \cap L_i(\tau_i) \neq \emptyset,
    \quad i = 1, 2.
  \end{equation}

  \ProofPart

  Consider \cref{step:check-set-inc}. Note that the condition $\xi_1 \leq b_2$
  is equivalent to $B_0 \cap L_1^- \subseteq L_2^-$ since $\xi_1 = \max_{x \in
  B_0 \cap L_1^-} \DualPairing{a_2}{x}$. If $B_0 \cap L_1^- \subseteq L_2^-$,
  then, by \cref{opt-cond-sep}, $X(s - \tau_1 a_1) \cap L_1(\tau_1) \cap L_2^- =
  X(s - \tau_1 a_1) \cap L_1(\tau_1) \neq \emptyset$, so, by \cref{lm-opt-cond},
  $(\tau_1, 0)$ is indeed a solution of \eqref{dual-prob}.
  
  Similarly, if $\xi_2 \leq b_1$, then $B_0 \cap L_2^- \subseteq L_1^-$ and $(0,
  \tau_2)$ is a solution of \eqref{dual-prob}.

  \ProofPart

  From now on, we can assume that $B_0 \cap L_1^- \cap \Interior L_2^+ \neq
  \emptyset$, $B_0 \cap L_2^- \cap \Interior L_1^+ \neq \emptyset$, where
  $\Interior L_i^+ \DefinedEqual \SetBuilder{x \in
  \VectorSpace{E}}{\DualPairing{a_i}{x} > b_i}$, $i = 1, 2$. Combining this with
  \cref{Slater}, we obtain\footnote{%
    Take an appropriate convex combination of two points from the specified
  nonempty convex sets.%
  }%
  \begin{equation}\label{sect-inter-halfsp}
    \Interior B_0 \cap L_1 \cap L_2^- \neq \emptyset,
    \qquad
    \Interior B_0 \cap L_2 \cap L_1^- \neq \emptyset.
  \end{equation}

  Suppose $\DualPairing{a_2}{H (s - \tau_1 a_1)} \leq b_2 \DualNorm{s - \tau_1
  a_1}$ at \cref{step:check-points}. 1) If $s \neq \tau_1 a_1$, then $X(s -
  \tau_1 a_1)$ is a singleton, $x(s - \tau_1 a_1) = H (s - \tau_1 a_1) /
  \DualNorm{s - \tau_1 a_1}$, so we obtain $x(s - \tau_1 a_1) \in L_2^-$.
  Combining this with \cref{opt-cond-sep}, we get $x(s - \tau_1 a_1) \in
  L_1(\tau_1) \cap L_2^-$. 2) If $s = \tau_1 a_1$, then $X(s - \tau_1 a_1) \cap
  L_1(\tau_1) \cap L_2^- = B_0 \cap L_1(\tau_1) \cap L_2^- \neq \emptyset$ in
  view of the first claim in \cref{sect-inter-halfsp} (recall that $L_1
  \subseteq L_1(\tau_1)$). Thus, in any case, $X(s - \tau a_1) \cap L_1(\tau_1)
  \cap L_2^- \neq \emptyset$, and so, by \cref{lm-opt-cond}, $(\tau_1, 0)$ is a
  solution of \eqref{dual-prob}.
    
  Similarly, one can consider the case when $\DualPairing{a_1}{H (s - \tau_2
  a_2)} \leq b_1 \DualNorm{s - \tau_2 a_2}$ at \cref{step:check-points}.

  \ProofPart

  Suppose we have reached \cref{step:final}. From now on, we can assume that
  \begin{equation}\label{check-points-failed}
    X(s - \tau_1 a_1) \cap L_1(\tau_1) \cap \Interior L_2^+ \neq \emptyset,
    \qquad
    X(s - \tau_2 a_2) \cap L_2(\tau_2) \cap \Interior L_1^+ \neq \emptyset.
  \end{equation}
  Indeed, since both conditions at \cref{step:check-points} have not been
  satisfied, $s \neq \tau_i a_i$, $i = 1, 2$, and $x(s - \tau_1 a_1) \notin
  L_2^-$, $x(s - \tau_2 a_2) \notin L_1^-$. Also, by \cref{opt-cond-sep}, $x(s -
  \tau_i a_i) \in L_i(\tau_i)$, $i = 1, 2$.

  Let $\mu \in \NonnegativeRay^2$ be any solution of \eqref{dual-prob}. By
  \cref{lm-opt-cond}, $X(s - A \mu) \cap L_1(\mu_1) \cap L_2(\mu_2) \neq
  \emptyset$. Note that we cannot have $\mu_2 = 0$. Indeed, otherwise, we get
  $X(s - \mu_1 a_1) \cap L_1(\mu_1) \cap L_2^- \neq \emptyset$, so $\mu_1$ must
  be a solution of \eqref{aux-prob} with parameters $(s, a_1, b_1)$. But, by
  \cref{lm-aux-op}, such a solution is unique (in view of the second claim in
  \cref{check-points-failed}, $\DualPairing{a_1}{x} > b_1$ for some $x \in B_0$,
  so $b_1 < \DualNorm{a_1}$). Hence, $\mu_1 = \tau_1$, and we obtain a
  contradiction with \cref{check-points-failed}. Similarly, we can show that
  $\mu_1 \neq 0$. Consequently, $\mu_1, \mu_2 > 0$, which means that $\mu$ is a
  solution of \eqref{aux-uncon}.
  
  Thus, at this point, any solution of \eqref{dual-prob} must be a solution of
  \eqref{aux-uncon}. In view of \cref{lm-uncon-min}, to finish the proof, it
  remains to show that $a_1$, $a_2$ are linearly independent and
  $\DualPairing{b}{(A\Adjoint H A)^{-1} b} < 1$. But this is simple. Indeed,
  from \cref{check-points-failed}, it follows that
  \begin{equation}\label{sect-inter-int-halfsp}
    \text{either} \quad
    B_0 \cap L_1 \cap \Interior L_2^+ \neq \emptyset
    \quad \text{or} \quad
    B_0 \cap L_2 \cap \Interior L_1^+ \neq \emptyset
  \end{equation}
  since $\tau_1$ and $\tau_2$ cannot both be equal to $0$. Combining
  \cref{sect-inter-int-halfsp,sect-inter-halfsp}, we see that $\Interior B_0
  \cap L_1 \cap L_2 \neq \emptyset$ and, in particular, $L_1 \cap L_2 \neq
  \emptyset$. Hence, $a_1$, $a_2$ are linearly independent (otherwise, $L_1 =
  L_2$, which contradicts \cref{sect-inter-int-halfsp}). Taking any $x \in
  \Interior B_0 \cap L_1 \cap L_2$, we obtain $\Norm{x} < 1$ and $A\Adjoint x =
  b$, hence $\DualPairing{b}{(A\Adjoint H A)^{-1} b} = \DualPairing{A\Adjoint
  x}{(A\Adjoint H A)^{-1} A\Adjoint x} \leq \Norm{x}^2 < 1$, where we have used
  $A (A\Adjoint H A)^{-1} A\Adjoint \preceq H^{-1}$.
\end{proof}

\printbibliography

@inproceedings{Levin1965,
  title={An algorithm for minimizing convex functions},
  author={Levin, Anatoly},
  booktitle={Doklady Akademii Nauk SSSR},
  volume={160},
  number={6},
  pages={1244--1247},
  year={1965},
  organization={Russian Academy of Sciences}
}

@article{Newman1965,
  title={Location of the maximum on unimodal surfaces},
  author={Newman, Donald},
  journal={Journal of the ACM (JACM)},
  volume={12},
  number={3},
  pages={395--398},
  year={1965},
  publisher={ACM New York, NY, USA}
}

@article{Auslender1973,
  title={R\'esolution num\'erique d'in\'egalit\'es variationnelles},
  author={Auslender, Alfred},
  journal={R.A.I.R.O.},
  year={1973},
  volume={7},
  number={2},
  pages={67--72}
}

@article{YudinNemirovski1976Book,
  title={Informational complexity and efficient methods for the solution of
  convex extremal problems},
  author={Yudin, David and Nemirovskii, Arkadi},
  journal={Matekon},
  volume={13},
  number={2},
  pages={22--45},
  year={1976}
}

@article{Shor1977CutOffMethod,
  title={Cut-off method with space extension in convex programming problems},
  author={Shor, Naum},
  journal={Cybernetics},
  volume={13},
  number={1},
  pages={94--96},
  year={1977},
  publisher={Springer}
}

@inproceedings{Khachiyan1979PolynomialLP,
  title={A polynomial algorithm in linear programming},
  author={Khachiyan, Leonid},
  booktitle={Soviet Math. Dokl},
  volume={244},
  number={5},
  pages={1093--1096},
  year={1979},
  organization={Russian Academy of Sciences}
}

@article{Grotschel1981CombinatorialOptimization,
  title={The ellipsoid method and its consequences in combinatorial
  optimization},
  author={Gr{\"o}tschel, Martin and Lov{\'a}sz, L{\'a}szl{\'o} and Schrijver,
  Alexander},
  journal={Combinatorica},
  volume={1},
  number={2},
  pages={169--197},
  year={1981},
  publisher={Springer}
}

@article{Bland1981EllipsoidSurvey,
  title={The ellipsoid method: A survey},
  author={Bland, Robert and Goldfarb, Donald and Todd, Michael},
  journal={Operations research},
  volume={29},
  number={6},
  pages={1039--1091},
  year={1981},
  publisher={INFORMS}
}

@article{Bulatov1982,
  title={Method of centers of orthogonal simplexes for solving convex
  programming problems},
  author={Bulatov, V and Shepot'ko, L},
  journal={Methods of Optimization and Their Application},
  year={1982}
}

@inproceedings{Nesterov1983,
  title={A method for solving the convex programming problem with convergence
  rate {$O(1/k^2)$}},
  author={Nesterov, Yurii},
  booktitle={Doklady Akademii Nauk SSSR},
  volume={269},
  pages={543--547},
  year={1983}
}

@inproceedings{TarasovKhachiyan1988,
  title={The method of inscribed ellipsoids},
  author={Tarasov, Serge and Khachiyan, Leonid and Erlikh, I},
  booktitle={Soviet Math. Dokl},
  volume={37},
  number={1},
  pages={226--230},
  year={1988}
}

@book{Nemirovski1995BookInfoCompl,
  title={Information-based complexity of convex programming},
  author={Nemirovski, Arkadi},
  year={1995},
  publisher={Lecture notes}
}

@article{Nemirovski2009Robust,
  title={Robust stochastic approximation approach to stochastic programming},
  author={Nemirovski, Arkadi and Juditsky, Anatoli and Lan, Guanghui and
  Shapiro, Alexander},
  journal={SIAM Journal on Optimization},
  volume={19},
  number={4},
  pages={1574--1609},
  year={2009},
  publisher={SIAM}
}

@article{Nesterov2009PrimalDual,
  title={Primal-dual subgradient methods for convex problems},
  author={Nesterov, Yurii},
  journal={Mathematical Programming},
  volume={120},
  number={1},
  pages={221--259},
  year={2009},
  publisher={Springer}
}

@article{Nemirovski2010AccCert,
  title={Accuracy certificates for computational problems with convex
  structure},
  author={Nemirovski, Arkadi and Onn, Shmuel and Rothblum, Uriel George},
  journal={Mathematics of Operations Research},
  volume={35},
  number={1},
  pages={52--78},
  year={2010},
  publisher={INFORMS}
}

@article{Duchi2011Adagrad,
  title={{Adaptive Subgradient Methods for Online Learning and Stochastic
  Optimization}},
  author={Duchi, John and Hazan, Elad and Singer, Yoram},
  journal={Journal of Machine Learning Research},
  volume={12},
  number={7},
  year={2011}
}

@article{Lan2012Optimal,
  title={An optimal method for stochastic composite optimization},
  author={Lan, Guanghui},
  journal={Mathematical Programming},
  volume={133},
  number={1},
  pages={365--397},
  year={2012},
  publisher={Springer}
}

@article{Bubeck2015Geometric,
  title={A geometric alternative to {Nesterov}'s accelerated gradient descent},
  author={Bubeck, S{\'e}bastien and Lee, Yin Tat and Singh, Mohit},
  journal={arXiv preprint arXiv:1506.08187},
  year={2015}
}

@inproceedings{Bubeck2016Politician,
  title={{Black-box Optimization with a Politician}},
  author={Bubeck, S{\'e}bastien and Lee, Yin Tat},
  booktitle={International Conference on Machine Learning},
  pages={1624--1631},
  year={2016},
  organization={PMLR}
}

@article{Dvurechensky2016Stochastic,
  title={{Stochastic Intermediate Gradient Method for Convex Problems with
  Stochastic Inexact Oracle}},
  author={Dvurechensky, Pavel and Gasnikov, Alexander},
  journal={Journal of Optimization Theory and Applications},
  volume={171},
  number={1},
  pages={121--145},
  year={2016},
  publisher={Springer}
}

@book{Nesterov2018Book,
  title={Lectures on convex optimization},
  author={Nesterov, Yurii},
  volume={137},
  year={2018},
  publisher={Springer}
}

@book{Lan2020Book,
  title={{First-Order and Stochastic Optimization Methods for Machine
  Learning}},
  author={Lan, Guanghui},
  year={2020},
  publisher={Springer}
}

@book{BenTalNemirovski2021BookModernCO,
  title={{Lectures on Modern Convex Optimization}},
  author={Ben-Tal, Aharon and Nemirovski, Arkadi},
  year={2021},
  publisher={Lecture notes}
}

\end{document}